\newenvironment{pic}[1][]%
{\begin{aligned}\begin{tikzpicture}[#1]}%
{\end{tikzpicture}\end{aligned}}
\tikzstyle{string}=[line width=1.25pt]
\tikzstyle{thickstring}=[line width=2.5pt]
\tikzstyle{dot}=[circle, draw=black, fill=gray, inner sep=.4ex, line width=1.25pt]
\tikzstyle{whitedot}=[circle, draw=black, fill=white, inner sep=.4ex, line width=1.25pt]
\tikzstyle{blackdot}=[circle, draw=black, fill=black, inner sep=.4ex, line width=1.25pt]
\tikzstyle{cross}=[preaction={draw=white, -, line width=6pt}]
\tikzstyle{coupon}=[rectangle,draw=black,minimum size=10pt,line width=1.25pt]
\tikzstyle{unit}=[triangle,line width=1.25pt,fill=white,draw,inner sep=1pt,minimum width=1.1cm]
\tikzstyle{counit}=[triangle,hflip,line width=1.25pt,fill=white,draw,inner sep=1pt,minimum width=1.1cm]
\tikzstyle{iso2cell}=[iso,line width=1.25pt,fill=white,draw,inner sep=1pt,minimum width=1.1cm]
\tikzstyle{pullback2cell}=[iso,pullback,line width=1.25pt,fill=white,draw,inner sep=1pt,minimum width=1.1cm]
\tikzstyle{2cell}=[rectangle,line width=1.25pt,fill=white,draw,inner sep=1pt,minimum size=10pt,minimum width=1.1cm]
\tikzset{arrow/.style={decoration={
    markings,
    mark=at position #1 with \arrow{>}},
    postaction=decorate}
}
\tikzset{reverse arrow/.style={decoration={
    markings,
    mark=at position #1 with \arrow{<}},
    postaction=decorate}
}
\newif\ifpullback
\newif\ifhflip
     \saveddimen{\halfbaselength}{%
         \pgf@x=0.5\wd\pgfnodeparttextbox
         \pgfmathsetlength\pgf@xc{\pgfkeysvalueof{/pgf/inner xsep}}%
         \advance\pgf@x by \pgf@xc%
         \advance\pgf@x by \ht\pgfnodeparttextbox
         \pgfmathsetlength\pgf@xb{\pgfkeysvalueof{/pgf/minimum width}}%
         \divide\pgf@xb by 2
         \ifdim\pgf@x<\pgf@xb%
             \pgf@x=\pgf@xb%
         \fi%
     }
     \saveddimen\triangleheight{%
         \pgfmathsetlength\pgf@xc{\pgfkeysvalueof{/pgf/inner ysep}}%
         \advance\pgf@x by \pgf@xc%
         \pgfmathsetlength\pgf@xb{\pgfkeysvalueof{/pgf/minimum height}}%
         \divide\pgf@xb by 2
         \ifdim\pgf@x<\pgf@xb%
             \pgf@x=\pgf@xb%
         \fi%
     }
     \savedanchor\centerpoint{
         \pgf@x=0pt 
         \pgf@y=0pt
     }
		\edef\externalx{\the\pgf@x}%
		\edef\externaly{\the\pgf@y}%
		\edef\externalx{\the\pgf@xa}%
		\edef\externaly{\the\pgf@ya}%
		\pgfmathanglebetweenpoints{\centerpoint}{\pgfqpoint{\externalx}{\externaly}}%
                \pgfmathsubtract@{\pgfmathresult}{\rotate}%
			\pgfmathadd@{\pgfmathresult}{360}%
		\let\externalangle\pgfmathresult%
		\pgfmathdivide@{\externalangle}{50}
\newtheorem{theorem}{Theorem}
\newtheorem{corollary}[theorem]{Corollary}
\newtheorem{definition}[theorem]{Definition}
\newtheorem{lemma}[theorem]{Lemma}
\newtheorem{proposition}[theorem]{Proposition}
\def\catC{\mathcal{C}}
\def\catD{\mathcal{D}}
\def\Span{\mathcal{SPN}}
\def\Loc{\mathcal{L}\mathrm{oc}}
\def\arrC{{\mathcal{C}^\downarrow}}
\def\dom{\mathop{\mathsf{dom}}}   
\def\cod{\mathop{\mathsf{cod}}}   
\newcommand{\cc}{\mathop{\mathsf{cc}}}
\newcommand{\dzeroK}{d_0}   
\def\Alg{\text{-}\mathrm{Alg}}
\def\monD{\mathfrak{D}}
\def\monL{\mathfrak{L}}
\def\monR{\mathfrak{R}}
\newcommand{\funK}{\mathfrak{K}}
\newcommand{\funI}{\mathfrak{I}}
\newcommand{\psin}[1]{\psi_{#1}}    
\def\catLoc{\mathop{\mathbf{Loc}}}
\def\val{\mathfrak{V}}
\def\valp{\mathfrak{V}^{(1)}}
\def\funP{\mathfrak{P}}
\def\funPP{\mathbb{P}}
\def\funT{\mathfrak{T}}
\def\Id{\operatorname{I\kern-0.2ex d}}
\newcommand{\upstairs}[1]{\overline{#1}}    
\newcommand{\downstairs}[1]{\underline{#1}} 
\newcommand{\monLof}[1]{{\monL_{\downstairs{#1}}\upstairs{#1}}}
\newcommand{\stairs}[1]{{#1}\mathord{\downarrow}}
\begin{document}
\title[]
{Geometric constructions preserve fibrations}
%
\author{Bertfried Fauser}
\author{Steven Vickers}
\address{%
School Of Computer Science\\
The University of Birmingham\\
Edgbaston, Birmingham, B25 2TT}
\email{[b.fauser|s.j.vickers]@cs.bham.ac.uk}
%
\subjclass[2000]{Primary 18D30. 
               Secondary 18D05, 
                         81P10} 
\keywords{Fibrations, opfibrations, geometricity, locales, tangles, valuation monad}

\date{\today}

\begin{abstract}
  Let $\catC$ be a representable 2-category,
  and $\funT_\bullet$ a 2-endofunctor of the arrow 2-category $\arrC$ such that
  (i) $\cod\funT_\bullet = \cod$ and
  (ii) $\funT_\bullet$ preserves proneness of morphisms in $\arrC$.
  Then $\funT_\bullet$ preserves fibrations and opfibrations in $\catC$.

  The proof takes Street's characterization of (e.g.) opfibrations as pseudoalgebras
  for 2-monads $\monL_B$ on slice categories $\catC/B$
  and develops it by defining a 2-monad $\monL_\bullet$ on $\arrC$
  that takes change of base into account, and uses known results on the lifting
  of 2-functors to pseudoalgebras.
\end{abstract}

\maketitle
\section{Introduction}\label{sec:introduction}

The results reported here arose out of an analysis \cite{FRV:Born} of existing topos approaches to quantum foundations as either fibrational or opfibrational. There the toposes and their spectral gadgets are interpreted as point-free bundles $p\colon \Sigma\rightarrow B$,
in other words maps in the category $\mathbf{Loc}$ of locales.
Points of the base space $B$ are \emph{contexts}, or classical perspectives on a quantum system,
and each fibre is a classical state space for its context.
Instances of the specialization order in $B$ (representing context refinement) induce
maps between the corresponding fibres,
in either a covariant (opfibrational) or contravariant (fibrational) way.
Moreover, the fibre maps have a canonical property that makes them either opfibrations
or fibrations in the 2-categorical sense of \cite{Street},
when one considers $\mathbf{Loc}$ as a 2-category,
with its order enrichment using the specialization order.

Our techniques in various places make use of features of the so-called ``geometric reasoning'',
and in particular the way it allows a bundle such as $p$ to be defined geometrically
as a transformation of points of $B$ into point-free spaces, the fibres.
Then a typical point $y$ of $\Sigma$ can be expressed as a pair $(x,y')$
where $x$ is a point of $B$ and $y'$ is a point of the fibre over it.
This is brought out in some detail in~\cite{SVW:GelfandSGTGM}.
From this it is immediately evident what is the specialization order within each fibre,
but it is harder to identify the general specialization across fibres in $\Sigma$.

At this point it is very helpful to know if $p$ is a fibration or an opfibration.
Suppose, for instance, it is known that $p$ is an opfibration.
We want to know when $(x_1,y'_1)\sqsubseteq (x_2,y'_2)$.
Certainly we must have $x_1 \sqsubseteq x_2$, since $p$ must, like any continuous map,
preserve specialization.
Hence there is a fibre map $r_{x_1 x_2}\colon \Sigma_{x_1}\rightarrow \Sigma_{x_2}$.
It then turns out
that $(x_1,y'_1)\sqsubseteq (x_2,y'_2)$ iff
$r_{x_1 x_2}(y'_1)\sqsubseteq y'_2$.

For the spectral bundles these facts arose from rather deep topological facts
\cite{Jo:FibPP}:
in the opfibrational case $p$ was a local homeomorphism,
and in the fibrational case $p$ corresponds to an internal
compact regular locale in the topos of sheaves over $B$.
There are some bundle constructions that do not preserve those topological properties but
for which it is still going to be interesting to know whether they result in
fibrations or opfibrations.
An example is the valuation locale
-- see~\cite{CoqSpit:IntVal} for an account of how it relates to the quantum discussion.

The fundamental result that we shall prove is that \emph{any} geometric construction
preserves both opfibrations and fibrations.

In locale theory, the definition of ``geometric construction'' (of locales) is as follows.
\begin{enumerate}
\item
  Suppose we have an endofunctor $T$ of $\mathbf{Loc}$.
  On the face of it, it is given as an endofunctor of the category $\mathbf{Fr}$
  of frames, although we shall move away from that view.
\item
  Suppose also that the definition of $T$ can be applied to internal frames in any
  topos.
  Since~\cite{JoyalTier} internal frames in the topos of sheaves over a locale $B$
  are dual to bundles over $B$,
  it follows that $T$ induces an endofunctor $T_B$ on the slice $\mathbf{Loc}/B$.
\item
  Geometricity is the property that the construction is stable
  (up to isomorphism) under change of base,
  i.e. pullback $f^{\ast} \colon \mathbf{Loc}/B \to \mathbf{Loc}/B'$
  along a map $f \colon B' \to B$.
  This is often proved by showing that $T$ can be described by a construction on
  presentations of frames.
  See, e.g.,~\cite{PPExp}, or~\cite{Vickers:Riesz} for an application to the valuation locale.
\end{enumerate}

These can be expressed in the language of indexed category theory
(see, e.g., \cite[B1]{Elephant1}):
the slice category $\mathbf{Loc}/B$, for a variable locale $B$,
is ``indexed over $\mathbf{Loc}$'',
and that notion includes the reindexing functors $f^{\ast}$.
Moreover, the geometricity of $T$ then amounts to it being an indexed endofunctor
of the indexed category --
this notion includes the $T_B$s' commuting with reindexing up to coherent isomorphisms.

Although this captures the intuitions, the overall structure is complicated
and we shall find it convenient to use the alternative fibrational structure
(see, again, \cite[B1]{Elephant1}).
For this, all the slices are bundled together into a single category,
the arrow category $\mathbf{Loc}^{\downarrow}$,%
\footnote{This is usually written $\mathbf{Loc}^{\rightarrow}$,
  but we use a down arrow to reinforce an idea that the objects are bundles.}
and then the endofunctors $T$ and $T_B$ as given by a single endofunctor
$T_{\bullet}$ on $\mathbf{Loc}^{\downarrow}$.
Indeed, it seems that we have to go to the fibrational setting,
since we also need to use slicewise endofunctors (some monads defined by Street)
that are not indexed.

It is geometricity that allows the $T_B$s to take in change of base.
We can factor an arbitrary bundle morphism via a pullback square --
this is the ``vertical-prone'' factorization for the codomain fibration
$\cod\colon \mathbf{Loc}^{\downarrow}\to \mathbf{Loc}$.
\begin{equation}\label{eq:bundleMorph}
  \xymatrix{
    E \ar[d]^p \ar[r]^g & E' \ar[d]^{p'} \\
    B               \ar[r]^f & B'
  }
  \quad
  \xymatrix{
    E \ar[dr]^p \ar[r]
    & f^{\ast}E' \ar[d] \ar[r]
    & E' \ar[d]^{p'} \\
    & B \ar[r]^f & B'
  }
  \text{.}
\end{equation}
Then geometricity gives us an isomorphism in
\[
  \xymatrix{
    T_{B}E \ar[dr] \ar[r]
    & T_{B} f^{\ast}E' \ar[d] \ar[r]^{\cong}
    & f^{\ast}T_{B'} E' \ar[dl] \ar[r]
    & T_{B'}E' \ar[d]^{p'} \\
    & B  \ar[rr]^f & & B'
  }
  \text{,}
\]
thus enabling us to define
$T_{f}g \colon T_{B}E \to T_{B'}E'$.

The only extra condition needed on $T_{\bullet}$ is that if a bundle morphism
as on the left-hand side of~\eqref{eq:bundleMorph}
happens to be a pullback square, then so is the result of applying $T_{\bullet}$ to it:
\[
  \xymatrix{
    T_{B}(E) \ar[d]^{T_{B}(p)} \ar[r]^{T_{f}(g)} & T_{B'}(E') \ar[d]^{T_{B'}(p')} \\
    B               \ar[r]^f & B'
  }
\]

In~\cite{TownsendVickers:strengthofpowerlocale} are given sufficient conditions for
an endofunctor on $\mathbf{Loc}$ (or, more generally, any cartesian category $\catC$)
to be part of an indexed endofunctor on slices.
This covers coherence of the geometricity isomorphisms, or, equivalently,
the strict functoriality of $T_\bullet$.
The conditions are verified there for the powerlocales,
and in \cite{Vickers:Riesz} for valuation locales.

With this formulation of geometricity,
we are able to generalize from $\mathbf{Loc}$ to a very general class of 2-categories,
and prove (Theorem~\ref{thm:main}) that if $T_{\bullet}$ is geometric,
then for each base object $B$, $T_B$ preserves the properties of being fibration or
opfibration over $B$.

Note that we shall be using fibrations and opfibrations in two different settings.
In a given 2-category $\catC$ (generalizing $\mathbf{Loc}$), with suitable limits,
we are interested in when a morphism in it -- an object of $\arrC$ --
is a fibration or an opfibration.
On the other hand, we also use extensively the fact that,
if we forget the 2-cells in $\catC$, then the codomain functor
$\cod \colon \arrC \to \catC$ is a bifibration -- both a fibration and an opfibration --
in the 2-category of categories.

We use some results from Street~\cite{Street}.
There, for each base object $B$, a 2-monad $\monL_B$ is defined on $\catC/B$ which characterizes
the opfibrations as those bundles $p\colon E \rightarrow B$ that support the structure of a
normalized $\monL_B$-pseudoalgebra.
There is also a dual result (reversing 2-cells) that the fibrations are the normalized pseudoalgebras
for a 2-monad $\monR_B$.
Hence to prove that $T_B$ preserves opfibrations it is natural to prove that it lifts
to the pseudoalgebra category of $\monL_B$.
From the 1-categorical theory one would expect this to be equivalent to defining
a natural transformation that, with $T_B$, makes a monad opfunctor,
and the 2-categorical issues have already been taken care of in~\cite{MarmoWood:CohPsLW}.
However, we also find that we must extend Street's technique slightly by defining
a 2-monad $\monL_{\bullet}$ on the whole of $\mathcal{C}^{\downarrow}$.
We then show that $T_{\bullet}$ lifts to the pseudoalgebra category.

\section{Background}\label{sec:background}
\subsection{Some results from Street and 2-tangle notation}\label{sec:street-tangle}

We now recall some definitions and results from~\cite{Street}.
However, we shall also replace Street's diagrams with a dual ``2-tangle''
notation,
which seems to be more compact for handling 2-categories and
lax or pseudoalgebra structures.

  We use the following different interpretations for tangles to work
with 2-categories. In a 2-category we have 0-cells (objects), 1-cells (arrows)
and 2-cells, which, in the canonical example of $\mathbf{Cat}$ are
categories, functors and natural transformations.
In our intended application of $\mathbf{Loc}$ they are locales, continuous maps,
and instances of the specialization order.
In an ordinary diagram an $n$-cell is
represented by an $n$-dimensional object, hence 0-cell = vertex,
1-cell = (oriented) edge, 2-cell = (oriented) area connecting paths of
edges.
Tangles invert this association so that 0-cells are areas,
1-cells are vertical edges, with right-to-left reading order \emph{across} the edges,
and 2-cells are ``coupons'', with downward reading order from the edges attached at
the top to those at the bottom.
We introduce a special coupon for trivial (equality) 2-cells,
representing commutative diagrams.
By way of example we provide here an introduction to this
notion. A formal way to use these diagrams is given
in~\cite{Mellies:FunctorialBoxes}.

 Commutative diagrams are made out of $n$-gons, and we discuss how 2-cells
for bi-, tri- and tetra-gons are described. First we look at a 2-cell
$\alpha$ of a bigon (parallel pair of arrows $f,g$ from $A$ to $B$).
The vertices $A,B$ become areas in the tangle picture, which we shade here for clarity.
Later we will drop these area labels when they are reconstructible from the
tangle.
\begin{align}
  \xymatrix{
    A
    \ar@/^1pc/@{->}[r]^{f}
    \ar@/_1pc/@{->}[r]_{g}
    &
    B
    {\ar@{=>}^{\scriptstyle \alpha} (7,3); (7,-3)}
  }
  \simeq
  \begin{pic}
     \begin{pgfonlayer}{background}
        \draw[fill,color=blue!30] (-0.2,-0.5) rectangle (0.5,0.5);
        \draw[fill,color=green!30] (0.5,-0.5) rectangle (1.2,0.5);
     \end{pgfonlayer}
     \node (i) at (0.5,1) {$f$};
     \node (o) at (0.5,-1) {$g$};
     \node[fill=white,circle,draw,line width= 1.25pt,inner sep=1pt] (m) at (0.5,0) {$\alpha$};
     \node at (1,0) {$A$};
     \node at (0,0) {$B$};
     \draw (i) -- (m);
     \draw (m) -- (o);
     \draw[arrows=->,double distance=1.75pt,>=stealth]
           (1.5,0.7) -- node[xshift=3mm,rotate=90] {\tiny 2-cells} (1.5,-0.7);
     \draw[arrows=->,>=stealth]
           (1,-1.25) -- node[below]{\tiny 1-cells}(0,-1.25);
  \end{pic}
  \qquad
  \vcenter
  {\vbox{\hsize=0.5\textwidth
        \noindent Correspondence between a bigon diagram and a tangle.}}
\end{align}
For trigons and tetragons we find:
\begin{align}
  \vcenter{\hsize=2cm
     \xymatrix{
   A
     \ar@{->}[d]_{f}
     \ar@{->}[dr]^{h}
    &
   \\
   B
     \ar@{->}[r]_{g}
    &
   C
   {\ar@{=>}^<<{\scriptstyle \alpha} (6,-7); (2,-11)}
  }}\simeq
  \begin{pic}
     \begin{pgfonlayer}{background}
        \draw[fill,color=blue!30] (-0.5,-0.3) rectangle (0.5,0.6);
        \draw[fill,color=blue!30] (-0.5,-0.7) rectangle (0  ,-0.3);
        \draw[fill,color=green!30] (0.5,-0.3) rectangle (1.5,0.6);
        \draw[fill,color=green!30] (1  ,-0.7) rectangle (1.5,-0.3);
        \draw[fill,color=red!30]   (0  ,-0.7) rectangle (1  ,-0.3);
     \end{pgfonlayer}
     \node[unit] (t) at (0.5,-0.3){$\alpha$};
	 \node (i) at (0.5,1) {$h$};
	 \node (o1) at (0,-1.2) {$g$};
	 \node (o2) at (1,-1.2) {$f$};
	 \draw (t.tip) --(i);
	 \draw (o1 |- t.center) -- (o1);
	 \draw (o2 |- t.center) -- (o2);
	 \node at (1.3,0.4) {$A$};
	 \node at (0.5,-0.5) {$B$};
	 \node at (-0.3,0.4) {$C$};
  \end{pic}
  \qquad\qquad
  \vcenter{\hsize=2cm
     \xymatrix{
   A
     \ar@{->}[r]^{f^\prime}
     \ar@{->}[d]_{f}
     &
   B
     \ar@{->}[d]^{g^\prime}
   \\
   C
     \ar@{->}[r]_{g}
     &
   D
   {\ar@{=>}^{\scriptstyle \alpha} (5,-11); (12,-5)}
  }}
  \simeq
  \begin{pic}
     \begin{pgfonlayer}{background}
        \draw[fill,color=blue!30] (-0.5,-0.8) rectangle (0,0.8);
        \draw[fill,color=red!30] (0,0.2) rectangle (1,0.8);
        \draw[fill,color=red!30] (0,-0.8) rectangle (1,-0.2);
        \draw[fill,color=green!30] (1,-0.8) rectangle (1.5,0.8);
     \end{pgfonlayer}
       \node (i1) at (0,1.2) {$g$};
       \node (i2) at (1,1.2) {$f$};
       \node[2cell] (t) at (0.5,0) {$\alpha$};
       \node (o1) at (0,-1.2) {$g^\prime$};
       \node (o2) at (1,-1.2) {$f^\prime$};
       \draw (i1) -- (i1 |- t.north);
       \draw (i2) -- (i2 |- t.north);
       \draw (o1) -- (o1 |- t.south);
       \draw (o2) -- (o2 |- t.south);
       \node at (-0.3,0) {$D$};
       \node at ( 1.3,0) {$A$};
       \node at ( 0.5,0.5) {$C$};
       \node at ( 0.5,-0.5) {$B$};
  \end{pic}
\end{align}
(In practice all our coupons are drawn as rectangles, however many inputs or outputs they have.)

The coupons for identity 2-cells, for ordinary commutative diagrams,
are depicted as double lines representing an equality symbol.
\begin{align}
  \vcenter{\hsize=2cm
     \xymatrix{
   A
     \ar@{->}[r]^{f^\prime}
     \ar@{->}[d]_{f}
     &
   B
     \ar@{->}[d]^{g^\prime}
   \\
   C
     \ar@{->}[r]_{g}
     &
   D
  }}
  \simeq
  \begin{pic}
     \begin{pgfonlayer}{background}
        \draw[fill,color=blue!30] (-0.5,-0.8) rectangle (0,0.8);
        \draw[fill,color=red!30] (0,0.1) rectangle (1,0.8);
        \draw[fill,color=red!30] (0,-0.8) rectangle (1,-0.1);
        \draw[fill,color=green!30] (1,-0.8) rectangle (1.5,0.8);
     \end{pgfonlayer}
       \node (i1) at (0,1.2) {$g$};
       \node (i2) at (1,1.2) {$f$};
       \node[iso2cell] (t) at (0.5,0) {};
       \node (o1) at (0,-1.2) {$g^\prime$};
       \node (o2) at (1,-1.2) {$f^\prime$};
       \draw (i1) -- (i1 |- t.north);
       \draw (i2) -- (i2 |- t.north);
       \draw (o1) -- (o1 |- t.south);
       \draw (o2) -- (o2 |- t.south);
       \node at (-0.3,0) {$D$};
       \node at ( 1.3,0) {$A$};
       \node at ( 0.5,0.5) {$C$};
       \node at ( 0.5,-0.5) {$B$};
  \end{pic}
  \qquad;\qquad
  \vcenter{\hsize=2cm
     \xymatrix{
   A
     \ar@{->}[d]_{f}
     \ar@{->}[dr]^{h}
    &
   \\
   B
     \ar@{->}[r]_{g}
    &
   C
  }}\simeq
  \begin{pic}
     \begin{pgfonlayer}{background}
        \draw[fill,color=blue!30] (-0.5,-0.0) rectangle (0.5,0.6);
        \draw[fill,color=blue!30] (-0.5,-0.7) rectangle (0  ,-0.0);
        \draw[fill,color=green!30] (0.5,-0.0) rectangle (1.5,0.6);
        \draw[fill,color=green!30] (1  ,-0.7) rectangle (1.5,-0.0);
        \draw[fill,color=red!30]   (0  ,-0.7) rectangle (1  ,-0.2);
     \end{pgfonlayer}
     \node[iso2cell] (t) at (0.5,-0.1) {};
	 \node (i) at (0.5,1) {$h$};
	 \node (o1) at (0,-1.2) {$g$};
	 \node (o2) at (1,-1.2) {$f$};
	 \draw (i |- t.north) --(i);
	 \draw (o1 |- t.south) -- (o1);
	 \draw (o2 |- t.south) -- (o2);
	 \node at (1.3,0.4) {$A$};
	 \node at (0.5,-0.5) {$B$};
	 \node at (-0.3,0.4) {$C$};
  \end{pic}
\end{align}
A general commutative $n$-gon may
have $i$ input and $j$ output lines (with $i+j=n$).
Such an $n$-gon may need further manipulation of its internal structure,
which we lose (on purpose, and to compactify) in our notation.
Furthermore, we can merge,
but have to be careful when disconnecting, such diagrams.
We drop from here onwards the shading and labelling of the 0-cells.
\begin{align}
   \begin{pic}
      \node (i1) at (0,1) {$f_1$};
      \node (i2) at (0.5,1) {$f_2$};
      \node (i3) at (2,1) {$f_i$};
      \node (o1) at (0,-1) {$g_1$};
      \node (o2) at (1.4,-1) {$g_{j-1}$};
      \node (o3) at (2,-1) {$g_j$};
      \node[iso2cell,minimum width=2.2cm] (t) at (1,0) {};
      \draw (i1) -- (i1 |- t.north);
      \draw (i2) -- (i2 |- t.north);
      \draw (i3) -- (i3 |- t.north);
      \draw (o1) -- (o1 |- t.south);
      \draw (o2) -- (o2 |- t.south);
      \draw (o3) -- (o3 |- t.south);
      \node at (1.1,0.5) {$\ldots$};
      \node at (0.7,-0.5) {$\ldots$};
   \end{pic}
   ;\qquad
   \begin{pic}[auto]
      \node (i1) at (0,1) {$f_1$};
      \node (i2) at (1,1) {$f_2$};
      \node (i3) at (2,1) {$f_3$};
      \node (o1) at (0,-1) {$g_1$};
      \node (o2) at (1,-1) {$g_2$};
      \node (o3) at (2,-1) {$g_3$};
      \node[iso2cell] (eq1) at (0.5,0.3) {};
      \node[iso2cell] (eq2) at (1.5,-0.3) {};
      \draw (i1) -- (i1 |- eq1.north);
      \draw (i2) -- (i2 |- eq1.north);
      \draw (i3) -- (i3 |- eq2.north);
      \draw (o1) -- (o1 |- eq1.south);
      \draw (o2) -- (o2 |- eq2.south);
      \draw (o3) -- (o3 |- eq2.south);
      \draw (i2 |- eq1.south) to node {\tiny $h$} (i2 |- eq2.north);
   \end{pic}
   \stackrel{\textrm{merge}}{\Rightarrow}
   \begin{pic}[auto]
      \node (i1) at (0,1) {$f_1$};
      \node (i2) at (1,1) {$f_2$};
      \node (i3) at (2,1) {$f_3$};
      \node (o1) at (0,-1) {$g_1$};
      \node (o2) at (1,-1) {$g_2$};
      \node (o3) at (2,-1) {$g_3$};
      \node[iso2cell,minimum width=2.1cm] (eq1) at (1,0) {};
      \draw (i1) -- (i1 |- eq1.north);
      \draw (i2) -- (i2 |- eq1.north);
      \draw (i3) -- (i3 |- eq1.north);
      \draw (o1) -- (o1 |- eq1.south);
      \draw (o2) -- (o2 |- eq1.south);
      \draw (o3) -- (o3 |- eq1.south);
   \end{pic}
\end{align}
A collection of identities with one input $f$ and one output $f$ is
equivalent to a trivial identity 2-cell.
\begin{align}
   \begin{pic}
      \node (i) at (2,1) {$f$};
      \node (o) at (0,-1) {$f$};
      \node (m) at (1,0) {};
      \node[iso2cell] (eq1) at (0.5,0.75) {};
      \node[iso2cell] (eq2) at (1.5,0.25) {};
      \node[iso2cell,minimum width=2.1cm] (eq3) at (1,-0.25) {};
      \draw (i) -- (i |- eq2.north);
      \draw (o) -- (o |- eq3.south);
      \draw (o |- eq1.south) -- (o |- eq3.north);
      \draw (m |- eq1.south) -- (m |- eq2.north);
      \draw (m |- eq2.south) -- (m |- eq3.north);
      \draw (i |- eq2.south) -- (i |- eq3.north);
   \end{pic}
   \stackrel{\textrm{merge}}{\Rightarrow}
   \begin{pic}
      \node (i) at (1,1) {$f$};
      \node (o) at (0,-1) {$f$};
      \node[iso2cell] (eq) at (0.5,0) {};
      \draw (i) -- (i |- eq.north);
      \draw (o) -- (o |- eq.south);
   \end{pic}
   =
  \begin{pic}
      \node (i) at (0,1) {$f$};
      \node[coupon] (m) at (0,0) {$1$};
      \node (o) at (0,-1) {$f$};
      \draw (i) -- (m);
      \draw (m) -- (o);
  \end{pic}
   =
  \begin{pic}
      \node (i) at (0,1) {$f$};
      \node (o) at (0,-1) {$f$};
      \draw (i) -- (o);
  \end{pic}
  \text{;}\,\,
  \begin{pic}
      \node (i1) at (0,1) {$g$};
      \node (i2) at (1,1) {$f$};
      \node (o1) at (0,-1) {$g$};
      \node (o2) at (1,-1) {$f$};
      \node[iso2cell] (eq1) at (0.5,0.3) {};
      \node[iso2cell] (eq2) at (0.5,-0.3) {};
      \draw (i1) -- (i1 |- eq1.north);
      \draw (i2) -- (i2 |- eq1.north);
      \draw (o1) -- (o1 |- eq2.south);
      \draw (o2) -- (o2 |- eq2.south);
  \end{pic}
  =
\begin{pic}
      \node (i1) at (0,1) {$g$};
      \node (i2) at (1,1) {$f$};
      \node (o1) at (0,-1) {$g$};
      \node (o2) at (1,-1) {$f$};
      \draw (i1) -- (o1);
      \draw (i2) -- (o2);
  \end{pic}
  \text{;}\,\,
\begin{pic}[auto]
      \node (i1) at (0,1) {};
      \node (i2) at (1,1) {};
      \node (o1) at (0,-1) {};
      \node (o2) at (1,-1) {};
      \node[iso2cell] (eq1) at (0.5,0.4) {};
      \node[iso2cell] (eq2) at (0.5,-0.4) {};
      \draw (i1 |- eq1.south) to node[swap] {$g$} (i1 |- eq2.north);
      \draw (i2 |- eq1.south) to node[swap] {$f$} (i2 |- eq2.north);
  \end{pic}
  =
  \quad
\end{align}
  The last two equalities are due to the invertibility of the identity
2-cell, when one edge of the commutative $n$-gon (here a trigon) is
identity. A similar equation holds for non-trivial 2-cells (e.g. triangles)
iff they are invertible.
The right-hand side of the final equation is an identity 2-cell on an identity 1-cell.
This is a void diagram and vanishes.

\medskip

  We continue to present some definitions and results from
Street~\cite{Street} in 2-tangle notation. Let $\catC$ be a category,
$A,B$ be objects in $\catC$. The category $\Span(A,B)$ has as objects
spans $(u_0,S,u_1)$ from $A$ to $B$ and arrows are arrows of spans
$f \colon (u_0,S,u_1) \rightarrow (v_0,S^\prime,v_1)$
\begin{align}\label{eq:span-arrow}
  \vcenter{\hsize=3cm\xymatrix{
    &
     S
     \ar@{->}[dl]_{u_0}
     \ar@{->}[dr]^{u_1}
    \\
     A
    & &
     B
  }}
  &
  ;\qquad
  \vcenter{\hsize=4cm\xymatrix{
    &
     S
     \ar@{->}[dl]_{u_0}
     \ar@{->}[dr]^{u_1}
     \ar@{->}[dd]^{f}
    \\
     A
    & &
     B
    \\
    &
     S^\prime
     \ar@{->}[ul]^{v_0}
     \ar@{->}[ur]_{v_1}
  }}
  \Leftrightarrow
   \begin{pic}
      \node (i1) at (0,1) {$v_0$};
      \node (i2) at (1,1) {$f$};
      \node (o) at (0.5,-1) {$u_0$};
      \node[iso2cell,minimum width=1.1cm] (eq) at (0.5,0) {};
      \draw (i1) -- (i1 |- eq.north);
      \draw (i2) -- (i2 |- eq.north);
      \draw (o |- eq.south) -- (o);
   \end{pic}
   \textrm{~and~}
   \begin{pic}
      \node (i1) at (0,1) {$v_1$};
      \node (i2) at (1,1) {$f$};
      \node (o) at (0.5,-1) {$u_1$};
      \node[iso2cell,minimum width=1.1cm] (eq) at (0.5,0) {};
      \draw (i1) -- (i1 |- eq.north);
      \draw (i2) -- (i2 |- eq.north);
      \draw (o |- eq.south) -- (o);
   \end{pic}.
\end{align}
For a span $S$, the \emph{reverse span} $S^*$ form $B$ to $A$ is given
by $(u_1,S,u_0)$. If $\catC$ has pullbacks, then a span $(u_0,S,u_1)$
from $A$ to $B$ and a span $(v_0,T,v_1)$ from $B$ to $C$ has a composite
span $(u_0\hat{v}_0,T\circ S,v_1\hat{u_1})$ from $A$ to $C$ defined as
\begin{align}
  \vcenter{\hsize=4cm\xymatrix{
    & &
     T\circ S
     {\ar@{-}(28,-6); (31,-9)
      \ar@{-}(31,-9); (34,-6)}
     \ar@{->}[dl]_{\hat{v}_0}
     \ar@{->}[dr]^{\hat{u}_1}
    \\
    &
     S
     \ar@{->}[dl]_{u_0}
     \ar@{->}[dr]^{u_1}
    & &
     T
     \ar@{->}[dl]_{v_0}
     \ar@{->}[dr]^{v_1}
    \\
     A
    & &
     B
    & &
     C
  }}
\end{align}
where, following Street, we decorate pulled back arrows with hats.
Similarly, given composable spans and arrows $f,g$ of such spans
then the arrow $g\circ f \colon T\circ S \rightarrow T^\prime\circ S^\prime$
is induced on pullbacks is an arrow of the composite spans.
An \emph{opspan} from $A$ to $B$ in $\catC$ is a span from $A$ to $B$
in $\catC^{op}$, but arrows of opspans are arrows from $\catC$.
\begin{definition}\label{def:commaObject}
A \emph{comma object} for the opspan $(r,D,s)$ from $A$ to $B$ is a
span $(d_0,r/s,d_1)$ from $A$ to $B$ together with a 2-cell
\begin{align}
  \vcenter{\hsize=3cm\xymatrix{
     r/s
     \ar@{->}[d]_{d_0}
     \ar@{->}[r]^{d_1}
    &
     B
     \ar@{->}[d]^{s}
    \\
     A
     \ar@{->}[r]^{r}
    &
     D
     {\ar@{=>}^{\scriptstyle \lambda} (4,-8); (12,-8)}
  }}
  &\simeq
  \begin{pic}
       \node (i1) at (0,1) {$r$};
       \node (i2) at (1,1) {$d_0$};
       \node[2cell,inner sep=2pt] (t) at (0.5,0) {$\lambda$};
       \node (o1) at (0,-1) {$s$};
       \node (o2) at (1,-1) {$d_1$};
       \draw (i1) -- (i1 |- t.north);
       \draw (i2) -- (i2 |- t.north);
       \draw (o1) -- (o1 |- t.south);
       \draw (o2) -- (o2 |- t.south);
  \end{pic}
\end{align}
satisfying the two following universality conditions.
\begin{itemize}
\item For any span $(u_0,S,u_1)$ from $A$ to $B$, composition with
$\lambda$ yields a bijection between arrows of spans
$f$~\eqref{eq:span-arrow} and 2-cells $\sigma$ given by (either one of)
\begin{align}\label{eq:comma1}
   \begin{pic}[auto]
       \node (i1) at (0,1) {$r$};
       \node (i2) at (1.5,1) {$u_0$};
       \node[iso2cell,minimum width=1.1cm] (eq1) at (1.5,0.5) {};
       \node[2cell,inner sep=2pt] (t) at (0.5,0) {$\lambda$};
       \node[iso2cell,minimum width=1.1cm] (eq2) at (1.5,-0.5) {};
       \node (m) at (1,0) {};
       \node (mm) at (2,0) {};
       \node (o1) at (0,-1) {$s$};
       \node (o2) at (1.5,-1) {$u_1$};
       \draw (i1) -- (i1 |- t.north);
       \draw (i2) -- (i2 |- eq1.north);
       \draw (m |- eq1.south) to node {\tiny $d_0$} (m |- t.north);
       \draw (m |- t.south) to node {\tiny $d_1$} (m |- eq2.north);
       \draw (mm |- eq1.south) to node {\tiny $f$} (mm |- eq2.north);
       \draw (o1) -- (o1 |- t.south);
       \draw (o2) -- (o2 |- eq2.south);
  \end{pic}
  =
  \begin{pic}
       \node (i1) at (0,1) {$r$};
       \node (i2) at (1,1) {$u_0$};
       \node[2cell,inner sep=2pt] (t) at (0.5,0) {$\sigma$};
       \node (o1) at (0,-1) {$s$};
       \node (o2) at (1,-1) {$u_1$};
       \draw (i1) -- (i1 |- t.north);
       \draw (i2) -- (i2 |- t.north);
       \draw (o1) -- (o1 |- t.south);
       \draw (o2) -- (o2 |- t.south);
  \end{pic};
  \quad\quad
  \begin{pic}[auto]
       \node (i1) at (0,1) {$r$};
       \node (i2) at (1,1) {$d_0$};
       \node (i3) at (2,1) {$f$};
       \node[2cell,inner sep=2pt] (t) at (0.5,0) {$\lambda$};
       \node (o1) at (0,-1) {$s$};
       \node (o2) at (1,-1) {$d_1$};
       \node (o3) at (2,-1) {$f$};
       \draw (i1) -- (i1 |- t.north);
       \draw (i2) -- (i2 |- t.north);
       \draw (i3) -- (o3);
       \draw (o1) -- (o1 |- t.south);
       \draw (o2) -- (o2 |- t.south);
  \end{pic}
  =
   \begin{pic}[auto]
       \node (i1) at (0,1) {$r$};
       \node (i2) at (1,1) {$d_0$};
       \node (i3) at (2,1) {$f$};
       \node[iso2cell,minimum width=1.1cm] (eq1) at (1.5,0.5) {};
       \node[2cell,inner sep=2pt] (t) at (0.5,0) {$\sigma$};
       \node[iso2cell,minimum width=1.1cm] (eq2) at (1.5,-0.5) {};
       \node (o1) at (0,-1) {$s$};
       \node (o2) at (1,-1) {$d_1$};
       \node (o3) at (2,-1) {$f$};
       \draw (i1) -- (i1 |- t.north);
       \draw (i2) -- (i2 |- eq1.north);
       \draw (i3) -- (i3 |- eq1.north);
       \draw (i2 |- eq1.south) to node {\tiny $u_0$} (i2 |- t.north);
       \draw (i2 |- t.south) to node {\tiny $u_1$} (i2 |- eq2.north);
       \draw (o1) -- (o1 |- t.south);
       \draw (o2) -- (o2 |- eq2.south);
       \draw (o3) -- (o3 |- eq2.south);
  \end{pic}\,.
\end{align}
\item Given 2-cells $\xi,\eta$ such that the two composites
\begin{align}\label{eq:comma2}
   \begin{pic}[auto]
       \node (i1) at (0,1.2) {$r$};
       \node (i2) at (1,1.2) {$d_0$};
       \node (i3) at (2,1.2) {$f$};
       \node (o1) at (0,-1.2) {$s$};
       \node (o2) at (1,-1.2) {$d_1$};
       \node (o3) at (2,-1.2) {$f^\prime$};
       \node[2cell,inner sep=2pt] (xi) at (1.5,0.4) {$\xi$};
       \node[2cell,inner sep=2pt] (la) at (0.5,-0.45) {$\lambda$};
       \draw (i1) -- (i1 |- la.north);
       \draw (i2) -- (i2 |- xi.north);
       \draw (i3) -- (i3 |- xi.north);
       \draw (i2 |- xi.south) to node {\tiny $d_0$} (i2 |- la.north);
       \draw (o1) -- (o1 |- la.south);
       \draw (o2) -- (o2 |- la.south);
       \draw (o3) -- (o3 |- xi.south);
   \end{pic}
  &=
   \begin{pic}[auto]
       \node (i1) at (0,1.2) {$r$};
       \node (i2) at (1,1.2) {$d_0$};
       \node (i3) at (2,1.2) {$f$};
       \node (o1) at (0,-1.2) {$s$};
       \node (o2) at (1,-1.2) {$d_1$};
       \node (o3) at (2,-1.2) {$f^\prime$};
       \node[2cell,inner sep=2pt] (xi) at (1.5,-0.4) {$\eta$};
       \node[2cell,inner sep=2pt] (la) at (0.5,0.4) {$\lambda$};
       \draw (i1) -- (i1 |- la.north);
       \draw (i2) -- (i2 |- la.north);
       \draw (i3) -- (i3 |- xi.north);
       \draw (i2 |- la.south) to node {\tiny $d_1$} (i2 |- xi.north);
       \draw (o1) -- (o1 |- la.south);
       \draw (o2) -- (o2 |- xi.south);
       \draw (o3) -- (o3 |- xi.south);
   \end{pic}
\end{align}
are equal, then there exists a unique 2-cell $\phi$ such that
$\xi=d_0\phi$, $\eta=d_1\phi$.
\begin{align}
  \exists !\,
   \begin{pic}
      \node (i1) at (0,1) {$f$};
      \node (o1) at (0,-1) {$f^\prime$};
      \node[fill=white,circle,draw,line width= 1.25pt,inner sep=1pt] (phi) at (0,0) {$\phi$};
      \draw (i1) -- (phi.north);
      \draw (phi.south) -- (o1);
   \end{pic}
   \textrm{~s.t.~}
  \begin{pic}
       \node (i1) at (0,1) {$d_0$};
       \node (i2) at (1,1) {$f$};
       \node[2cell,inner sep=2pt] (t) at (0.5,0) {$\xi$};
       \node (o1) at (0,-1) {$d_0$};
       \node (o2) at (1,-1) {$f^\prime$};
       \draw (i1) -- (i1 |- t.north);
       \draw (i2) -- (i2 |- t.north);
       \draw (o1) -- (o1 |- t.south);
       \draw (o2) -- (o2 |- t.south);
  \end{pic}
  &=
  \begin{pic}
      \node (i1) at (0,1) {$d_0$};
      \node (i2) at (1,1) {$f$};
      \node (o1) at (0,-1) {$d_0$};
      \node (o2) at (1,-1) {$f^\prime$};
      \node[fill=white,circle,draw,line width= 1.25pt,inner sep=1pt] (phi) at (1,0) {$\phi$};
      \draw (i2) -- (phi.north);
      \draw (phi.south) -- (o2);
      \draw (i1) -- (o1);
   \end{pic}
   \,\,\textrm{~and~}
  \begin{pic}
       \node (i1) at (0,1) {$d_1$};
       \node (i2) at (1,1) {$f$};
       \node[2cell,inner sep=2pt] (t) at (0.5,0) {$\eta$};
       \node (o1) at (0,-1) {$d_1$};
       \node (o2) at (1,-1) {$f^\prime$};
       \draw (i1) -- (i1 |- t.north);
       \draw (i2) -- (i2 |- t.north);
       \draw (o1) -- (o1 |- t.south);
       \draw (o2) -- (o2 |- t.south);
  \end{pic}
  =
     \begin{pic}
      \node (i1) at (0,1) {$d_1$};
      \node (i2) at (1,1) {$f$};
      \node (o1) at (0,-1) {$d_1$};
      \node (o2) at (1,-1) {$f^\prime$};
      \node[fill=white,circle,draw,line width= 1.25pt,inner sep=1pt] (phi) at (1,0) {$\phi$};
      \draw (i2) -- (phi.north);
      \draw (phi.south) -- (o2);
      \draw (i1) -- (o1);
   \end{pic}
\end{align}
\end{itemize}
\end{definition}
The comma object of the identity opspan $(1,A,1)$ from $A$ to $A$
is denoted by $\Phi A= A/A$.
When $\Phi A$ exists for each object $A$
and if $\catC$ has 2-pullbacks, then $\catC$ is a \emph{representable}
2-category. In a representable 2-category every opspan $(r,D,s)$ has a
comma object via span composition $r/s = s^*\circ \Phi D\circ r$.

An intuitive way to think about the 2-category $\catC$ is that each 0-cell $A$
itself has objects and morphisms,
but in the generalized sense of generalized elements in a topos.
Given a ``stage of definition'', another 0-cell $W$,
the generalized objects and morphisms of $A$ are the objects and morphisms
of the category $\catC(W,A)$.
1-cells $A\to A'$ and 2-cells between them then give functors
$\catC(W,A) \to \catC(W,A')$ and natural transformations between them.
Hiding $W$ we can then understand Definition~\ref{def:commaObject}
as saying that the (generalized) objects of $r/s$ are triples
$(u_0,u_1,\sigma)$ where $u_0,u_1$ are objects of $A$ and $B$
and $\sigma\colon ru_0 \to su_1$ is a morphism.
A morphism from $(u_0,u_1,\sigma)$ to $(u'_0,u'_1,\sigma')$
is a pair $(\xi,\eta)$ where $\xi\colon u_0 \to u'_0$
and $\eta\colon u_1 \to u'_1$, and there is a commutative square
\[
  \xymatrix{
    ru_0 \ar@{->}[d]_{r\xi} \ar@{->}[r]^{\sigma}
    & su_1 \ar@{->}[d]^{s\eta} \\
    ru'_0 \ar@{->}[r]_{\sigma'}
    & su'_1
  }
\]
It is often helpful to calculate 1-cells and 2-cells in terms of
their action on generalized objects and morphisms.

If $p\colon E \to B$ is a bundle (1-cell) in $\catC$,
and $u$ is a generalized object of $B$,
then the pullback $u^{\ast}E$ is the \emph{generalized fibre} of $p$ over $u$.
The reason for this point of view can be understood by considering the case
where the stage of definition $W$ is the terminal object 1 in $\mathbf{Loc}$
or in the 2-category of categories, when pullback along points gives actual fibres.
Thus geometricity of a construction as preservation under pullback can be understood
as the property that the construction works fibrewise,
and hence is compatible with viewing a bundle as a dependent type.

Let $\monD$ be a 2-monad on a 2-category $\catC$, with unit
$i \colon 1 \rightarrow \monD$ and multiplication (composition)
$c \colon \monD\monD \rightarrow \monD$.
\begin{definition}\label{def:laxAlgebra}
A \emph{lax $\monD$-algebra} is a quadruple
$(E,c,\zeta,\theta)$ where $E$ (the carrier) is an object of $\catC$,
$c$ (the structure map) is an arrow $c \colon \monD E \rightarrow E$ and
$\zeta,\theta$ are 2-cells
\begin{align}
  \vcenter{\hsize=2cm
     \xymatrix{
   E
     \ar@{->}[d]_{i_{E}}
     \ar@{->}[dr]^{1}
    &
   \\
   \monD E
     \ar@{->}[r]_{c}
    &
   E
   {\ar@{=>}^<<{\scriptstyle \zeta} (6,-7); (2,-11)}
  }}\simeq
  \begin{pic}
	   \node[unit] (t) at (0.5,1){$\zeta$};
	   \node (o1) at (0,0) {$c$};
	   \node (o2) at (1,0) {$i_E$};
	   \draw (o1 |- t.center) -- (o1);
	   \draw (o2 |- t.center) -- (o2);
  \end{pic}
;\quad\quad
  \vcenter{\hsize=2cm
     \xymatrix{
   \monD\monD E
     \ar@{->}[r]^{cE}
     \ar@{->}[d]_{\monD c}
     &
   \monD E
     \ar@{->}[d]^{c}
   \\
   \monD E
     \ar@{->}[r]_{c}
     &
   E
   {\ar@{=>}^{\scriptstyle \theta} (8,-11); (15,-5)}
  }}
  \simeq
  \begin{pic}
       \node (i1) at (0,1) {$c$};
       \node (i2) at (1,1) {$\monD c$};
       \node[2cell] (t) at (0.5,0) {$\theta$};
       \node (o1) at (0,-1) {$c$};
       \node (o2) at (1,-1) {$cE$};
       \draw (i1) -- (i1 |- t.north);
       \draw (i2) -- (i2 |- t.north);
       \draw (o1) -- (o1 |- t.south);
       \draw (o2) -- (o2 |- t.south);
  \end{pic};
\end{align}
such that the following 3 equations hold.
\begin{align}
  \begin{pic}[auto]
     \node (i1) at (1.5,1.5) {$c$};
     \node[unit] (z1) at (0.5,0.75) {$\zeta$};
     \node[iso2cell] (eq1) at (1.25,0.2) {};
     \node[2cell] (t) at (0.5,-0.5) {$\theta$};
     \node[iso2cell] (eq2) at (1.25,-1.2) {};
     \node (o1) at (0,-1.5) {$c$};
     \draw (z1.10) to node[swap] {\tiny$c$} (z1.10 |- t.north);
     \draw (o1 |- t.south) -- (o1);
     \draw (z1.90) to node[swap] {\tiny$i_E$} (z1.90 |- eq1.north);
     \draw (i1 |- eq1.north) -- (i1);
     \draw (z1.90 |- eq1.south) to node[swap] {\tiny $\monD c$} (z1.90 |- t.north);
     \draw (z1.90 |- t.south) to node[swap] {\tiny $c E$} (z1.90 |- eq2.north);
     \draw (i1 |- eq1.south) to node {\tiny $i_{\monD\! E}$} (i1 |- eq2.north);
  \end{pic}
  \stackrel{(1)}{=}
  \begin{pic}
     \node (i1) at (0,1.5) {$c$};
     \node (o1) at (0,-1.5) {$c$};
     \draw (i1) -- (o1);
  \end{pic}
  \stackrel{(2)}{=}
  \begin{pic}[auto]
     \node (i1) at (0,1.5) {$c$};
     \node (o1) at (0,-1.5) {$c$};
     \node[unit] (z1) at (1.25,0.75) {$\monD\zeta$};
     \node[2cell] (t) at (0.5,0) {$\theta$};
     \node[iso2cell] (eq1) at (1.25,-0.75) {};
     \draw (i1) -- (i1 |- t.north);
     \draw (z1.20) to node[swap] {\tiny $\monD\!c$} (z1.20 |- t.north);
     \draw (o1 |- t.south) -- (o1);
     \draw (z1.20 |- t.south) to node[swap] {\tiny$cE$} (z1.20 |- eq1.north);
     \draw (z1.80) to node {\tiny $\monD i_E$} (z1.80 |- eq1.north);
   \end{pic}
;\quad
  \begin{pic}[auto]
     \node (i1) at (0,1.5) {$c $};
     \node (i2) at (1,1.5) {$\monD c $};
     \node (i3) at (2,1.5) {$\monD^2 c $};
     \node[2cell] (dt) at (1.5,0.6) {$\monD\theta $};
     \node[iso2cell] (eq1) at (1.5,-1) {};
     \node[2cell] (t) at (0.5,-0.3) {$\theta $};
     \node (o1) at (0,-1.5) {$c $};
     \node (o2) at (1,-1.5) {$cE $};
     \node (o3) at (2,-1.5) {$c\monD E $};
     \draw (i1) -- (i1 |- t.north);
     \draw (o1) -- (o1 |- t.south);
     \draw (i2) -- (i2 |- dt.north);
     \draw (i3) -- (i3 |- dt.north);
     \draw (i2 |- dt.south) to node[swap] {\tiny $\monD c$} (i2 |- t.north);
     \draw (i2 |- t.south) to node[swap] {\tiny $cE$} (i2 |- eq1.north);
     \draw (i3 |- dt.south) to node {\tiny $\monD c$} (i3 |- eq1.north);
     \draw (o2 |- eq1.south) -- (o2);
     \draw (o3 |- eq1.south) -- (o3);
  \end{pic}
  \stackrel{(3)}{=}
  \begin{pic}[auto]
     \node (i1) at (0,1.5) {$c $};
     \node (i2) at (1,1.5) {$\monD c $};
     \node (i3) at (2,1.5) {$\monD^2 c $};
     \node[2cell] (t1) at (0.5,0.7) {$\theta $};
     \node[iso2cell] (eq1) at (1.5,0) {};
     \node[2cell] (t2) at (0.5,-0.7) {$\theta $};
     \node (o1) at (0,-1.5) {$c $};
     \node (o2) at (1,-1.5) {$cE $};
     \node (o3) at (2,-1.5) {$c\monD E $};
     \draw (i1) -- (i1 |- t1.north);
     \draw (i2) -- (i2 |- t1.north);
     \draw (i3) -- (i3 |- eq1.north);
     \draw (o1) -- (o1 |- t2.south);
     \draw (o2) -- (o2 |- t2.south);
     \draw (o3) -- (o3 |- eq1.south);
     \draw (i1 |- t1.south) to node[swap] {\tiny $c$} (i1 |- t2.north);
     \draw (i2 |- t1.south) to node[swap] {\tiny $cE$} (i2 |- eq1.north);
     \draw (i2 |- eq1.south) to node[swap] {\tiny $\monD c$} (i2 |- t2.north);
  \end{pic}
\end{align}
  A \emph{$\monD$-pseudoalgebra} has $\zeta,\theta$ invertible; a
\emph{normalized $\monD$-algebra} has $\zeta=1$, a
\emph{$\monD$-algebra} has $\zeta=1=\theta$. $(\monD E,c,\zeta=1,\theta=1)$
is the \emph{free $\monD$-algebra} on $E$.
\end{definition}

\begin{definition}
A \emph{lax homomorphism} of lax $\monD$-algebras from $E$ to $E^\prime$
is a pair $(f,\theta_f)$ of an arrow $f \colon E \rightarrow E^\prime$ and
a 2-cell
\begin{align}
   \vcenter{\hsize=2cm
     \xymatrix{
   \monD E
     \ar@{->}[r]^{c}
     \ar@{->}[d]_{\monD f}
     &
   E
     \ar@{->}[d]^{f}
   \\
   \monD E^\prime
     \ar@{->}[r]_{c}
     &
   E^\prime
   {\ar@{=>}^{\scriptstyle \theta_f} (8,-11); (15,-5)}
  }}
  \simeq
  \begin{pic}
     \node (i1) at (0,1) {$c$};
     \node (i2) at (1,1) {$\monD f$};
     \node[2cell] (t) at (0.5,0) {$\theta_f$};
     \node (o1) at (0,-1) {$f$};
     \node (o2) at (1,-1) {$c$};
     \draw (i1) -- (i1 |- t.north);
     \draw (i2) -- (i2 |- t.north);
     \draw (o1) -- (o1 |- t.south);
     \draw (o2) -- (o2 |- t.south);
  \end{pic}
\end{align}
such that the following equations hold
\begin{align}
  \begin{pic}[auto]
     \node[unit] (z1) at (0.5,0.75) {$\zeta$};
     \node[iso2cell,minimum width=1.1cm] (eq1) at (1.5,0.2) {};
     \node[2cell,minimum width=1.1cm] (t) at (0.5,-0.4) {$\theta_f$};
     \node (i) at (2,1.5) {$f$};
     \node (m1) at (0,0) {};
     \node (m2) at (1,0) {};
     \node (o1) at (0,-1.5) {$f$};
     \node (o2) at (1,-1.5) {$c$};
     \node (o3) at (2,-1.5) {$i_E$};
     \draw (m1 |- z1) to node[swap] {\tiny $c$}  (m1 |- t.north);
     \draw (m2 |- z1) to node[swap] {\tiny $i_{E^\prime}$}  (m2 |- eq1.north);
     \draw (m2 |- eq1.south) to node[swap] {\tiny $\monD f$}  (m2 |- t.north);
     \draw (o1 |- t.south) -- (o1);
     \draw (o2 |- t.south) -- (o2);
     \draw (i) to (i |- eq1.north);
     \draw (o3 |- eq1.south) -- (o3);
  \end{pic}
  \stackrel{(5)}{=}
  \begin{pic}[auto]
     \node (i) at (0,1.5) {$f$};
     \node (o1) at (0,-1.5) {$f$};
     \node (o2) at (1,-1.5) {$c$};
     \node (o3) at (2,-1.5) {$i_E$};
     \node[unit] (z1) at (1.5,0.5) {$\zeta$};
     \draw (i) to (o1);
     \draw (o2 |- z1) --(o2);
     \draw (o3 |- z1) --(o3);
  \end{pic}
;\qquad
  \begin{pic}[auto]
     \node (i1) at (0,1.5) {$c$};
     \node (i2) at (1,1.5) {$\monD c$};
     \node (i3) at (2,1.5) {$\monD^2 f$};
     \node (o1) at (0,-1.5) {$f$};
     \node (o2) at (1,-1.5) {$c$};
     \node (o3) at (2,-1.5) {$c_E$};
     \node[2cell,minimum width=1.1cm] (dt) at (1.5,0.8) {$\monD\theta_f$};
     \node[2cell,minimum width=1.1cm] (tf) at (0.5,0) {$\theta_f$};
     \node[2cell,minimum width=1.1cm] (t) at (1.5,-0.8) {$\theta$};
     \draw (i1) to (i1 |- tf.north);
     \draw (i2) to (i2 |- dt.north);
     \draw (i3) to (i3 |- dt.north);
     \draw (o1 |- tf.south) to (o1);
     \draw (o2 |- t.south) to (o2);
     \draw (o3 |- t.south) to (o3);
     \draw (i3 |- dt.south) to node {\tiny $\monD c$} (i3 |- t.north);
     \draw (i2 |- dt.south) to node[swap] {\tiny $\monD f$} (i2 |- tf.north);
     \draw (i2 |- tf.south) to node {\tiny $c$} (i2 |- t.north);
  \end{pic}
  \stackrel{(6)}{=}
  \begin{pic}[auto]
     \node (i1) at (0,1.5) {$c$};
     \node (i2) at (1,1.5) {$\monD c$};
     \node (i3) at (2,1.5) {$\monD^2 f$};
     \node (o1) at (0,-1.5) {$f$};
     \node (o2) at (1,-1.5) {$c$};
     \node (o3) at (2,-1.5) {$c_E$};
     \node[2cell,minimum width=1.1cm] (t) at (0.5,0.7) {$\theta$};
     \node[iso2cell,minimum width=1.1cm] (eq) at (1.5,0) {};
     \node[2cell,minimum width=1.1cm] (tf) at (0.5,-0.7) {$\theta_f$};
     \draw (i1) to (i1 |- t.north);
     \draw (i2) to (i2 |- t.north);
     \draw (i3) to (i3 |- eq.north);
     \draw (o1 |- tf.south) to (o1);
     \draw (o2 |- tf.south) to (o2);
     \draw (o3 |- eq.south) to (o3);
     \draw (i1 |- t.south) to node {\tiny $c$} (i1 |- tf.north);
     \draw (i2 |- t.south) to node {\tiny $cE^\prime$} (i2 |- eq.north);
     \draw (i2 |- eq.south) to node {\tiny $\monD f$} (i2 |- tf.north);
   \end{pic}
\end{align}
A lax homomorphism is called a \emph{pseudo-homomorphism} if $\theta_f$
is invertible, and is called a \emph{homomorphism} when $\theta_f$ is identity.
\end{definition}

\subsubsection{The monads $\monL_B$ and $\monR_B$}
\label{sec:LBandRB}

Street defines \cite[pp.118,~122]{Street} two 2-monads for which pseudoalgebra
structure is related to opfibrations and fibrations. This is done on the slice 2-category
$\catC/B$, with objects $(E, p \colon E \rightarrow B)$, arrows
$f \colon E \rightarrow E^\prime$ in $\catC$ such that
$p^\prime f = p$, and 2-cells
$\xymatrix{(E,p)
    \ar@/^1pc/@{->}[r]^{f}
    \ar@/_1pc/@{->}[r]_{g}
   {\ar@{=>}^<<{\scriptstyle \sigma} (11,2); (11,-2)}
  & (E^\prime,p^\prime) }$
in $\catC$ such that $p^\prime\sigma = 1_p$.

The first monad, for opfibrations, is defined on a 2-functor
$\monL_B \colon \catC/B \rightarrow \catC/B$ given by
\begin{align}
  \xymatrix{
    (E,p)
      \ar@/^1pc/@{->}[r]^{f}
      \ar@/_1pc/@{->}[r]_{g}
     {\ar@{=>}^<<{\scriptstyle \sigma} (11,2); (11,-2)}
    &
    (E^\prime,p^\prime)
      \ar@{|->}[r]
    &
    (p/B,d_1)
      \ar@/^1pc/@{->}[r]^{f/B}
      \ar@/_1pc/@{->}[r]_{g/B}
     {\ar@{=>}^<<{\scriptstyle \sigma/B} (59,2); (59,-2)}
    &
    (p^\prime/B,d_1)
    }
\end{align}
Thus a generalized object of $\monL_B E$ is a triple $(e,b,\alpha)$
with $e,b$ objects of $E$ and $B$, and $\alpha\colon pe \to b$.
$\monL_B$ can also be expressed in terms of $\Phi$ and span composition as
$\monL_B(E,p) =(\Phi B\circ p,d_1\hat{p})$,
$\monL(f)=1\circ f$, and $\monL_B(\sigma) = 1\circ \sigma$.
We shall see these in more detailed tangle form when generalized in
Section~\ref{sec:lbullet}.

We now have a diagram
\[
  \xymatrix{
    {\monL_B^2 E} \ar@{->}[d]_{d_1} \ar@{->}[r]^{d_0}
        &    {\monL_B E} \ar@{->}[d]_{d_1} \ar@{->}[r]^{d_0}
        &    E \ar@{->}[d]^{p} \\
    B \ar@{=}[r] & B \ar@{=}[r] & B
    {\ar@{<=}^{\lambda} (4,-8); (12,-8)}
    {\ar@{<=}^{\lambda} (24,-8); (32,-8)}
  }
\]
The unit $i$ and multiplication $c$ of Street's monad can be defined in terms of
tangles as follows.

\begin{definition}\label{def:ic}
The components on $(E,p)$ of $i \colon 1\rightarrow \monL_B$ and
$c \colon \monL_B^2\rightarrow \monL_B$ are defined by
\[
  d_0 i = 1_E \quad d_1 i = p \quad d_0 c = d_0 d_0 \quad d_1 c = d_1
\]
and
\[
  \begin{pic}[auto]  
     \node (i1) at (0,1.5) {$p$};
     \node (i2) at (1,1.5) {$d_0$};
     \node (i3) at (2,1.5) {$i$};
     \node (o1) at (1,0) {$d_1$};
     \node (o2) at (2,0) {$i$};
     \node[2cell,minimum width=1.1cm] (t) at (0.5,0.7) {$\lambda$};
     \draw (i1) to (i1 |- t.north);
     \draw (i2) to (i2 |- t.north);
     \draw (i3) to (o2);
     \draw (o1 |- t.south) to (o1);
   \end{pic}
   =
   \begin{pic}[auto]  
     \node (i1) at (0,1.5) {$p$};
     \node (i2) at (1,1.5) {$d_0$};
     \node (i3) at (2,1.5) {$i$};
     \node (o1) at (0,0) {$d_1$};
     \node (o2) at (2,0) {$i$};
     \node[iso2cell,minimum width=2.1cm] (eq) at (1,0.7) {};
     \draw (i1) to (i1 |- eq.north);
     \draw (i2) to (i2 |- eq.north);
     \draw (i3) to (i3 |- eq.north);
     \draw (o1 |- eq.south) to (o1);
     \draw (o2 |- eq.south) to (o2);
   \end{pic}
   \text{, }\quad
   \begin{pic}[auto]  
     \node (i1) at (0,1.5) {$p$};
     \node (i2) at (1,1.5) {$d_0$};
     \node (i3) at (2,1.5) {$c$};
     \node (o1) at (1,0) {$d_1$};
     \node (o2) at (2,0) {$c$};
     \node[2cell,minimum width=1.1cm] (t) at (0.5,0.7) {$\lambda$};
     \draw (i1) to (i1 |- t.north);
     \draw (i2) to (i2 |- t.north);
     \draw (i3) to (o2);
     \draw (o1 |- t.south) to (o1);
   \end{pic}
   =
   \begin{pic}[auto] 
     \node (i0) at (0,0) {$p$};
     \node (i1) at (1,0) {$d_0$};
     \node (i2) at (2,0) {$c$};
     \node[iso2cell,minimum width=1.1cm] (ieq) at (1.5,-0.7) {};
     \node[2cell,minimum width=1.1cm] (t1) at (0.5,-1.4) {$\lambda$};
     \node[2cell,minimum width=1.1cm] (t2) at (1.5,-2.1) {$\lambda$};
     \node[iso2cell,minimum width=1.1cm] (oeq) at (1.5,-2.8) {};
     \node (o1) at (1,-3.5) {$d_1$};
     \node (o2) at (2,-3.5) {$c$};
     \draw (i0) to (i0 |- t1.north);
     \draw (i1) to (i1 |- ieq.north);
     \draw (i2) to (i2 |- ieq.north);
     \draw (i1 |- ieq.south) to node {\tiny $d_0$} (i1 |- t1.north);
     \draw (i2 |- ieq.south) to node {\tiny $d_0$} (i2 |- t2.north);
     \draw (i1 |- t1.south) to node {\tiny $d_1$} (i1 |- t2.north);
     \draw (i2 |- t2.south) to node {\tiny $d_1$} (i2 |- oeq.north);
     \draw (o1 |- oeq.south) to (o1);
     \draw (o2 |- oeq.south) to (o2);
   \end{pic}
\]
\end{definition}
A generalized object of $\monL^2_B E$ is $(e,b_1,b_2,\alpha,\alpha_1)$
with $\alpha\colon pe \to b_1$ and $\alpha_1 \colon b_1 \to b_2$,
and then $c(e,b_1,b_2,\alpha,\alpha_1) = (e,b_2,\alpha_1 \alpha)$;
also, $i(e) = (e,pe,1)$.

Street shows that $i$ and $c$ are 2-natural transformations fulfilling the monad equations,
and hence making $(\monL_B,i,c)$ a 2-monad on $\catC/B$.
Moreover, although we shall not use this, $\monL_B$ has the `Kock property', that is $c \dashv i\monL_B$
in the 2-functor 2-category $[\catC,\catC]$ with identity counit.

Similarly one defines the monad $\monR_B$ which is the monad $\monL_B$
in the category $\catC^{co}/B$ where the direction of 2-cells are
reversed. This amounts to using comma objects $(B/p)$, arrows $B/f$ and
2-cells $B/\sigma$ in $\catC/B$. The further development is dual.
\begin{definition}\label{def:01fibration}
An arrow $p \colon E \rightarrow B$ is called a \emph{pseudo-opfibration}
(Street: \emph{0-fibration})
when $(E,p)$ supports the structure of an $\monL_B$-pseudoalgebra.
It is an \emph{opfibration} (Street: \emph{normal 0-fibration})
when it supports the structure of a normalized $\monL_B$-pseudoalgebra.

Analogously, $p\colon E \rightarrow B$ is called a \emph{pseudofibration}
(Street: \emph{1-fibration}) or a \emph{fibration} (Street: \emph{normal 1-fibration})
if it supports the corresponding structures for $\monR_B$.

We shall not use this, but an opfibration or fibration is said to \emph{split}
if the normal pseudoalgebra structure can be chosen to be an algebra.
\end{definition}
In terms of generalized objects, the pseudoalgebra structure map
$c\colon\monL_B E \to E$ maps $(e,b,\alpha)$ to some object in the fibre over $b$.
In doing so, it lifts $\alpha \colon pe \to b$
to a fibre map from $(pe)^{\ast}E$ to $b^{\ast}E$.

In practice the pseudo- opfibrations and fibrations are too weak to be useful for us,
as the lifting to fibre maps (reindexing) need not be functorial in any sense.
For good properties capturing the standard notions of fibration we need the
normal versions; for these reindexing is at least pseudofunctorial.
Note that all the notions are inherently cloven,
since the adjoint in the Chevalley criterion chooses supine or prone liftings,
and hence amounts to a cleavage or cocleavage.

\begin{proposition}[Chevalley criterion]\label{prop:Chevalley}
\quad\\*
\begin{enumerate}
\item
  The arrow $p \colon E \rightarrow B$ is a pseudo-opfibration over $B$
  if and only if the arrow $\tilde{p} \colon \Phi E\rightarrow p/B$
  corresponding to the 2-cell
  \begin{align}
  \vcenter{\hsize=3cm\xymatrix{
    \Phi E
    \ar@{->}[r]^{pd_1}
    \ar@{->}[d]_{d_0}
   &
    B
    \ar@{->}[d]^{1}
   \\
    E
    \ar@{->}[r]_{p}
   &
    B
    {\ar@{=>}^<<<<{\scriptstyle p\lambda} (8,-11); (13,-6)}
   }}
  \simeq
  \begin{pic}[auto]
     \node (i1) at (0,1) {$p$};
     \node (i2) at (1,1) {$d_0$};
     \node (o1) at (0,-1) {$1$};
     \node (o2) at (1,-1) {$pd_1$};
     \node[coupon,minimum width=1.1cm,inner sep=2pt] (t) at (0.5,0) {$p\lambda$};
     \draw (i1) to (i1 |- t.north);
     \draw (i2) to (i2 |- t.north);
     \draw (o2 |- t.south) to (o2);
    \end{pic}
  \end{align}
  has a left adjoint with unit an isomorphism.
\item
  The arrow $p$ is an opfibration
  iff $\tilde{p}$ has a left adjoint with unit an identity.
\item
  The corresponding statements hold for pseudofibrations and fibrations,
  replacing $p/B$ by $B/p$ and requiring a right adjoint with counit either an isomorphism
  or an identity.
\end{enumerate}
\end{proposition}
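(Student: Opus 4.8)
The strategy is to turn Street's identification $\monL_B E = p/B$ into a direct correspondence between the adjunction asked for and the $\monL_B$-pseudoalgebra structure of Definition~\ref{def:01fibration}, reading everything off generalized elements. First I would make $\tilde p$ explicit. The square carrying $p\lambda$ presents, for the comma object $p/B$ of the opspan $(p,B,1_B)$, a map $d_0\colon \Phi E\to E$, a map $pd_1\colon \Phi E\to B$, and a 2-cell $p\lambda\colon p\,d_0\Rightarrow p\,d_1$; by the universal property of $p/B$ this induces the unique $\tilde p\colon \Phi E\to p/B$ with $d_0\tilde p=d_0$ and $d_1\tilde p=pd_1$, sending a generalized arrow $\gamma\colon e_0\to e_1$ of $E$ to the triple $(e_0,pe_1,p\gamma)$. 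Writing $\delta\colon E\to\Phi E$ for the map $e\mapsto(e,e,1_e)$ (so $d_0\delta=d_1\delta=1_E$), one checks $\tilde p\,\delta=i$, which is the bridge between the two formulations.

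Next I would set up the dictionary between a left adjoint $L\dashv\tilde p$ and a pseudoalgebra $(E,c,\zeta,\theta)$. From $L$ I define the structure map $c:=d_1 L$; on elements $L$ sends $(e,b,\alpha)$ to an arrow $\ell_\alpha\colon e\to c(e,b,\alpha)$, the chosen lift, so that $d_0 L=d_0$ and $d_1 L=c$. The unit $\eta\colon 1_{p/B}\Rightarrow\tilde p L$ then has component at $(e,b,\alpha)$ of the form $(1_e,\beta_\alpha)$ with $\beta_\alpha\colon b\to p\,c(e,b,\alpha)$ the comparison satisfying $p\ell_\alpha=\beta_\alpha\alpha$; the coherence $\zeta$ records the behaviour of $\eta$ on identity liftings (its components along $i=\tilde p\,\delta$), while $\theta$ is produced from the comparison of iterated lifts using the monad multiplication $c$ together with $\lambda$. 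The pseudoalgebra equations of Definition~\ref{def:laxAlgebra} then come out of the two triangle identities, the uniqueness needed to equate induced 2-cells being supplied by the two-dimensional universal property of the comma object (second bullet of Definition~\ref{def:commaObject}). Conversely, from $(c,\zeta,\theta)$ I build $L$ by $d_0 L=d_0$, $d_1 L=c$, with lift-arrow recorded by $\lambda$, and read $\eta,\epsilon$ off $\zeta,\theta$, the triangle identities reducing to the pseudoalgebra axioms.

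The heart of the matter is matching invertibility, and this is the step I expect to be the main obstacle. The component $\beta_\alpha$ of $\eta$ at a \emph{general} $\alpha$ is a priori stronger data than $\zeta$, which concerns only the lift of an identity; so to see that ``$\eta$ invertible'' is equivalent to the pseudoalgebra condition one must use $\theta$ (the associativity/functoriality of lifting) to propagate invertibility from identities to all $\alpha$. Granting this, $\eta$ an isomorphism corresponds exactly to $\zeta,\theta$ invertible, i.e. to a pseudo-opfibration, and $\eta$ an identity corresponds to $\zeta=1$, i.e. to an opfibration, yielding the first two claims. The lax-idempotent (Kock) character of $\monL_B$ noted above is what forces $\theta$ to be invertible and makes the correspondence tight.

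Finally, the fibration claim is obtained by running the whole argument in $\catC^{co}/B$: this replaces $\monL_B$ by $\monR_B$, the comma object $p/B$ by $B/p$, the left adjoint by a right adjoint, and the unit by the counit, so it follows formally from the opfibration case by the duality already noted for $\monR_B$ in \cite{Street}.
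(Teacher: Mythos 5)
First, a point of comparison: the paper does not actually prove this proposition --- its proof consists of citing Proposition~9 of \cite{Street} for part~(1), ``the remarks following it'' for part~(2), and duality on 2-cells for part~(3). So any blind attempt is necessarily a different route; in outline yours is the same dictionary Street himself uses (generalized elements of $p/B$ and $\Phi E$, the left adjoint $L$ as the choice of lifts, $c = d_1 L$, triangle identities against the pseudoalgebra axioms of Definition~\ref{def:laxAlgebra}), and that architecture is sound, as is reducing part~(3) to duality in $\catC^{co}$.

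However, there are genuine gaps at exactly the delicate points, and they are not repairable by the mechanisms you name. (i) Your account of where invertibility comes from is wrong in mechanism: the unit $\eta$ is a 2-cell into $p/B$, so its $d_1$-component lives over $B$; along $i = \tilde{p}\delta$ the unit only records $p\zeta$-type data, not $\zeta$ itself, and since $p$ does not reflect invertibility, ``unit iso'' does not directly give $\zeta$ invertible. What is actually needed is the counit at $\delta$ (one has $\zeta^{-1} = d_1\epsilon\delta$ up to conventions) together with an ``opcartesian over an isomorphism is an isomorphism'' argument extracted from the adjunction's universal property. Your substitute --- propagating invertibility via $\theta$, ``granting this'', backed by the Kock property --- leaves the key step unproven; moreover the Kock property is machinery the paper explicitly declines to use, and the KZ-monad facts you invoke (lax-idempotence of $\monL_B$, and that lax algebras with invertible unit constraint over such monads are automatically pseudo) are nowhere established. (ii) Part~(2) is asserted rather than proved, and its hard direction fails as you state it: from a left adjoint with \emph{identity} unit, the induced structure $c = d_1 L$ has lifts lying strictly over the base, but the lift of an identity is only invertible (it is opcartesian over an identity), not an identity; so one obtains $\zeta$ as a vertical isomorphism, not $\zeta = 1$. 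Converting this into a \emph{normalized} pseudoalgebra amounts to re-choosing the cleavage, a pointwise operation that is available in $\mathbf{Cat}$ but not verbatim in a general representable 2-category; it needs a genuine argument. This is precisely the pseudo-versus-normal subtlety that the paper's acknowledgements credit B\'enabou and Street with clarifying, and it is the content of the ``remarks following'' Street's Proposition~9 on which the paper's one-line proof leans.
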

\begin{proof}
  (1) is \cite[proposition 9]{Street}, and (2) is the remarks following it.
  (3) follows by duality on 2-cells.
\end{proof}
\subsection{Liftings of 2-transitions}\label{sec:lifting}
The classical lifting result seems to go back to Applegate~\cite{Applegate},
see \cite{Johnston:AlgLiftings} and \cite{Manes:AlgTheories}. Let
$(\monD_1,i_1,c_1)$ be a monad on the (ordinary) category $\catD_1$ and
$(\monD_2,i_2,c_2)$ be a monad on $\catD_2$. Denote as usual the
category of Eilenberg-Moore algebras as $\catD_1^{\monD_1}$ or
$\monD_1\Alg$ etc. One has forgetful functors
$U_i \colon \catD_i^{\monD_i} \rightarrow \catD_i$ forgetting the algebra structure
map. Given a functor $T \colon \catD_1\rightarrow \catD_2$, then a functor
$\overline{T} \colon \catD_1^{\monD_1} \rightarrow \catD_2^{\monD_2}$ is a \emph{lifting}
of $T$ such that
\begin{align}\label{eq:lifting}
   \vcenter{\hsize=3cm\xymatrix{
    \catD_1^{\monD_1}
    \ar@{->}[r]^{\overline{T}}
    \ar@{->}[d]_{U_1}
   &
    \catD_2^{\monD_2}
    \ar@{->}[d]^{U_2}
   \\
    \catD_1
    \ar@{->}[r]^{T}
   &
    \catD_2
   }}
\end{align}
commutes.
\begin{lemma}[Applegate]
Let $\monD_i$ be monads on categories $\catD_i$ as above, and
$T \colon \catD_1\rightarrow \catD_2$ a functor. Then the functors
$\overline{T} \colon \catD_1^{\monD_1} \rightarrow \catD_2^{\monD_2}$ which are
liftings of $T$ are in 1-to-1 correspondence with natural
transformations $\psi \colon \monD_2T \rightarrow T\monD_1$ such that
the following diagram commutes.
\begin{equation}\label{eq:transitionDia}
  \xymatrix{
     T
     \ar@{->}[r]^{i_2 T}
     \ar@{->}[rd]_{Ti_1}
    &
     \monD_2 T
     \ar@{->}[d]^{\psi}
    &
     \monD_2^2 T
     \ar@{->}[l]_{c_2T}
     \ar@{->}[d]^{\monD_2\psi}
    \\
    &
     T\monD_1
    &
     \monD_2 T\monD_1
     \ar@{->}[d]^{\psi\monD_1}
    \\
    &
    &
     T\monD_1^2
     \ar@{->}[lu]^{Tc_1}
  }
\end{equation}
\end{lemma}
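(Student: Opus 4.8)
The plan is to exhibit an explicit bijection and to verify it in both directions, the two assignments being linked by the single formula $\beta_a = Ta\circ\psin{A}$. First I would observe that, because the square \eqref{eq:lifting} commutes on the nose, any lifting $\overline{T}$ is forced on objects to have the shape $\overline{T}(A,a)=(TA,\beta_a)$ for some $\monD_2$-algebra structure $\beta_a\colon \monD_2 TA \to TA$, and on arrows to satisfy $U_2\overline{T}(f)=T(U_1 f)$, i.e. $\overline{T}(f)=Tf$ regarded as a $\monD_2$-homomorphism. Thus giving a lifting amounts to giving, naturally in the algebra, a structure map $\beta_a$ on $TA$ making $Tf$ always a homomorphism, and the task is to match this data against the $\psi$ satisfying \eqref{eq:transitionDia}.

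Going from $\psi$ to a lifting: given a natural $\psi\colon \monD_2 T \to T\monD_1$ making \eqref{eq:transitionDia} commute, I would set $\overline{T}(A,a)=(TA,\,Ta\circ\psin{A})$ and $\overline{T}(f)=Tf$. The unit law for this structure map follows by composing with $i_2 T$, using the triangle of \eqref{eq:transitionDia}, namely $\psi\circ i_2 T = Ti_1$, and then the unit law $a\circ i_1=\Id$ of the algebra $(A,a)$. The associativity law is the substantial computation: expanding $(Ta\circ\psin{A})\circ \monD_2(Ta\circ\psin{A})$ I would use functoriality of $\monD_2$, then naturality of $\psi$ at the arrow $a$ to slide $\monD_2 Ta$ past $\psi$, then the outer condition $\psi\circ c_2 T = Tc_1\circ \psi\monD_1\circ \monD_2\psi$ of \eqref{eq:transitionDia}, and finally the algebra associativity law $a\circ \monD_1 a = a\circ c_1$, arriving at $(Ta\circ\psin{A})\circ c_2 T$. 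That $Tf$ is a $\monD_2$-homomorphism for every $\monD_1$-homomorphism $f$ is exactly naturality of $\psi$ at $f$ postcomposed with $T$ of the homomorphism condition $a'\circ\monD_1 f = f\circ a$; functoriality of $\overline{T}$ and $U_2\overline{T}=TU_1$ are immediate since $\overline{T}$ is $T$ on underlying arrows.

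Going from a lifting to $\psi$: given $\overline{T}$, I would evaluate it on the free algebras $(\monD_1 X,(c_1)_X)$, write the resulting structure map as $\xi_X\colon \monD_2 T\monD_1 X \to T\monD_1 X$, and define $\psin{X} = \xi_X\circ \monD_2 T (i_1)_X$. Naturality of $\psi$ follows from naturality of $i_1$ together with the fact that $T\monD_1 g$ is a homomorphism (this being $\overline{T}$ applied to the free homomorphism $\monD_1 g$), which gives $\xi_Y\circ \monD_2 T\monD_1 g = T\monD_1 g\circ \xi_X$; the two conditions of \eqref{eq:transitionDia} then come from the unit and associativity axioms for $\xi$ as a $\monD_2$-structure, after inserting $\monD_2 T i_1$ and using the monad laws of $\monD_1$. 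To see the assignments are mutually inverse I would prove the key reduction $\beta_a = Ta\circ\psin{A}$ for an arbitrary lifting: since $a\colon (\monD_1 A,(c_1)_A)\to (A,a)$ is a $\monD_1$-homomorphism, $Ta$ is a $\monD_2$-homomorphism, so $\beta_a\circ \monD_2 Ta = Ta\circ \xi_A$; precomposing with $\monD_2 T(i_1)_A$ and using $a\circ (i_1)_A = \Id$ collapses the left side to $\beta_a$ and the right to $Ta\circ\psin{A}$. The reverse round trip reduces, via naturality of $\psi$ and the monad unit law $(c_1)_X\circ \monD_1(i_1)_X = \Id$, back to $\psi$.

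The step I expect to be the main obstacle is the associativity verification in the $\psi\to\overline{T}$ direction: one must chase a diagram that simultaneously invokes naturality of $\psi$ at the structure map $a$, the outer square of \eqref{eq:transitionDia}, functoriality of $\monD_2$ on composites, and the algebra associativity law, with careful bookkeeping of where $\monD_1$, $\monD_2$ and $T$ are whiskered. Everything else is either forced (the shape of $\overline{T}$), a direct consequence of naturality (preservation of homomorphisms), or a short calculation with the monad unit laws (the two round-trip identities).
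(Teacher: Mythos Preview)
Your proposal is correct and complete. The paper does not actually prove this lemma: it is stated as a known result with references to Applegate, Johnstone, and Manes, and the only content the paper adds is the single formula $\overline{T}(\monD_1 E\xrightarrow{c_1}E)=(\monD_2 TE\xrightarrow{\psi E}T\monD_1 E\xrightarrow{Tc_1}TE)$ for the construction in the $\psi\to\overline{T}$ direction. Your argument reproduces exactly this formula and then supplies everything the paper omits (the algebra axioms, the homomorphism check, the inverse construction via free algebras, and the round-trip identities), all by the standard route.
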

Such pairs $(T,\psi)$ are commonly known as \emph{monad functors} from
$\mathfrak{D}_1$ to $\mathfrak{D}_2$,
but we shall follow~\cite{MarmoWood:CohPsLW} in referring to $\psi$
as a \emph{transition from $\mathfrak{D}_1$ to $\mathfrak{D}_2$ along $T$}.

The commutative diagram~\eqref{eq:transitionDia} can also be written in tangle form as
\begin{equation}\label{eq:transitionTangle}
    \begin{pic}[auto]
      \node (i1) at (1,0) {$T$};
      \node[2cell,minimum width=0cm] (i) at (0,0) {$i_2$};
      \node[2cell,minimum width=1.1cm] (psi) at (0.5,-0.8) {$\psi$};
      \node (o0) at (0,-1.6) {$T$};
      \node (o1) at (1,-1.6) {$\monD_1$};
      \draw (i1) -- (i1 |- psi.north);
      \draw (o0 |- i.south) to node {\tiny $\monD_2$} (o0 |- psi.north);
      \draw (o0 |- psi.south) -- (o0);
      \draw (o1 |- psi.south) -- (o1);
    \end{pic}
    =
    \begin{pic}[auto]
      \node (i0) at (0,0) {$T$};
      \node[2cell,minimum width=0cm] (i) at (1,0) {$i_1$};
      \node (o0) at (0,-1) {$T$};
      \node (o1) at (1,-1) {$\monD_1$};
      \draw (i0) -- (o0);
      \draw (o1 |- i.south) -- (o1);
    \end{pic}
  \text{, }
    \begin{pic}[auto]
      \node (i0) at (0,0) {$\monD_2$};
      \node (i1) at (1,0) {$\monD_2$};
      \node (i2) at (2,0) {$T$};
      \node[2cell,minimum width=1.1cm] (c) at (0.5,-0.7) {$c_2$};
      \node[2cell,minimum width=1.1cm] (psi) at (1.5,-1.5) {$\psi$};
      \node (o1) at (1,-2.2) {$T$};
      \node (o2) at (2,-2.2) {$\monD_1$};
      \draw (i0) -- (i0 |- c.north);
      \draw (i1) -- (i1 |- c.north);
      \draw (i2) -- (i2 |- psi.north);
      \draw (i1 |- c.south) to node {\tiny $\monD_2$} (i1 |- psi.north);
      \draw (o1 |- psi.south) -- (o1);
      \draw (o2 |- psi.south) -- (o2);
    \end{pic}
    =
    \begin{pic}[auto]
      \node (i0) at (0,0) {$\monD_2$};
      \node (i1) at (1,0) {$\monD_2$};
      \node (i2) at (2,0) {$T$};
      \node[2cell,minimum width=1.1cm] (psi1) at (1.5,-0.7) {$\psi$};
      \node[2cell,minimum width=1.1cm] (psi2) at (0.5,-1.5) {$\psi$};
      \node[2cell,minimum width=1.1cm] (c) at (1.5,-2.3) {$c_1$};
      \node (o0) at (0,-3.0) {$T$};
      \node (o1) at (1,-3.0) {$\monD_1$};
      \draw (i0) -- (i0 |- psi2.north);
      \draw (i1) -- (i1 |- psi1.north);
      \draw (i2) -- (i2 |- psi1.north);
      \draw (i1 |- psi1.south) to node {\tiny $T$} (i1 |- psi2.north);
      \draw (i2 |- psi1.south) to node {\tiny $\monD_1$} (i2 |- c.north);
      \draw (i1 |- psi2.south) to node {\tiny $\monD_1$} (i1 |- c.north);
      \draw (o0 |- psi2.south) -- (o0);
      \draw (o1 |- c.south) -- (o1);
    \end{pic}
\end{equation}
Our previous tangle diagrams have been within a 2-category $\catC$.
These here are different in that they are in the 3-category of 2-categories,
and the 3-cells -- which relate to 2-cells in individual 2-categories --
are thus inaccessible without developing a 3-dimensional calculus.
The tangle diagrams have shifted a dimension,
and much of what follows addresses the interplay between the two levels,
and using the 2-dimensional calculus at different levels to capture the whole system.

Given $\psi$, one defines $\overline{T}$ as
$\overline{T}(\monD_1 E\stackrel{c_1}{\rightarrow} E) =
(\monD_2 TE\stackrel{\psi E}{\rightarrow} T\monD_1 E \stackrel{Tc_1}{\rightarrow} TE)$
defining the algebra structure $c_2= c_1\circ\psi$ on $TE$.
We need a similar result for 2-categories and lax- or pseudo-algebras,
which can for the pseudo case be derived from
Marmolejo and Wood~\cite{MarmoWood:CohPsLW}.%
\footnote{The analogous result for lax
algebras was obtained by N. Gambino (private communication).}
That paper deals with pseudomonads, where the three monad equations
hold only up to isomorphism subject to coherence conditions.
Hence it already covers our case of 2-monads.
For two pseudomonads $\mathfrak{D}_i$ on 2-categories $\catD_i$ ($i=1,2$),
and a 2-functor $T \colon\catD_1 \to \catD_2$,
the paper defines the notion of
\emph{transition from $\mathfrak{D}_1$ to $\mathfrak{D}_2$ along $T$}
as a strong transformation (the ``pseudo'' generalization of a 2-natural transformation)
$\psi\colon \mathfrak{D}_2 T \to T \mathfrak{D}_1$
together with two invertible modifications that relax the commutativities in
diagram~\eqref{eq:transitionDia}, subject to some coherence conditions.
\cite{MarmoWood:CohPsLW} goes on to show that the existence of a transition along $T$
implies that $T$ lifts to the pseudoalgebra categories.
(The definition of pseudoalgebra for a pseudomonad appears
in~\cite{Marmolejo:DoctrinesAdjointString}.)

In our situation we have 2-monads, but still need to use pseudoalgebras in order to
connect with Street's result.
We can summarize the restricted results of~\cite{MarmoWood:CohPsLW} as follows.

\begin{definition}\label{def:transition}
  Let $\mathfrak{D}_i = (D_i,i_i,c_i)$ ($i=1,2$) be 2-monads on 2-categories $\catD_i$,
  and let $T \colon\catD_1 \to \catD_2$ be a 2-functor.
  Then a \emph{2-transition from $\mathfrak{D}_1$ to $\mathfrak{D}_2$ along $T$}
  is a 2-natural transformation $\psi\colon \mathfrak{D}_2 T \to T \mathfrak{D}_1$
  such that the equations~\eqref{eq:transitionTangle} hold.
\end{definition}

\begin{proposition}\label{prop:lifting}
  Let $\mathfrak{D}_i = (D_i,i_i,c_i)$ ($i=1,2$) be 2-monads on 2-categories $\catD_i$,
  let $T \colon\catD_1 \to \catD_2$ be a 2-functor,
  and let $\psi$ be a 2-transition from $\mathfrak{D}_1$ to $\mathfrak{D}_2$ along $T$.
  \begin{enumerate}
  \item
    $T$ lifts to a 2-functor between the pseudoalgebra categories,
    $\widehat{T}\colon \catD_1^{\mathfrak{D}_1} \to \catD_2^{\mathfrak{D}_2}$,
    such that diagram~\eqref{eq:lifting} commutes,
    with $\overline{T}$ replaced by $\widehat{T}$.
  \item
    $\widehat{T}$ preserve normality.
  \end{enumerate}
\end{proposition}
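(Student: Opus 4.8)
The plan is to follow the classical Applegate construction recalled above, transported to the pseudoalgebra setting as in~\cite{MarmoWood:CohPsLW}, noting first that a 2-transition in the sense of Definition~\ref{def:transition} is exactly the strict special case of a Marmolejo--Wood transition in which the two comparison modifications are taken to be identities. Given a $\mathfrak{D}_1$-pseudoalgebra with carrier $E$ and structure map $a\colon D_1E\to E$ (writing $a$ for what Definition~\ref{def:laxAlgebra} calls the structure map $c$, to keep it apart from the monad multiplications $c_i$), I would define $\widehat{T}$ to send it to the object with carrier $TE$ and structure map $a^T := Ta\circ\psi_E\colon D_2TE\to TE$, exactly mirroring the 1-categorical formula for $\overline{T}$.

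First I would build the two structure 2-cells. The unit equation of~\eqref{eq:transitionTangle}, evaluated at $E$, gives $\psi_E\circ i_2TE = T(i_1E)$, so $a^T\circ i_2TE = T(a\circ i_1E)$, and I set $\zeta^T := T\zeta$, a 2-cell $1_{TE}\Rightarrow a^T\circ i_2TE$. For the associativity 2-cell, expanding $a^T\circ D_2(a^T)$ and using 2-naturality of $\psi$ at $a$ together with the multiplication equation of~\eqref{eq:transitionTangle} shows that $a^T\circ D_2(a^T) = T(a\circ D_1a)\circ w$ and $a^T\circ c_2TE = T(a\circ c_1E)\circ w$ for the common 1-cell $w := \psi_{D_1E}\circ D_2\psi_E$; I then take $\theta^T := T\theta$ whiskered on the right by $w$. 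Both $\zeta^T$ and $\theta^T$ are invertible because $\zeta,\theta$ are and $T$ is a 2-functor (whiskering preserving invertibility). The bulk of the object-level work is to check the three pseudoalgebra axioms of Definition~\ref{def:laxAlgebra} for $(TE,a^T,\zeta^T,\theta^T)$: each follows by applying $T$ to the corresponding axiom for $(E,a,\zeta,\theta)$ and inserting the two equations of~\eqref{eq:transitionTangle} and the 2-naturality squares of $\psi$ as bookkeeping along the $\psi$-strands.

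On a pseudo-homomorphism $(f,\theta_f)\colon E\to E'$ I would set $\widehat{T}(f,\theta_f) := (Tf,\theta_{Tf})$, where $\theta_{Tf}$ is $T\theta_f$ whiskered by $\psi_E$, using 2-naturality of $\psi$ at $f$ to identify $T(D_1f)\circ\psi_E$ with $\psi_{E'}\circ D_2(Tf)$; again invertibility is preserved. A 2-cell between pseudo-homomorphisms lifts simply by applying $T$, and functoriality of $\widehat{T}$, together with the commutativity $U_2\widehat{T}=TU_1$ of~\eqref{eq:lifting}, is immediate since $\widehat{T}$ acts as $T$ on carriers and on underlying 1- and 2-cells. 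This establishes part~(1). Part~(2) is then immediate: a normalized pseudoalgebra has $\zeta=1$, whence $\zeta^T=T\zeta=1$, so $\widehat{T}$ preserves normality.

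The step I expect to be the main obstacle is the associativity coherence, axiom~(3) of Definition~\ref{def:laxAlgebra}, for $(TE,a^T,\zeta^T,\theta^T)$. This is genuinely three-dimensional: the image $T\theta$ of the original coherence must be made to interlock with two separate applications of the multiplication equation of~\eqref{eq:transitionTangle} and several 2-naturality squares of $\psi$, which in tangle terms amounts to sliding $\psi$-coupons past $T$-images of $\theta$-coupons. The cleanest route is to observe that these are precisely the coherence conditions verified in general in~\cite{MarmoWood:CohPsLW}, which in our strict setting (identity modifications) collapse to equalities; one may then either run the chase directly or simply invoke their lifting theorem in this restricted case.
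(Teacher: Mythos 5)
Your proposal is correct and follows essentially the same route as the paper: the lifted structure map is $Tc\circ\psi_E$, part (1) is ultimately delegated to Marmolejo--Wood (whose transitions specialize to 2-transitions when the comparison modifications are identities), and normality follows because the lifted unit 2-cell is $T\zeta$, which is an identity when $\zeta$ is. The only difference is that you spell out more of the object- and morphism-level construction for part (1), which the paper handles purely by citation.
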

\begin{proof}
  (1) is in~\cite{MarmoWood:CohPsLW}.
  For (2), let $(E,c,\zeta=1,\theta)$ be a normal $\mathfrak{D}_1$-pseudoalgebra as
  in Definition~\ref{def:laxAlgebra}.
  Its image under $\widehat{T}$ is carried by $TE$,
  its structure map is the bottom line in the following diagram,
  and its $\zeta$ is got by pasting the square and the triangle.
  The square commutes by~\eqref{eq:transitionDia},
  and $T\zeta$ is the identity because $E$ is normalized.
  \[
     \xymatrix{
       TE
         \ar@{->}[r]^{1}
         \ar@{->}[d]_{i_2 TE}
     & TE
         \ar@{->}[dr]^{1}
         \ar@{->}[d]_{Ti_1 E}
     \\
       \monD_2 TE
         \ar@{->}[r]_{\psi E}
     & T \monD_1 E
         \ar@{->}[r]_{Tc}
     & TE
   {\ar@{=>}^<<{\scriptstyle T\zeta} (29,-7); (25,-11)}
  }
  \]
\end{proof}
\section{The arrow category $\arrC$}\label{sec:arrowcat}
For the rest of the paper $\catC$ will be a representable 2-category.

In this section we summarize some general features of the arrow category $\arrC$.
An object of $\arrC$ is a bundle (an arbitrary morphism in $\catC$),
and a morphism $f$ is a commutative square in which the vertical arrows
are bundles, the domain and codomain of $f$.
We shall write $\upstairs{f}$ and $\downstairs{f}$ for the ``upstairs''
and ``downstairs'' parts of $f$, the horizontal arrows in the square.
Our general presumption is that $\catC$ is in fact a 2-category,
and then $\arrC$ is too.
We use similar notation for the components of a 2-cell $\alpha\colon f\rightarrow g$,
got as the results of applying the 2-functors
$\dom,\cod\colon \arrC \rightarrow \catC$.
The following diagram commutes at the 2-cell level,
in the sense that $p'\upstairs{\alpha} = \downstairs{\alpha}p$.
\[
  \xymatrix@R1.2cm@C1.2cm{
      E
      \ar@{->}[d]^{p}
      \ar@/^1pc/@{->}[r]^{\upstairs{f}}
      \ar@/_1pc/@{->}[r]_{\upstairs{g}}
     {\ar@{=>}^<<{\upstairs{\alpha}} (9,2); (9,-2)}
     &
      E^\prime
      \ar@{->}[d]^{p^\prime}
     \\
      B
      \ar@/^1pc/@{->}[r]^{\downstairs{f}}
      \ar@/_1pc/@{->}[r]_{\downstairs{g}}
     {\ar@{=>}^<<{\downstairs{\alpha}} (9,-15); (9,-19)}
     &
      B^\prime
  }
\]

For any 2-category $\catD$, a 2-functor $F \colon \catD \to \arrC$
can be understood as a pair of functors $\upstairs{F} = \dom F$
and $\downstairs{F}= \cod F$ from $\catD$ to $\catC$,
together with a 2-natural transformation $\stairs{F}\colon \upstairs{F} \to \downstairs{F}$.
Given two such 2-functors, $F$ and $G$,
a 2-natural transformation $\alpha\colon F \to G$ can be understood as a pair of
2-natural transformations
$\upstairs{\alpha} \colon \upstairs{F}\to \upstairs{G}$
and $\downstairs{\alpha} \colon \downstairs{F}\to \downstairs{G}$
such that $\stairs{G}\circ \upstairs{\alpha} = \downstairs{\alpha}\circ\stairs{F}$.
Thus a 2-natural transformation is a commutative square of 2-functors to $\catC$
and 2-natural transformations.

We note some basic 2-functors and 2-natural transformations associated with $\arrC$.
\begin{definition}\label{def:I}
  $\funI\colon \catC\to \arrC$ is given by
  $\upstairs{\funI} = \downstairs{\funI} = \Id_\catC$,
  with $\stairs{\funI}$ the identity 2-natural transformation.
  Thus $\funI B =
  \vcenter{\xymatrix{
    B \ar@{=}[d] \\
    B
  }}
  $.

  The 2-natural transformation $\cc\colon \Id_\arrC \to \funI\cod$
  is defined by
  \[
    \cc\left(
      \vcenter{\xymatrix{
        E \ar@{->}[d]^p \\
        B
      }}
    \right)
    =
      \vcenter{\xymatrix{
        E \ar@{->}[d]^p \ar@{->}[r]^p  &  B \ar@{=}[d]\\
        B \ar@{=}[r]                   &  B
      }}
  \]
\end{definition}

We note immediately a ``yanking'' formula, easily proved:
\begin{equation}\label{eq:ccyank}
  \begin{pic}[auto]
     \node (i0) at (0,0) {$\cod$};
     \node (i1) at (1,0) {};
     \node (o2) at (2,-1) {$\cod$};
     \node[2cell,minimum width=1.1cm] (cc) at (1.5,0) {$\cc$};
     \node[iso2cell,minimum width=1.1cm] (iso) at (0.5,-1) {};
     \draw (i0) -- (i0 |- iso.north);
     \draw (i1 |- cc.south) to node {\tiny $\funI$} (i1 |- iso.north);
     \draw (o2) -- (o2 |- cc.south);
  \end{pic}
  =
  \begin{pic}
     \node (i0) at (0,0) {$\cod$};
     \node (o0) at (0,-1) {$\cod$};
     \draw (i0) -- (o0);
  \end{pic}
\end{equation}

We next remark on the fact that $\cod \colon \arrC \to \catC$ is a bifibration
(fibration and opfibration) in the 2-category $\mathrm{Cat}$.
Here there are more concrete characterizations.

\begin{definition}\label{def:fibprone}
  Let $F\colon\catC\to\catD$ be a functor between categories.
  A morphism $f\colon X \to Y$ in $\catC$ is \emph{prone}%
  \footnote{We have adopted Paul Taylor's terms;
  prone and supine morphisms are also commonly called \emph{cartesian} and \emph{cocartesian}.}
  (with respect to $F$)
  if for every $g\colon Z \to Y$ such that $Fg$ factors via $Ff$,
  as $Fg = (Ff)h'$,
  there is a unique $h\colon Z \to X$ such that
  $g = fh$ and $Fh = h'$.
  \[
    \xymatrix@C=2cm{
      Z
        \ar@{.}[d]
        \ar@{.>}[r]_{\exists ! h}
        \ar@/^1pc/@{->}[rr]^{g}
      & X
        \ar@{->}[r]_{f}
        \ar@{.}[d]
      & Y
        \ar@{.}[d]
    \\
      FZ
        \ar@/_1pc/@{->}[rr]_{Fg}
        \ar@{->}[r]^{h'}
      & FX
        \ar@{->}[r]^{Ff}
      & FY
    }
  \]
  Dually, $f$ is \emph{supine}
  if for every $g\colon X \to Z$ such that $Fg$ factors via $Ff$,
  as $Fg = h'(Ff)$,
  there is a unique $h\colon Y \to Z$ such that
  $g = hf$ and $Fh = h'$.
\end{definition}

Then $F$ is a fibration if for every object $Y$ in $\catC$,
and every morphism $f'\colon X' \to F(Y)$ in $\catD$,
there is a prone morphism $f\colon X \to Y$ such that $Ff = f'$.
$F$ is an opfibration if for every object $Y$,
and every morphism $f'\colon X' \to FY$ in $\catD$,
there is a supine morphism $f \colon X \to Y$ such that $Ff = f'$.

For $\cod\colon\arrC\to\catC$ it is well known that a morphism of $\arrC$
is prone iff, as commutative square in $\catC$, it is a pullback;
and $\cod$ is a fibration if $\catC$ has pullbacks.

If in addition $\catC$ is (as in our situations) a representable 2-category,
then the pullbacks are 2-pullbacks, and the prone morphisms in $\arrC$
also allow lifting of 2-cells:
if we have $(g_i, h'_i)$ ($i = 1,2$) lifting to $h_i$,
and in addition we have $\alpha\colon g_1 \to g_2$ and $\beta'\colon h'_1\to h'_2$
such that $F\alpha = (Ff)\beta'$,
then there is a unique $\beta\colon h_1 \to h_2$ such that
$\alpha = f\beta$ and $F\beta = \beta'$.

It is also easy to show that $f$ is supine with respect to $\cod$
iff $\upstairs{f}$ is an isomorphism, and $\cod$ is always an opfibration.
Trivially, the supine morphisms allow lifting of 2-cells.

We can extend this to $\cod\colon 2\text{-fun}[\catD,\arrC]\to 2\text{-fun}[\catD,\catC]$.
The discussion at the start of this section shows that
$2\text{-fun}[\catD,\arrC]$ is isomorphic to $2\text{-fun}[\catD,\catC]^{\downarrow}$,
so we know that the prone morphisms in $2\text{-fun}[\catD,\arrC]$ correspond to the
pullback squares in $2\text{-fun}[\catD,\catC]$.

\begin{lemma}\label{lem:pullback}
  Let $v \colon \mathfrak{F} \rightarrow \mathfrak{G}$
  be a 2-natural transformations between 2-functors
  $\mathfrak{F}, \mathfrak{G} \colon \catD \to \arrC$,
  and suppose also that for each object $X$ of $\catD$ the square
  \[
    \xymatrix{
      {\upstairs{\mathfrak{F}}X}
            \ar@{->}[d]_{\stairs{\mathfrak{F}} (X)} \ar@{->}[r]^{\upstairs{v}X}
        & {\upstairs{\mathfrak{G}}X} \ar@{->}[d]^{\stairs{\mathfrak{G}} (X)} \\
      {\downstairs{\mathfrak{F}}X} \ar@{->}[r]^{\downstairs{v} X}
        &  \downstairs{\mathfrak{G}}X
              {\ar@{-}(6,-2); (6,-4)}
      {\ar@{-}(4,-4); (6,-4)}
    }
  \]
  is a pullback in $\catC$.

  Then the corresponding square of functors from $\catD$ to $\catC$ is a pullback
  in $2\text{-fun}[\catD,\catC]$, and hence $v$ is prone in $2\text{-fun}[\catD,\arrC]$.

\end{lemma}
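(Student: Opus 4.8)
The plan is to prove the two assertions in turn: first that the displayed square is a pullback in the ordinary category $2\text{-fun}[\catD,\catC]$, and then to read off proneness of $v$ for free. For the second step I would simply invoke the isomorphism $2\text{-fun}[\catD,\arrC]\cong 2\text{-fun}[\catD,\catC]^{\downarrow}$ recorded just above, together with the characterization (already noted for $\cod$) that a morphism of an arrow category is prone precisely when its associated square is a pullback. So all the content sits in the pullback claim, and there the natural strategy is to verify the universal property pointwise, since limits in functor categories are computed objectwise.

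Concretely, unwinding the dictionary of Section~\ref{sec:arrowcat}, the square to be tested has top $\upstairs{v}$, bottom $\downstairs{v}$ and sides $\stairs{\mathfrak{F}}, \stairs{\mathfrak{G}}$. I would take an arbitrary test cone --- a 2-functor $\mathfrak{H}\colon\catD\to\catC$ with 2-natural transformations $a\colon\mathfrak{H}\to\upstairs{\mathfrak{G}}$ and $b\colon\mathfrak{H}\to\downstairs{\mathfrak{F}}$ satisfying $\stairs{\mathfrak{G}}\circ a=\downstairs{v}\circ b$ --- and build the mediating transformation $u\colon\mathfrak{H}\to\upstairs{\mathfrak{F}}$ componentwise. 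At each object $X$ the hypothesis says the $X$-component square is a pullback in $\catC$, and $a_X,b_X$ form a compatible cone, so there is a unique 1-cell $u_X\colon\mathfrak{H}X\to\upstairs{\mathfrak{F}}X$ with $\upstairs{v}X\circ u_X=a_X$ and $\stairs{\mathfrak{F}}(X)\circ u_X=b_X$. These components are forced, which gives uniqueness of $u$ immediately.

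The real work --- and the step I expect to be the main obstacle --- is to check that the family $u=(u_X)_X$ is genuinely 2-natural, rather than just a compatible family of 1-cells. Naturality on a 1-cell $m\colon X\to Y$ is routine: the two candidate composites $u_Y\circ\mathfrak{H}m$ and $\upstairs{\mathfrak{F}}m\circ u_X$ agree after postcomposition with each of the projections $\upstairs{v}Y$ and $\stairs{\mathfrak{F}}(Y)$ (using the naturality of $a,b,\upstairs{v},\stairs{\mathfrak{F}}$), so 1-dimensional uniqueness at $Y$ forces them equal. The 2-cell clause is more delicate: for a 2-cell $\phi\colon m\Rightarrow n$ of $\catD$, the whiskered 2-cells $\upstairs{\mathfrak{F}}\phi\cdot u_X$ and $u_Y\cdot\mathfrak{H}\phi$ already share source and target by the 1-cell naturality just established, and I would compare them by whiskering along $\upstairs{v}Y$ and $\stairs{\mathfrak{F}}(Y)$, where the 2-naturality of $a,b,\upstairs{v},\stairs{\mathfrak{F}}$ again makes the two results coincide. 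To conclude equality of the original whiskerings I must use that $\catC$ is representable, so that the pointwise pullback is a 2-pullback and a 2-cell into $\upstairs{\mathfrak{F}}Y$ is determined by its whiskerings along the two projections --- this is exactly the 2-cell lifting property for prone morphisms recorded before the lemma. That representability input is what lifts the ordinary pointwise-pullback argument from ordinary categories into the 2-categorical setting, and is the crux of the proof.
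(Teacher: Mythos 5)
Your proposal is correct and follows essentially the same route as the paper's proof: construct the mediating 2-natural transformation componentwise from the pointwise pullbacks (which gives uniqueness immediately), then verify 1-cell and 2-cell naturality from the universal property. The paper dismisses the naturality check as ``routine,'' whereas you rightly flag that the 2-cell clause needs the pullbacks to be 2-pullbacks (i.e.\ representability of $\catC$), which is indeed the standing assumption the paper relies on.
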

\begin{proof}
  If $\mathfrak{H}$ is a 2-functor from $\catD$ to $\catC$, then
  constructing a pullback fill-in from $\mathfrak{H}$ to $\upstairs{\mathfrak{F}}$
  amounts to constructing the components for every $X$,
  which are determined uniquely by the pullback squares in $\catC$.
  Hence we have the uniqueness part of the pullback property.
  Proving that it gives a 2-natural transformation follows routinely from the pullback property.
\end{proof}

Proneness of $v$ in the above lemma amounts to the following:
2-natural transformations $u\colon\mathfrak{H}\to\mathfrak{F}$
are uniquely determined by $\downstairs{u}$ and $vu$.

\section{The monads $\monL_\bullet$ and $\monR_\bullet$ on $\arrC$}\label{sec:lbullet}
In this section we shall define two monads $\monL_\bullet$ and
$\monR_\bullet$ on the arrow 2-category $\arrC$ extending Street's
monads $\monL_B$ and $\monR_B$ on the 2-categories $\catC/B$. This will
allow us to have general base change morphisms for bundles, and not only
bundles over a fixed base $B$ as in the slice 2-category $\catC/B$.
We develop the theory for $\monL_\bullet$ as the $\monR_\bullet$ case
is obtained by working in $\catC^{co}$ with reversed 2-cells.

\begin{definition}\label{def:lbullet}
Let $\arrC$ be the arrow 2-category over the representable 2-category
$\catC$, and let $\monL_B \colon \catC/B\rightarrow \catC/B$ be the 2-monad
defined by Street. We define the 2-functor $\monL_\bullet \colon \arrC \rightarrow \arrC$
on the arrow 2-category as follows:
\begin{align}
  \monL_\bullet\left(
  \vcenter{\xymatrix@R1.2cm@C1.2cm{
      E
      \ar@{->}[d]^{p}
      \ar@/^1pc/@{->}[r]^{\upstairs{f}}
      \ar@/_1pc/@{->}[r]_{\upstairs{g}}
     {\ar@{=>}^<<{\upstairs{\alpha}} (9,2); (9,-2)}
     &
      E^\prime
      \ar@{->}[d]^{p^\prime}
     \\
      B
      \ar@/^1pc/@{->}[r]^{\downstairs{f}}
      \ar@/_1pc/@{->}[r]_{\downstairs{g}}
     {\ar@{=>}^<<{\downstairs{\alpha}} (9,-15); (9,-19)}
     &
      B^\prime
  }}
  \right)
  &=
  \vcenter{\xymatrix@R1.2cm@C1.2cm{
      \monL_B E
      \ar@{->}[d]^{d_1}
      \ar@/^1pc/@{->}[r]^{\monLof{f}}
      \ar@/_1pc/@{->}[r]_{\monLof{g}}
     {\ar@{=>}^<<{\monLof{\alpha}} (10,2); (10,-2)}
     &
      \monL_{B^\prime}E^\prime
      \ar@{->}[d]^{d_1  ^\prime}
     \\
      B
      \ar@/^1pc/@{->}[r]^{\downstairs{f}}
      \ar@/_1pc/@{->}[r]_{\downstairs{g}}
     {\ar@{=>}^<<{\downstairs{\alpha}} (10,-15); (10,-19)}
     &
      B^\prime
  }}
\end{align}
For 0-cells, $\monL_B E$ is as defined in Section~\ref{sec:LBandRB}.
On 1-cells, $\monLof{f}$ is uniquely defined
such that a), b) and c) hold:
\begin{align}\label{eq:nat1cells}
  \text{a):}
  \begin{pic}
     \node (i1) at (0,1) {$d_0^\prime$};
     \node (i2) at (1,1) {$\monLof{f}$};
     \node (o1) at (0,-1) {$\upstairs{f}$};
     \node (o2) at (1,-1) {$d_0$};
     \node[iso2cell] (iso) at (0.5,0) {};
     \draw (i1) -- (i1 |- iso.north);
     \draw (i2) -- (i2 |- iso.north);
     \draw (o1) -- (o1 |- iso.south);
     \draw (o2) -- (o2 |- iso.south);
  \end{pic}
;\quad
  \text{b):}
  \begin{pic}
     \node (i1) at (0,1) {$d_1^\prime$};
     \node (i2) at (1,1) {$\monLof{f}$};
     \node (o1) at (0,-1) {$\downstairs{f}$};
     \node (o2) at (1,-1) {$d_1$};
     \node[iso2cell] (iso) at (0.5,0) {};
     \draw (i1) -- (i1 |- iso.north);
     \draw (i2) -- (i2 |- iso.north);
     \draw (o1) -- (o1 |- iso.south);
     \draw (o2) -- (o2 |- iso.south);
  \end{pic}
;\quad
  \begin{pic}
     \node (i1) at (0,1) {$p^\prime$};
     \node (i2) at (1,1) {$d_0^\prime$};
     \node (i3) at (2,1) {$\monLof{f}$};
     \node[2cell,minimum width=1.1cm] (l) at (0.5,0) {$\lambda^\prime$};
     \node (o1) at (0.5,-1) {$d_1^\prime$};
     \node (o2) at (2,-1) {$\monLof{f}$};
     \draw (i1) -- (i1 |- l.north);
     \draw (i2) -- (i2 |- l.north);
     \draw (o1) -- (o1 |- l.south);
     \draw (i3) -- (o2);
  \end{pic}
  \stackrel{c)}{=}
  \begin{pic}[auto]
     \node (i1) at (0,1.25) {$p^\prime$};
     \node (i2) at (1,1.25) {$d_0^\prime$};
     \node (i3) at (2,1.25) {$\monLof{f}$};
     \node[iso2cell,minimum width=2.1cm] (iso1) at (1,0.6) {};
     \node[2cell,minimum width=1.1cm] (l) at (1.5,0) {$\lambda$};
     \node[iso2cell,minimum width=2.1cm] (iso2) at (1,-0.6) {};
     \node (o1) at (0.5,-1.25) {$d_1^\prime$};
     \node (o2) at (2,-1.25) {$\monLof{f}$};
     \draw (i1) to (i1 |- iso1.north);
     \draw (i1 |- iso1.south) to node {\tiny $\downstairs{f}$} (i1 |- iso2.north);
     \draw (o1) to (o1 |- iso2.south);
     \draw (i2) to (i2 |- iso1.north);
     \draw (i3) to (i3 |- iso1.north);
     \draw (i2 |- iso1.south) to node {\tiny $p$} (i2 |- l.north);
     \draw (i3 |- iso1.south) to node {\tiny $d_0$} (i3 |- l.north);
     \draw (o2 |- l.south) to node {\tiny $d_1$} (o2 |- iso2.north);
     \draw (o2 |- iso2.south) -- (o2);
  \end{pic}
  \text{.}
\end{align}

On 2-cells $\monLof{\alpha}$ is uniquely defined such
that d) and e) hold:
\begin{align}\label{eq:nat2cells}
  \begin{pic}
     \node (i1) at (0,1.25) {$d_0^\prime$};
     \node (i2) at (1,1.25) {$\monLof{f}$};
     \node (o1) at (0,-1.25) {$d_0^\prime$};
     \node (o2) at (1,-1.25) {$\monLof{g}$};
     \node[2cell] (a) at (1,0) {$\monLof{\alpha}$};
     \draw (i1) -- (o1);
     \draw (i2) -- (i2 |- a.north);
     \draw (o2 |- a.south) -- (o2);
  \end{pic}
  \,\,\stackrel{d)}{=}
  \begin{pic}[auto]
     \node (i0) at (0,0) {$d_0^\prime$};
     \node (i1) at (1,0) {$\monLof{f}$};
     \node[iso2cell,minimum width=1.1cm] (eq1) at (0.5,-0.7) {};
     \node[2cell,minimum width=0.5cm] (a) at (0,-1.5) {$\upstairs{\alpha}$};
     \node[iso2cell,minimum width=1.1cm] (eq2) at (0.5,-2.3) {};
     \node (o0) at (0,-3) {$d_0^\prime$};
     \node (o1) at (1,-3) {$\monLof{g}$};
     \draw (i0) -- (i0 |- eq1.north);
     \draw (i1) -- (i1 |- eq1.north);
     \draw (i0 |- eq1.south) to node {\tiny $\upstairs{f}$} (i0 |- a.north);
     \draw (i0 |- a.south) to node {\tiny $\upstairs{g}$} (i0 |- eq2.north);
     \draw (i1 |- eq1.south) to node {\tiny $d_0$} (i1 |- eq2.north);
     \draw (o0 |- eq2.south) to (o0);
     \draw (o1 |- eq2.south) to (o1);
  \end{pic}
  ;\qquad
  \begin{pic}
     \node (i1) at (0,1.25) {$d_1^\prime$};
     \node (i2) at (1,1.25) {$\monLof{f}$};
     \node (o1) at (0,-1.25) {$d_1^\prime$};
     \node (o2) at (1,-1.25) {$\monLof{g}$};
     \node[2cell] (a) at (1,0) {$\monLof{\alpha}$};
     \draw (i1) -- (o1);
     \draw (i2) -- (i2 |- a.north);
     \draw (o2 |- a.south) -- (o2);
  \end{pic}
  \,\,\stackrel{e)}{=}
  \begin{pic}[auto]
     \node (i0) at (0,0) {$d_1^\prime$};
     \node (i1) at (1,0) {$\monLof{f}$};
     \node[iso2cell,minimum width=1.1cm] (eq1) at (0.5,-0.7) {};
     \node[2cell,minimum width=0.5cm] (a) at (0,-1.5) {$\downstairs{\alpha}$};
     \node[iso2cell,minimum width=1.1cm] (eq2) at (0.5,-2.3) {};
     \node (o0) at (0,-3) {$d_1^\prime$};
     \node (o1) at (1,-3) {$\monLof{g}$};
     \draw (i0) -- (i0 |- eq1.north);
     \draw (i1) -- (i1 |- eq1.north);
     \draw (i0 |- eq1.south) to node {\tiny $\downstairs{f}$} (i0 |- a.north);
     \draw (i0 |- a.south) to node {\tiny $\downstairs{g}$} (i0 |- eq2.north);
     \draw (i1 |- eq1.south) to node {\tiny $d_1$} (i1 |- eq2.north);
     \draw (o0 |- eq2.south) to (o0);
     \draw (o1 |- eq2.south) to (o1);
  \end{pic}
  \text{.}
\end{align}
\end{definition}
Note that the object part of $\monL_\bullet$ on an object $(E,p \colon E \rightarrow B)$
is Street's monad $\monL_B$ on the slice $\catC/B$. The extension to
the arrow category includes base change.
\begin{proposition}
The map $\monL_\bullet \colon \arrC\rightarrow \arrC$ of Definition~\ref{def:lbullet}
is a 2-functor.
\end{proposition}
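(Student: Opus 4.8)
The plan is to derive everything from the universal property of the comma object $\monL_{B'}E' = p'/B'$ recorded in Definition~\ref{def:commaObject}, exploiting the uniqueness in its two universality clauses at the 1-cell and 2-cell levels. This universal property is strict: the span-arrow conditions in~\eqref{eq:span-arrow} are equalities, so an induced 1-cell composes with the projections $d_0',d_1'$ on the nose. This is what turns the iso-coupons in conditions a) and b) of~\eqref{eq:nat1cells} into genuine equalities $d_0'\,\monLof{f} = \upstairs f\,d_0$ and $d_1'\,\monLof{f} = \downstairs f\,d_1$, and lets us aim for a \emph{strict} 2-functor.

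First I would justify the action on 1-cells. Given $f$, apply the first universality clause of Definition~\ref{def:commaObject} (the bijection~\eqref{eq:comma1}) with span legs $u_0 = \upstairs f\,d_0$ and $u_1 = \downstairs f\,d_1$, and with $\sigma$ the pasting on the right-hand side of c) in~\eqref{eq:nat1cells}, built from $\lambda$ and the commutativity $p'\upstairs f = \downstairs f\,p$ of the square $f$. This produces a unique $\monLof{f}\colon p/B \to p'/B'$ satisfying a), b), c); and b) says exactly that $d_1'\,\monLof{f} = \downstairs f\,d_1$, so $\monLof{f}$ is a morphism of $\arrC$ lying over $\downstairs f$, whence in particular $\cod\,\monL_\bullet = \cod$.

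Next the action on 2-cells. Given $\alpha\colon f \to g$, I would set $\xi = \upstairs\alpha\,d_0$ and $\eta = \downstairs\alpha\,d_1$ and invoke the second universality clause (the one governed by~\eqref{eq:comma2}) to obtain the unique $\monLof{\alpha}$ with $d_0'\,\monLof{\alpha} = \xi$ and $d_1'\,\monLof{\alpha} = \eta$, that is, d) and e) of~\eqref{eq:nat2cells}. This is the one step that is not purely formal: one must verify that $\xi,\eta$ satisfy the hypothesis of~\eqref{eq:comma2}, namely that the two pastings there agree. Using the $\lambda$-compatibility c) for $\monLof{f}$ and $\monLof{g}$ and the interchange law, this reduces to the single identity $p'\upstairs\alpha = \downstairs\alpha\,p$ expressing that $\alpha$ is a 2-cell of $\arrC$. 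I expect this compatibility verification to be the main obstacle, the remainder being bookkeeping of the comma 2-cells $\lambda,\lambda'$ and the equalities coming from a), b).

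Finally, functoriality. Every clause — that $\monL_\bullet$ preserves identity and composite 1-cells, identity 2-cells, vertical composition, and whiskering — follows by the same pattern: form the candidate (e.g.\ $\monLof{g}\circ\monLof{f}$, or $1_{\monL_B E}$), whisker it by $d_0',d_1',\lambda'$, and observe that it satisfies the defining equations a)--c) or d)--e); uniqueness then forces equality. For instance $d_0''\,\monLof{g}\,\monLof{f} = \upstairs g\,d_0'\,\monLof{f} = \upstairs g\,\upstairs f\,d_0 = \upstairs{gf}\,d_0$, and similarly for $d_1''$ and $\lambda''$, so $\monLof{g}\,\monLof{f} = \monL_{\downstairs{gf}}\upstairs{gf}$. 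Since whiskering by $d_0',d_1'$ distributes over both vertical and horizontal composition of 2-cells, the 2-cell clauses are immediate from 2-dimensional uniqueness, yielding that $\monL_\bullet$ is a strict 2-functor.
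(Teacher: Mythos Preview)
Your proposal is correct and follows essentially the same route as the paper: verify that the candidate composite $\monLof{f'}\monLof{f}$ satisfies the defining conditions a), b), c) for $\monL_{\downstairs{f'f}}\upstairs{f'f}$ and invoke uniqueness from the comma-object universal property, with the 2-cell case handled analogously. The paper's proof is terser---it displays only the $\lambda''$ tangle computation and says functoriality on 2-cells is easier---while you also spell out the well-definedness of the 1- and 2-cell actions (which the paper folds into Definition~\ref{def:lbullet}), but the substance is the same.
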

%
\begin{proof}
For functoriality on 1-cells, suppose we have bundle morphisms
$f\colon (E,p) \rightarrow (E',p')$ and $f'\colon (E',p') \rightarrow (E'',p'')$.
We must show that
\[
  \begin{pic}[auto]
     \node (i0) at (0,1.5) {$p''$};
     \node (i1) at (1,1.5) {$d''_0$};
     \node (i2) at (2,1.5) {$\monLof{f'}$};
     \node (i3) at (3,1.5) {$\monLof{f}$};
     \node (o1) at (1,0) {$d''_1$};
     \node (o2) at (2,0) {$\monLof{f'}$};
     \node (o3) at (3,0) {$\monLof{f}$};
     \node[2cell,minimum width=1.1cm] (t) at (0.5,0.7) {$\lambda''$};
     \draw (i0) to (i0 |- t.north);
     \draw (i1) to (i1 |- t.north);
     \draw (i2) to (o2);
     \draw (i3) to (o3);
     \draw (o1 |- t.south) to (o1);
   \end{pic}
   =
  \begin{pic}[auto]
     \node (i0) at (0,0) {$p''$};
     \node (i1) at (1,0) {$d''_0$};
     \node (i2) at (2,0) {$\monLof{f'}$};
     \node (i3) at (3,0) {$\monLof{f}$};
     \node[iso2cell,minimum width=3.1cm] (ieq) at (1.5,-0.7) {};
     \node[2cell,minimum width=1.1cm] (t) at (2.5,-1.5) {$\lambda$};
     \node[iso2cell,minimum width=3.1cm] (oeq) at (1.5,-2.3) {};
     \node (o1) at (1,-3) {$d''_1$};
     \node (o2) at (2,-3) {$\monLof{f'}$};
     \node (o3) at (3,-3) {$\monLof{f}$};
     \draw (i0) to (i0 |- ieq.north);
     \draw (i1) to (i1 |- ieq.north);
     \draw (i2) to (i2 |- ieq.north);
     \draw (i3) to (i3 |- ieq.north);
     \draw (o1 |- oeq.south) to (o1);
     \draw (o2 |- oeq.south) to (o2);
     \draw (o3 |- oeq.south) to (o3);
     \draw (i0 |- ieq.south) to node {\tiny $\downstairs{f'}$} (i0 |- oeq.north);
     \draw (i1 |- ieq.south) to node {\tiny $\downstairs{f}$} (i1 |- oeq.north);
     \draw (i2 |- ieq.south) to node {\tiny $p$} (i2 |- t.north);
     \draw (i3 |- ieq.south) to node {\tiny $d_0$} (i3 |- t.north);
     \draw (i3 |- t.south) to node {\tiny $d_1$} (i3 |- oeq.north);
   \end{pic}
\]
This is straightforwardly shown by applying the definition of $\monLof{f}$ and
$\monLof{f'}$. Functoriality on 2-cells is easier.
\end{proof}

\begin{proposition}\label{prop:d0L}
  There is a 2-natural transformation $d_0 \colon \dom\monL_\bullet \to \dom$
  whose component at $p\colon E \to B$ is $d_0 \colon \monL_B E \to E$.
\end{proposition}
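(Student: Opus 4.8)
The plan is to take the components of $d_0$ to be the comma-object projections already present in the construction of $\monL_\bullet$, and then to observe that the two naturality requirements are nothing other than the defining clauses a) and d) of Definition~\ref{def:lbullet}. First I would pin down the types. On an object $p\colon E\to B$ the value $\monL_\bullet(E,p)$ is the bundle $(\monL_B E, d_1)$, so the source $2$-functor $\dom\monL_\bullet$ sends $p$ to $\monL_B E = p/B$, while the target $\dom$ sends it to $E$. The comma object $p/B$ carries its canonical projection $d_0\colon\monL_B E\to E$, and this is the proposed component; it manifestly has domain $\dom\monL_\bullet(p)$ and codomain $\dom(p)$, so the components are well defined by construction.

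Next I would verify naturality on $1$-cells. For a bundle morphism $f\colon(E,p)\to(E',p')$ the required square asserts $\upstairs{f}\circ d_0 = d_0'\circ\monLof{f}$, where $d_0'$ is the projection at $p'$ and $\monLof{f}=\dom\monL_\bullet(f)$. This is exactly clause a), which was one of the equations used to define $\monLof{f}$ in the first place, so it holds automatically. For $2$-naturality I would take a $2$-cell $\alpha\colon f\to g$ in $\arrC$, whose upstairs data are $\monLof{\alpha}\colon\monLof{f}\Rightarrow\monLof{g}$ and $\upstairs{\alpha}\colon\upstairs{f}\Rightarrow\upstairs{g}$. The condition to check is that the whiskered composite $d_0'\,\monLof{\alpha}$ equals the whiskered composite $\upstairs{\alpha}\,d_0$, the two being comparable once their sources and targets are identified through clause a); this is precisely clause d). Thus both naturality conditions are already built into the definition of $\monL_\bullet$, which is the whole point of having defined $\monLof{f}$ and $\monLof{\alpha}$ by the universal property of the comma objects.

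The only thing requiring care — rather than a genuine obstacle — is the bookkeeping of identity $2$-cells. Clauses a) and d) are written with the comma identity coupons (the iso-$2$-cells) inserted, so I would make explicit that these coupons are genuine identities, and hence that a) and d) reduce to the plain commuting square and the plain $2$-naturality equation for $d_0$, with no residual coherence isomorphism left over. Granting that $\monL_\bullet$ is a $2$-functor, which has just been established, this completes the verification that $d_0$ is a $2$-natural transformation.
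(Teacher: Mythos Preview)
Your proposal is correct and takes essentially the same approach as the paper: the paper's proof simply cites equations~\eqref{eq:nat1cells}(a) and~\eqref{eq:nat2cells}(d) from Definition~\ref{def:lbullet} as providing the 1-cell and 2-cell naturality conditions, which is exactly what you unpack in detail.
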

\begin{proof}
  The 2-naturality follows from equations~\eqref{eq:nat1cells}~(a)
  and~\eqref{eq:nat2cells}~(d).
\end{proof}

We now define the components of $i \colon 1\rightarrow \monL_\bullet$ and
$c \colon \monL_\bullet^2\rightarrow \monL_\bullet$ exactly as in
Definition~\ref{def:ic}.

\begin{proposition}
We obtain a 2-monad $(\monL_\bullet,i,c)$ on $\arrC$.
\end{proposition}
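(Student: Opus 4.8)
The plan is to reduce the whole statement to Street's theorem that each $(\monL_B,i,c)$ is a $2$-monad on $\catC/B$, using the componentwise structure of $\arrC$ described in Section~\ref{sec:arrowcat}. There a $2$-natural transformation between $2$-functors valued in $\arrC$ is the same thing as a compatible pair consisting of its $\dom$-part and its $\cod$-part. For both $i$ and $c$ the $\cod$-part is an identity: by construction $\cod\monL_\bullet = \cod$, and the equations $d_1 i = p$ and $d_1 c = d_1$ of Definition~\ref{def:ic} are precisely the statements that $\stairs{\monL_\bullet}$ is respected. Thus the only substantive content lies in the $\dom$-parts, whose components at an object $(E,p)$ are Street's $i\colon E\to\monL_B E$ and $c\colon\monL_B^2 E\to\monL_B E$ inside the slice $\catC/B$.

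Granting that $i$ and $c$ are genuine $2$-natural transformations on all of $\arrC$, the three monad equations come for free. Each is an equality of $2$-natural transformations, and two such are equal iff their components agree at every $(E,p)$; but each component lies entirely within $\catC/B$, where it is the corresponding component of Street's monad. Since Street has established the monad laws there, they hold componentwise, hence globally. So the work is entirely in the $2$-naturality across base change, which Street's slice-wise treatment does not address.

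To prove $i$ is $2$-natural I would test its naturality square against an arbitrary bundle morphism $f\colon(E,p)\to(E',p')$, and its $2$-cell condition against $\alpha$. The $\cod$-square is the trivial identity $\downstairs{f}=\downstairs{f}$, so only the $\dom$-square $\monLof{f}\circ i = i'\circ\upstairs{f}$ remains. Both sides are $1$-cells into $\monL_{B'}E' = p'/B'$, so by the comma-object universal property (Definition~\ref{def:commaObject}) it suffices to compare their composites with $d_0'$, with $d_1'$, and with $\lambda'$. These reduce, using equations~\eqref{eq:nat1cells}(a),(b),(c) together with $d_0 i = 1_E$ and $d_1 i = p$, to on-the-nose identities; the $2$-cell version uses~\eqref{eq:nat2cells}(d),(e) in the same way. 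As a guide the generalized-object picture $i(e)=(e,pe,1)$ shows at once what the two sides do. The argument for $c$ is structurally identical but reached through the nested comma structure of $\monL_\bullet^2$, so one unwinds the definition of $\monLof{f}$ at two levels and pastes the two copies of $\lambda$ from the lower half of Definition~\ref{def:ic}; here the generalized-object formula $c(e,b_1,b_2,\alpha,\alpha_1)=(e,b_2,\alpha_1\alpha)$ is the reassurance that base change commutes with composing the two classifying $2$-cells.

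The main obstacle is exactly this $2$-naturality of $c$ against base-change morphisms. Unlike $\monL_\bullet$ itself, the composite $\monL_\bullet^2$ does not decompose transparently, and verifying its naturality square forces one to reconcile two nested applications of the comma universal property with the single comma object downstream, keeping track of the hatted pulled-back arrows and of the coherence between the two $\lambda$'s. Once that bookkeeping is done, everything else is a routine application of the universal properties of Definition~\ref{def:commaObject} and the defining equations~\eqref{eq:nat1cells}--\eqref{eq:nat2cells} of $\monL_\bullet$.
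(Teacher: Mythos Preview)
Your proposal is correct and matches the paper's approach closely: the paper likewise observes that the monad equations hold slicewise by Street's result, so only the $2$-naturality of $i$ and $c$ across base change is at issue, and then carries out explicitly the tangle calculation for $c'\,\upstairs{\monL_\bullet^2 f} = \upstairs{\monL_\bullet f}\,c$ using the defining equations~\eqref{eq:nat1cells} for $\monLof{f}$ and the two-$\lambda$ description of $c$ from Definition~\ref{def:ic}. Your identification of the 2-naturality of $c$ as the main obstacle, and of the mechanism (unfolding $\monLof{f}$ at two levels and matching the pasted $\lambda$'s), is exactly what the paper does.
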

\begin{proof}
The monad equations for $i$ and $c$ are applied slicewise,
in other words through their restrictions to Street's monad $\monL_B$ on $\catC/B$,
and so are already known to hold.
It remains to show that $i,c$ are 2-natural
transformations on $\arrC$, hence work as expected under base change.
We give the proof for $c$, which is the more complicated:
we show that $c'\upstairs{\monL_\bullet^2 f} = \upstairs{\monL_\bullet f}c$.

\[
  \begin{pic}[auto]
     \node (i0) at (0,1.5) {$p^\prime$};
     \node (i1) at (1,1.5) {$d^\prime_0$};
     \node (i2) at (2,1.5) {$c^\prime$};
     \node (i3) at (3,1.5) {$\upstairs{\monL_\bullet ^2 f}$};
     \node (o1) at (1,0) {$d'_1$};
     \node (o2) at (2,0) {$c^\prime$};
     \node (o3) at (3,0) {$\upstairs{\monL_\bullet ^2 f}$};
     \node[2cell,minimum width=1.1cm] (t) at (0.5,0.7) {$\lambda'$};
     \draw (i0) -- (i0 |- t.north);
     \draw (i1) -- (i1 |- t.north);
     \draw (i2) -- (o2);
     \draw (i3) -- (o3);
     \draw (i1 |- t.south) to (o1);
  \end{pic}
  =
  \begin{pic}[auto]
     \node (i0) at (0,0) {$p^\prime$};
     \node (i1) at (1,0) {$d^\prime_0$};
     \node (i2) at (2,0) {$c^\prime$};
     \node (i3) at (3,0) {$\upstairs{\monL_\bullet ^2 f}$};
     \node[iso2cell,minimum width=2.1cm] (ieq) at (1,-0.7) {};
     \node[2cell,minimum width=1.1cm] (t1) at (0.5,-1.5) {$\lambda'$};
     \node[2cell,minimum width=1.1cm] (t2) at (1.5,-2.3) {$\lambda'$};
     \node[iso2cell,minimum width=1.1cm] (oeq) at (1.5,-3) {};
     \node (o1) at (1,-3.7) {$d'_1$};
     \node (o2) at (2,-3.7) {$c^\prime$};
     \node (o3) at (3,-3.7) {$\upstairs{\monL_\bullet ^2 f}$};
     \draw (i0) -- (i0 |- ieq.north);
     \draw (i1) -- (i1 |- ieq.north);
     \draw (i2) -- (i2 |- ieq.north);
     \draw (i3) -- (o3);
     \draw (i0 |- ieq.south) to node {\tiny $p'$} (i0 |- t1.north);
     \draw (i1 |- ieq.south) to node {\tiny $d'_0$} (i1 |- t1.north);
     \draw (i2 |- ieq.south) to node {\tiny $d'_0$} (i2 |- t2.north);
     \draw (i1 |- t1.south) to node {\tiny $d'_1$} (i1 |- t2.north);
     \draw (i2 |- t2.south) to node {\tiny $d'_1$} (i2 |- oeq.north);
     \draw (o1 |- oeq.south) to (o1);
     \draw (o2 |- oeq.south) to (o2);
  \end{pic}
  =
  \begin{pic}[auto]
     \node (i0) at (0,0) {$p^\prime$};
     \node (i1) at (1,0) {$d^\prime_0$};
     \node (i2) at (2,0) {$c^\prime$};
     \node (i3) at (3,0) {$\upstairs{\monL_\bullet ^2 f}$};
     \node[iso2cell,minimum width=3.1cm] (ieq) at (1.5,-0.7) {};
     \node[2cell,minimum width=1.1cm] (t1) at (0.5,-1.5) {$\lambda'$};
     \node[iso2cell,minimum width=2.1cm] (eq1) at (2,-2.3) {};
     \node[2cell,minimum width=1.1cm] (t2) at (1.5,-3.1) {$\lambda'$};
     \node[iso2cell,minimum width=2.1cm] (oeq) at (2,-3.9) {};
     \node (o1) at (1,-4.6) {$d'_1$};
     \node (o2) at (2,-4.6) {$c^\prime$};
     \node (o3) at (3,-4.6) {$\upstairs{\monL_\bullet ^2 f}$};
     \draw (i0) -- (i0 |- ieq.north);
     \draw (i1) -- (i1 |- ieq.north);
     \draw (i2) -- (i2 |- ieq.north);
     \draw (i3) -- (i3 |- ieq.north);
     \draw (i0 |- ieq.south) to node {\tiny $p'$} (i0 |- t1.north);
     \draw (i1 |- ieq.south) to node {\tiny $d'_0$} (i1 |- t1.north);
     \draw (i2 |- ieq.south) to node {\tiny $\upstairs{\monL_\bullet f}$} (i2 |- eq1.north);
     \draw (i3 |- ieq.south) to node {\tiny $d_0$} (i3 |- eq1.north);
     \draw (i1 |- t1.south) to node {\tiny $d'_1$} (i1 |- eq1.north);
     \draw (i1 |- eq1.south) to node {\tiny $d'_1$} (i1 |- t2.north);
     \draw (i2 |- eq1.south) to node {\tiny $d'_0$} (i2 |- t2.north);
     \draw (i3 |- eq1.south) to node {\tiny $\upstairs{\monL^2_\bullet f}$} (i3 |- oeq.north);
     \draw (i2 |- t2.south) to node {\tiny $d'_1$} (i2 |- oeq.north);
     \draw (o1 |- oeq.south) to (o1);
     \draw (o2 |- oeq.south) to (o2);
     \draw (o3 |- oeq.south) to (o3);
  \end{pic}
\]
\[
  =
  \begin{pic}[auto]
     \node (i0) at (0,0) {$p^\prime$};
     \node (i1) at (1,0) {$d^\prime_0$};
     \node (i2) at (2,0) {$c^\prime$};
     \node (i3) at (3,0) {$\upstairs{\monL_\bullet ^2 f}$};
     \node[iso2cell,minimum width=3.1cm] (ieq) at (1.5,-0.7) {};
     \node[2cell,minimum width=1.1cm] (t1) at (1.5,-1.5) {$\lambda$};
     \node[iso2cell,minimum width=0.5cm] (eq1) at (0,-1.9) {};
     \node[2cell,minimum width=1.1cm] (t2) at (2.5,-2.3) {$\lambda$};
     \node[iso2cell,minimum width=3.1cm] (oeq) at (1.5,-3) {};
     \node (o1) at (1,-3.7) {$d'_1$};
     \node (o2) at (2,-3.7) {$c^\prime$};
     \node (o3) at (3,-3.7) {$\upstairs{\monL_\bullet ^2 f}$};
     \draw (i0) -- (i0 |- ieq.north);
     \draw (i1) -- (i1 |- ieq.north);
     \draw (i2) -- (i2 |- ieq.north);
     \draw (i3) -- (i3 |- ieq.north);
     \draw (i0 |- ieq.south) to node {\tiny $\downstairs{f}$} (i0 |- eq1.north);
     \draw (i0 |- eq1.south) to node {\tiny $\downstairs{\monL_\bullet f}$}
                (i0 |- oeq.north);
     \draw (i1 |- ieq.south) to node {\tiny $p$} (i1 |- t1.north);
     \draw (i2 |- ieq.south) to node {\tiny $d_0$} (i2 |- t1.north);
     \draw (i3 |- ieq.south) to node {\tiny $d_0$} (i3 |- t2.north);
     \draw (i2 |- t1.south) to node {\tiny $d_1$} (i2 |- t2.north);
     \draw (i3 |- t2.south) to node {\tiny $d_1$} (i3 |- oeq.north);
     \draw (o1 |- oeq.south) to (o1);
     \draw (o2 |- oeq.south) to (o2);
     \draw (o3 |- oeq.south) to (o3);
  \end{pic}
  =
  \begin{pic}[auto]
     \node (i0) at (0,0) {$p^\prime$};
     \node (i1) at (1,0) {$d^\prime_0$};
     \node (i2) at (2,0) {$c^\prime$};
     \node (i3) at (3,0) {$\upstairs{\monL_\bullet ^2 f}$};
     \node[iso2cell,minimum width=3.1cm] (ieq) at (1.5,-0.7) {};
     \node[2cell,minimum width=1.1cm] (t) at (1.5,-1.5) {$\lambda$};
     \node[iso2cell,minimum width=3.1cm] (oeq) at (1.5,-2.3) {};
     \node (o1) at (1,-3) {$d'_1$};
     \node (o2) at (2,-3) {$c^\prime$};
     \node (o3) at (3,-3) {$\upstairs{\monL_\bullet ^2 f}$};
     \draw (i0) -- (i0 |- ieq.north);
     \draw (i1) -- (i1 |- ieq.north);
     \draw (i2) -- (i2 |- ieq.north);
     \draw (i3) -- (i3 |- ieq.north);
     \draw (i0 |- ieq.south) to node {\tiny $\downstairs{f}$} (i0 |- oeq.north);
     \draw (i1 |- ieq.south) to node {\tiny $p$} (i1 |- t.north);
     \draw (i2 |- ieq.south) to node {\tiny $d_0$} (i2 |- t.north);
     \draw (i3 |- ieq.south) to node {\tiny $c$} (i3 |- oeq.north);
     \draw (i2 |- t.south) to node {\tiny $d_1$} (i2 |- oeq.north);
     \draw (o1 |- oeq.south) to (o1);
     \draw (o2 |- oeq.south) to (o2);
     \draw (o3 |- oeq.south) to (o3);
  \end{pic}
  =
  \begin{pic}[auto]
     \node (i0) at (0,0) {$p^\prime$};
     \node (i1) at (1,0) {$d^\prime_0$};
     \node (i2) at (2,0) {$c^\prime$};
     \node (i3) at (3,0) {$\upstairs{\monL_\bullet ^2 f}$};
     \node[iso2cell,minimum width=3.1cm] (ieq) at (1.5,-0.7) {};
     \node[2cell,minimum width=1.1cm] (t) at (0.5,-1.5) {$\lambda'$};
     \node[iso2cell,minimum width=2.1cm] (oeq) at (2,-2.3) {};
     \node (o1) at (1,-3) {$d'_1$};
     \node (o2) at (2,-3) {$c^\prime$};
     \node (o3) at (3,-3) {$\upstairs{\monL_\bullet ^2 f}$};
     \draw (i0) -- (i0 |- ieq.north);
     \draw (i1) -- (i1 |- ieq.north);
     \draw (i2) -- (i2 |- ieq.north);
     \draw (i3) -- (i3 |- ieq.north);
     \draw (i0 |- ieq.south) to node {\tiny $p'$} (i0 |- t.north);
     \draw (i1 |- ieq.south) to node {\tiny $d'_0$} (i1 |- t.north);
     \draw (i2 |- ieq.south) to node {\tiny $\upstairs{\monL_\bullet f}$} (i2 |- oeq.north);
     \draw (i3 |- ieq.south) to node {\tiny $c$} (i3 |- oeq.north);
     \draw (i1 |- t.south) to node {\tiny $d'_1$} (i1 |- oeq.north);
     \draw (o1 |- oeq.south) to (o1);
     \draw (o2 |- oeq.south) to (o2);
     \draw (o3 |- oeq.south) to (o3);
  \end{pic}
\]
\end{proof}

\begin{proposition}\label{prop:Lbulletpsalg}
  Let $\catC$ be representable category.
  Then a 1-cell $p\colon E \rightarrow B$ is a pseudo-opfibration iff,
  as an object in $\arrC$, it can be equipped with pseudoalgebra structure
  $(c,\zeta,\theta)$ for $\monL_\bullet$ such that $\downstairs{c} = 1_B$.
  It is an opfibration iff, in addition,
  the pseudoalgebra structure can be chosen to be normalized ($\zeta$ is an identity 2-cell).
\end{proposition}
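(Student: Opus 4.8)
The plan is to exploit the fact, already visible in Definition~\ref{def:lbullet} and Definition~\ref{def:ic}, that $\monL_\bullet$ is \emph{fibred over} $\cod$: one has $\cod\monL_\bullet=\cod$ strictly, and the unit $i$ and multiplication $c$ have identity $\cod$-components (their downstairs parts are $1_B$). Thus $\cod\colon\arrC\to\catC$ is a strict morphism of 2-monads from $(\arrC,\monL_\bullet)$ to the identity 2-monad $\Id_\catC$, and on the strict fibre $\cod^{-1}(B)=\catC/B$ the monad $\monL_\bullet$ restricts to Street's $\monL_B$. Since a 2-cell of $\arrC$ is precisely a compatible pair $(\upstairs\alpha,\downstairs\alpha)$, every equation between 2-cells of $\arrC$ splits into its $\dom$- and $\cod$-projections; this is the device that lets me pass between $\monL_\bullet$-pseudoalgebras with $\downstairs c=1_B$ and $\monL_B$-pseudoalgebras, after which the statement is just Definition~\ref{def:01fibration}.

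For the direction sending a (pseudo-)opfibration to an $\monL_\bullet$-pseudoalgebra, I would start from an $\monL_B$-pseudoalgebra $(\upstairs c,\upstairs\zeta,\upstairs\theta)$ on $(E,p)$, all of whose data lives in the fibre $\catC/B$, and read it as $\arrC$-data by adjoining identity downstairs parts, $\downstairs c=1_B$ and $\downstairs\zeta=\downstairs\theta=1$. Because a 2-cell equation in $\arrC$ holds iff both projections hold, the pseudoalgebra axioms (1)--(3) of Definition~\ref{def:laxAlgebra} for $\monL_\bullet$ reduce to the trivial downstairs identities together with exactly Street's axioms upstairs, which hold by hypothesis. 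This yields an $\monL_\bullet$-pseudoalgebra with $\downstairs c=1_B$, normalized precisely when the original $\monL_B$-pseudoalgebra was, which disposes of the opfibration clause in this direction.

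Conversely, given an $\monL_\bullet$-pseudoalgebra $(c,\zeta,\theta)$ with $\downstairs c=1_B$, commutativity of the structure square in $\arrC$ forces $p\upstairs c=\downstairs c\,d_1=d_1$, so $\upstairs c\colon\monL_B E\to E$ is already a morphism over $B$ and is the candidate $\monL_B$-structure map. The one thing that is \emph{not} automatic is that the coherence cells are vertical: for $\upstairs\zeta,\upstairs\theta$ to be 2-cells of $\catC/B$ one needs $\downstairs\zeta=\downstairs\theta=1_{1_B}$. I expect \textbf{this to be the main obstacle}. Applying $\cod$ exhibits $(B,1_B,\downstairs\zeta,\downstairs\theta)$ as a pseudoalgebra for $\Id_\catC$ with strict structure map; since $\End(1_B)$ is a commutative monoid (Eckmann--Hilton) the unit axioms give $\downstairs\theta=\downstairs\zeta^{-1}$, but they do \emph{not} by themselves force $\downstairs\zeta=1$. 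I would resolve this by transport: the $\Id_\catC$-pseudoalgebra $(B,1_B,\downstairs\zeta,\downstairs\theta)$ is isomorphic to the trivial algebra, and transporting the $\monL_\bullet$-pseudoalgebra along a pseudoalgebra isomorphism carried by the identities $1_E,1_B$, which merely corrects the base components of $\zeta,\theta$ and leaves $\upstairs c$ unchanged, produces an equivalent structure with $\downstairs\zeta=\downstairs\theta=1$. Its $\dom$-projection is then a bona fide $\monL_B$-pseudoalgebra, so $p$ is a pseudo-opfibration. In the normalized case $\zeta=1$ in $\arrC$ already gives $\downstairs\zeta=1$ and $\upstairs\zeta=1$, so no transport is needed and one lands directly on a normalized $\monL_B$-pseudoalgebra, matching the opfibration clause. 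An alternative to the transport step is to feed the vertical structure map and the unit $\zeta$ directly into the Chevalley criterion (Proposition~\ref{prop:Chevalley}), exhibiting the required left adjoint to $\tilde p$ together with its unit; this route likewise reduces to trivializing the base component of that unit.
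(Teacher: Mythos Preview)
Your approach is the same as the paper's: once $\downstairs{c}=1_B$, the $\monL_\bullet$-pseudoalgebra data restricts to the fibre $\catC/B$, where it becomes an $\monL_B$-pseudoalgebra, and the result follows from Definition~\ref{def:01fibration} (equivalently Proposition~\ref{prop:Chevalley}). The paper's proof is two sentences to this effect.

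Your write-up is in fact more careful than the paper's on one point. The paper simply asserts that ``all the pseudoalgebra conditions then live entirely inside the fibre of $\arrC$ over $B$'', but as you note, $\downstairs{c}=1_B$ does not by itself force $\downstairs{\zeta}$ and $\downstairs{\theta}$ to be identity 2-cells on $1_B$; they are merely invertible endo-2-cells. Your transport argument (modifying the pseudoalgebra structure by an isomorphism so that the $\cod$-projection becomes the trivial $\Id_\catC$-algebra) is a correct way to close this, and your observation that the normalized case needs no such repair is also right: with $\downstairs{\zeta}=1$, the unit axiom~(1) applied downstairs collapses to $\downstairs{\theta}=1$, so $\upstairs{\theta}$ is already vertical. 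The paper elides this step; you have made it explicit.
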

\begin{proof}
  The condition $\downstairs{c} = 1_B$ is what is needed to have the
  $\monL_\bullet$-pseudoalgebra structure restrict to
  $\monL_B$-pseudoalgebra structure.
  All the pseudoalgebra conditions then live entirely inside the fibre of
  $\arrC$ over $B$,
  and we have reduced to Proposition~\ref{prop:Chevalley}.
\end{proof}

Dually, $p$ is a pseudofibration iff it has $\monR_\bullet$-pseudoalgebra structure
with $\downstairs{c} = 1_B$,
a fibration iff the pseudoalgebra can be chosen to be normalized.

\section{The 2-functors $\funK_n$}\label{sec:Kfunctor}

In this section we introduce a family of 2-functors $\funK_{n}\colon \arrC \rightarrow \arrC$.
Upstairs they are equal to $\monL_\bullet^{n}$, but their downstairs parts are not the identity.
The reason for doing this arises in Section~\ref{sec:fibpres},
where the definition of the natural transformation $\Psi_\bullet$ relies on an isomorphism
upstairs that does not correspond to one downstairs.
By modifying the downstairs part we can use natural isomorphisms between endofunctors on $\arrC$,
and this makes it much easier to calculate with the 2-dimensional calculus.

Note in the following that it is immediate from the definition that $\Phi B = \monL_B B$.
In fact, by composing the 2-functor $\upstairs{\monL_\bullet}\colon \arrC\rightarrow\catC$
with the 2-functor $\funI\colon\catC\rightarrow\arrC$ (Definition~\ref{def:I}),
we obtain the 2-functor $\Phi\colon \catC\rightarrow\catC$.
Now from the morphism
$\left(\begin{matrix} p \\ B \end{matrix}\right)\colon (E,p) \rightarrow (B,1)$
we get $\monL_B p\colon \monL_B E \rightarrow \Phi B$.

\begin{definition}\label{def:Kn}
For each non-negative integer $n$,
we define a 2-endofunctor $\funK_n \colon \arrC \rightarrow \arrC$.
As a 2-natural transformation between 2-functors to $\catC$, it is defined as
$\dom\monL_\bullet^n \cc$,
\[
  \funK_n = \begin{pic}[auto]
     \node (i0) at (0,0) {$\dom$};
     \node (i1) at (1,0) {$\monL_\bullet^n$};
     \node (o0) at (0,-1) {$\dom$};
     \node (o1) at (1,-1) {$\monL_\bullet^n$};
     \node (o2) at (2,-1) {$\funI$};
     \node (o3) at (3, -1) {$\cod$};
     \node[2cell,minimum width=1.1cm] (cc) at (2.5,0) {$\cc$};
     \draw (i0) -- (o0);
     \draw (i1) -- (o1);
     \draw (o2) -- (o2 |- cc.south);
     \draw (o3) -- (o3 |- cc.south);
  \end{pic}
\]
\end{definition}

With notation as in Definition~\ref{def:lbullet}, $\funK_n$ can be calculated as follows.
\begin{align}
  \funK_n\left(
  \vcenter{\xymatrix@R1.2cm@C1.2cm{
      E
      \ar@{->}[d]^{p}
      \ar@/^1pc/@{->}[r]^{\upstairs{f}}
      \ar@/_1pc/@{->}[r]_{\upstairs{g}}
     {\ar@{=>}^<<{\upstairs{\alpha}} (9,2); (9,-2)}
     &
      E^\prime
      \ar@{->}[d]^{p^\prime}
     \\
      B
      \ar@/^1pc/@{->}[r]^{\downstairs{f}}
      \ar@/_1pc/@{->}[r]_{\downstairs{g}}
     {\ar@{=>}^<<{\downstairs{\alpha}} (9,-15); (9,-19)}
     &
      B^\prime
  }}
  \right)
  &=
  \vcenter{\xymatrix@R1.2cm@C1.2cm{
      \monL_B^n E
      \ar@{->}[d]^{\monL_B^n p}
      \ar@/^1pc/@{->}[r]^{\monL_{\downstairs{f}}^n \upstairs{f}}
      \ar@/_1pc/@{->}[r]_{\monL_{\downstairs{g}}^n \upstairs{g}}
     {\ar@{=>}^<<{\monL_{\downstairs{\alpha}}^n \upstairs{\alpha}} (10,2); (10,-2)}
     &
      \monL_{B^\prime}^n E^\prime
      \ar@{->}[d]^{\monL_{B'}^n p'}
     \\
      \monL_B^n B
      \ar@/^1pc/@{->}[r]^{\monL_{\downstairs{f}}^n \downstairs{f}}
      \ar@/_1pc/@{->}[r]_{\monL_{\downstairs{g}}^n \downstairs{g}}
     {\ar@{=>}^<<{\monL_{\downstairs{\alpha}}^n \downstairs{\alpha}} (10,-16); (10,-20)}
     &
      \monL_{B^\prime}^n B^\prime
  }}
\end{align}

We shall in fact only need $\funK_1$ and $\funK_2$
(and note that $\funK_0$ is the identity endofunctor).
However, some results have uniform proofs for all $n$.

\begin{definition}\label{def:d0K}
  We define a 2-natural transformation $\dzeroK \colon \funK_{n+1} \to \funK_n$ from the following
  commutative square of 2-functors from $\arrC$ to $\catC$.
  \[
    \xymatrix@C=2cm{
      {\dom\monL_\bullet\monL_\bullet^n}
          \ar@{->}_{\dom\monL_\bullet\monL_\bullet^n \cc}[d]
          \ar@{->}^{d_0 \monL_\bullet^n}[r]
      & {\dom\monL_\bullet^n}
          \ar@{->}[d]_{\dom\monL_\bullet^n \cc} \\
      {\dom\monL_\bullet\monL_\bullet^n \funI\cod}
          \ar@{->}_{d_0\monL_\bullet^n \funI\cod}[r]
      &  {\dom\monL_\bullet^n \funI\cod}
    }
  \]
  (The $d_0$ in the diagram is that of Proposition~\ref{prop:d0L}.)
\end{definition}

\begin{proposition}\label{prop:LBp}
  In $\catC/B$ let $p \colon E \to B$ and $p' \colon E' \to B$ be objects,
  and let $k$ be a morphism from the first to the second over $B$.
  Then the following square is a pullback in $\catC$.
  \[
    \xymatrix{
      {\monL_B E} \ar@{->}[d]_{\monL_B k} \ar@{->}[r]^{d_0}
        & E \ar@{->}[d]^{k} \\
      {\monL_{B} E'} \ar@{->}[r]^{d'_0}  &  E'
      {\ar@{-}(6,-2); (6,-4)}
      {\ar@{-}(4,-4); (6,-4)}
    }
  \]
\end{proposition}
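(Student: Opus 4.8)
The plan is to prove that the square is a 2-pullback by working representably, testing against an arbitrary stage of definition $W$ and using the explicit description of the comma object $\monL_B E = p/B$ in terms of generalized objects. Since $\catC$ is representable, a commuting square in $\catC$ is a 2-pullback precisely when each hom-2-functor $\catC(W,-)$ carries it to a pullback of categories, so it suffices to show that for every object $W$ the induced square of functors is a (strict) pullback in $\mathbf{Cat}$. First I would record that the square commutes strictly: since $k$ lies over $B$ its downstairs part is $1_B$, so $\monL_B k = \monLof{k}$ is the action of $\monL_\bullet$ on the over-$B$ morphism $k$, and clause (a) of~\eqref{eq:nat1cells} reads exactly $k\,d_0 = d_0'\,\monL_B k$.

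Next I would unwind the four corners via generalized objects. By Definition~\ref{def:commaObject} (and the paragraph following it), an object of $\catC(W,\monL_B E)$ is a triple $(u_0,u_1,\sigma)$ with $u_0\colon W\to E$, $u_1\colon W\to B$ and $\sigma\colon p u_0 \to u_1$, and a morphism is a pair $(\xi,\eta)$ of 2-cells compatible with the $\sigma$'s. In these terms $\catC(W,d_0)$ sends $(u_0,u_1,\sigma)\mapsto u_0$, while $\catC(W,\monL_B k)$ sends $(u_0,u_1,\sigma)\mapsto(k u_0,u_1,\sigma)$, using $p'(k u_0)=p u_0$; the functors $\catC(W,k)$ and $\catC(W,d_0')$ are the evident projections. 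An object of the strict pullback in $\mathbf{Cat}$ is then a pair consisting of $u_0\colon W\to E$ together with a triple $(v_0,v_1,\tau)$ satisfying $k u_0 = v_0$; eliminating $v_0$, this is precisely a triple $(u_0,v_1,\tau)$ with $\tau\colon p u_0\to v_1$, that is, an object of $\catC(W,\monL_B E)$. Hence the comparison functor is a bijection on objects.

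The remaining work, and the main obstacle, is the morphism level: I would check that a compatible pair $(\xi,\eta)$ in $\catC(W,\monL_B E)$ corresponds bijectively to a morphism of the pullback, which reduces to the identity $p'(k\xi)=p\xi$ and the second (2-dimensional) universal property~\eqref{eq:comma2} of the comma object. This makes the comparison functor an isomorphism of categories for each $W$, and by representability the original square is a 2-pullback in $\catC$. The same conclusion can be reached more structurally, by forming the pullback of $d_0'\colon p'/B\to E'$ along $k$ and verifying directly that it carries the data of a comma object for $p=p'k$; uniqueness of comma objects then identifies it with $\monL_B E$ compatibly with the projections. Either way the substance is the routine bookkeeping of generalized morphisms, and there is no genuine difficulty beyond keeping the two halves of the comma universal property straight.
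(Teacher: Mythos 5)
Your proposal is correct, and it ends up proving slightly more than the paper does. The paper's own proof stays inside $\catC$ and argues directly with universal properties: it records the defining clauses of $\monL_B k$ (your $k\,d_0 = d'_0\,\monL_B k$, together with $d'_1\,\monL_B k = d_1$ and the $\lambda$-compatibility, i.e.\ clauses a), b), c) of Definition~\ref{def:lbullet}), and then, given a test cone $f\colon X\to E$, $g\colon X\to\monL_B E'$ with $kf = d'_0 g$, shows that the fill-in conditions $d_0h=f$, $(\monL_B k)h=g$ are equivalent to conditions phrased purely in terms of the comma object $\monL_B E$ (namely $d_0h=f$, $d_1h=d'_1g$, and a $\lambda$-compatibility), so that the first universality clause of Definition~\ref{def:commaObject} produces a unique $h$; the comma-object property of $\monL_B E'$ is then used to recover $(\monL_B k)h=g$. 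Your object-level bijection $(u_0,(ku_0,v_1,\tau))\leftrightarrow(u_0,v_1,\tau)$ is exactly that construction of $h$, rewritten in generalized elements, so at the 1-dimensional level the two arguments have the same mathematical content. What your packaging buys is the 2-dimensional statement: by matching morphisms of the hom-categories as well (via the second universality clause~\eqref{eq:comma2}, applied to both comma objects), you show the square is a genuine 2-pullback, whereas the paper verifies only the 1-dimensional universal property and leaves the upgrade to the general remark in Section~\ref{sec:arrowcat} that in a representable 2-category pullbacks are 2-pullbacks; what the paper's packaging buys is never having to handle 2-cells. Two small corrections to your write-up: the equivalence between ``2-pullback'' and ``every $\catC(W,-)$ sends the square to a pullback in $\mathrm{Cat}$'' is the \emph{definition} of a strict 2-pullback and owes nothing to representability of $\catC$ (representability only guarantees such limits exist); and the assertion that $\catC(W,\monL_B k)$ leaves the 2-cell component $\sigma$ untouched is not mere bookkeeping but is precisely clause c) of equation~\eqref{eq:nat1cells}, which you should cite there. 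Your closing structural alternative is also sound and is arguably the cleanest route of all: since comma objects arise by span composition, $\monL_B E$ and $\monL_B E'$ are the 2-pullbacks of $d_0\colon\Phi B\to B$ along $p$ and $p'$ respectively, and since $p=p'k$ the displayed square becomes an instance of the pasting/cancellation law for pullbacks.
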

\begin{proof}
  First, note that $\monL_B k$ is uniquely determined by the properties that
  $d'_0 (\monL_B k) = k d_0$, $d'_1 (\monL_B k) = d_1$, and
  \begin{equation}\label{eq:LBp1}
    \begin{pic}
      \node (i0) at (0,0) {$p'$};
      \node (i1) at (1,0) {$d'_0$};
      \node (i2) at (2,0) {$\monL_B k$};
      \node[2cell,minimum width=1.1cm] (lambda) at (0.5,-0.7) {$\lambda'$};
      \node (o1) at (1,-1.4) {$d'_1$};
      \node (o2) at (2,-1.4) {$\monL_B k$};
      \draw (i0) -- (i0 |- lambda.north);
      \draw (i1) -- (i1 |- lambda.north);
      \draw (i1 |- lambda.south) -- (o1);
      \draw (i2) -- (o2);
    \end{pic}
    =
    \begin{pic}[auto]
      \node (i0) at (0,0) {$p'$};
      \node (i1) at (1,0) {$d'_0$};
      \node (i2) at (2,0) {$\monL_B k$};
      \node[iso2cell,minimum width=2.1cm] (iso1) at (1,-0.7) {};
      \node[2cell,minimum width=1.1cm] (lambda) at (0.5,-1.4) {$\lambda$};
      \node[iso2cell,minimum width=1.1cm] (iso2) at (1.5,-2.1) {};
      \node (o1) at (1,-2.8) {$d'_1$};
      \node (o2) at (2,-2.8) {$\monL_B k$};
      \draw (i0) -- (i0 |- iso1.north);
      \draw (i1) -- (i1 |- iso1.north);
      \draw (i2) -- (i2 |- iso1.north);
      \draw (i0 |- iso1.south) to node {\tiny $p$} (i0 |- lambda.north);
      \draw (i1 |- iso1.south) to node {\tiny $d_0$} (i1 |- lambda.north);
      \draw (i1 |- lambda.south) to node {\tiny $d_1$} (i1 |- iso2.north);
      \draw (i1 |- iso2.south) -- (o1);
      \draw (i2 |- iso2.south) -- (o2);
    \end{pic}
  \end{equation}

  Now suppose we have an object $X$ with morphisms $f\colon  X \rightarrow E$,
  $g\colon X \rightarrow \monL_{B} E'$ such that $kf = d'_0 g$.
  We require a unique morphism $h\colon X \to \monL_B E$ satisfying the pullback conditions
  $d_0 h = f$ and $(\monL_B k)h = g$.
  We shall show that these conditions are equivalent to the conditions
  \begin{subequations} \label{eq:LBp2}
    \begin{align}
      & d_0 h = f        \label{eq:LBp2a}\\
      & d_1 h = d'_1 g   \label{eq:LBp2b}\\
      & \begin{pic}
      \node (i0) at (0,0) {$p'$};
      \node (i1) at (1,0) {$d'_0$};
      \node (i2) at (2,0) {$g$};
      \node[2cell,minimum width=1.1cm] (lambda) at (0.5,-0.7) {$\lambda'$};
      \node (o1) at (1,-1.4) {$d'_1$};
      \node (o2) at (2,-1.4) {$g$};
      \draw (i0) -- (i0 |- lambda.north);
      \draw (i1) -- (i1 |- lambda.north);
      \draw (i1 |- lambda.south) -- (o1);
      \draw (i2) -- (o2);
    \end{pic}
    =
    \begin{pic}[auto]
      \node (i0) at (0,0) {$p'$};
      \node (i1) at (1,0) {$d'_0$};
      \node (i2) at (2,0) {$g$};
      \node[iso2cell,minimum width=2.1cm] (iso1) at (1,-0.7) {};
      \node[2cell,minimum width=1.1cm] (lambda) at (0.5,-1.4) {$\lambda$};
      \node[iso2cell,minimum width=1.1cm] (iso2) at (1.5,-2.1) {};
      \node (o1) at (1,-2.8) {$d'_1$};
      \node (o2) at (2,-2.8) {$g$};
      \draw (i0) -- (i0 |- iso1.north);
      \draw (i1) -- (i1 |- iso1.north);
      \draw (i2) -- (i2 |- iso1.north);
      \draw (i0 |- iso1.south) to node {\tiny $p$} (i0 |- lambda.north);
      \draw (i1 |- iso1.south) to node {\tiny $d_0$} (i1 |- lambda.north);
      \draw (i2 |- iso1.south) to node {\tiny $h$} (i2 |- iso2.north);
      \draw (i1 |- lambda.south) to node {\tiny $d_1$} (i1 |- iso2.north);
      \draw (i1 |- iso2.south) -- (o1);
      \draw (i2 |- iso2.south) -- (o2);
    \end{pic}
                    \label{eq:LBp2c}
    \end{align}
  \end{subequations}
  Note that~\eqref{eq:LBp1} is just the case of~\eqref{eq:LBp2c}
  where $g$ is replaced by $\monL_B k$ and $h$ by the identity 1-cell.
  By inverting the identity 2-cells on the right hand side of~\eqref{eq:LBp2c}
  we see that the equations~\eqref{eq:LBp2}
  suffice to define $h$ uniquely using the fact that $\monL_B E$ is a comma object.

  Assuming the pullback equations, we can derive~\eqref{eq:LBp2} by substituting
  $g = (\monL_B k)h$ and using~\eqref{eq:LBp1}.
  For the converse,
  we use the fact that $\monL_{B} E'$ is a comma object to prove $(\monL_B k)h = g$.
  We have $d'_0 g = kf = k d_0 h = d'_0 (\monL_B k) h$
  and $d'_1 g = d_1 h = d'_1 (\monL_B k) h$.
  It remains to show $\lambda' g = \lambda' (\monL_B k) h$, for which we calculate
  \[
    \begin{pic}[auto]
      \node (i0) at (0,0) {$p'$};
      \node (i1) at (1,0) {$d'_0$};
      \node (i2) at (2,0) {$\monL_B k$};
      \node (i3) at (3,0) {$h$};
      \node[iso2cell,minimum width=2.1cm] (iso1) at (2,-0.7) {};
      \node[2cell,minimum width=1.1cm] (lambda) at (0.5,-1.4) {$\lambda'$};
      \node[iso2cell,minimum width=2.1cm] (iso2) at (2,-2.1) {};
      \node (o1) at (1,-2.8) {$d'_1$};
      \node (o2) at (2,-2.8) {$\monL_B k$};
      \node (o3) at (3,-2.8) {$h$};
      \draw (i0) -- (i0 |- lambda.north);
      \draw (i1) -- (i1 |- iso1.north);
      \draw (i2) -- (i2 |- iso1.north);
      \draw (i3) -- (i3 |- iso1.north);
      \draw (i1 |- iso1.south) to node {\tiny $d'_0$} (i1 |- lambda.north);
      \draw (i3 |- iso1.south) to node {\tiny $g$} (i3 |- iso2.north);
      \draw (i1 |- lambda.south) to node {\tiny $d'_1$} (i1 |- iso2.north);
      \draw (i1 |- iso2.south) -- (o1);
      \draw (i2 |- iso2.south) -- (o2);
      \draw (i3 |- iso2.south) -- (o3);
    \end{pic}
    =
    \begin{pic}[auto]
      \node (i0) at (0,0) {$p'$};
      \node (i1) at (1,0) {$d'_0$};
      \node (i2) at (2,0) {$\monL_B k$};
      \node (i3) at (3,0) {$h$};
      \node[iso2cell,minimum width=2.1cm] (iso1) at (1,-0.7) {};
      \node[2cell,minimum width=1.1cm] (lambda) at (0.5,-1.4) {$\lambda$};
      \node[iso2cell,minimum width=1.1cm] (iso2) at (1.5,-2.1) {};
      \node (o1) at (1,-2.8) {$d'_1$};
      \node (o2) at (2,-2.8) {$\monL_B k$};
      \node (o3) at (3,-2.8) {$h$};
      \draw (i0) -- (i0 |- iso1.north);
      \draw (i1) -- (i1 |- iso1.north);
      \draw (i2) -- (i2 |- iso1.north);
      \draw (i3) -- (o3);
      \draw (i0 |- iso1.south) to node {\tiny $p$} (i0 |- lambda.north);
      \draw (i1 |- iso1.south) to node {\tiny $d_0$} (i1 |- lambda.north);
      \draw (i1 |- lambda.south) to node {\tiny $d_1$} (i1 |- iso2.north);
      \draw (i1 |- iso2.south) -- (o1);
      \draw (i2 |- iso2.south) -- (o2);
    \end{pic}
    =
    \begin{pic}[auto]
      \node (i0) at (0,0) {$p'$};
      \node (i1) at (1,0) {$d'_0$};
      \node (i2) at (2,0) {$\monL_B k$};
      \node (i3) at (3,0) {$h$};
      \node[2cell,minimum width=1.1cm] (lambda') at (0.5,-1.4) {$\lambda'$};
      \node (o1) at (1,-2.8) {$d'_1$};
      \node (o2) at (2,-2.8) {$\monL_B k$};
      \node (o3) at (3,-2.8) {$h$};
      \draw (i0) -- (i0 |- lambda.north);
      \draw (i1) -- (i1 |- lambda.north);
      \draw (i2) -- (o2);
      \draw (i3) -- (o3);
      \draw (i1 |- lambda.south) -- (o1);
    \end{pic}
  \]
\end{proof}

\begin{corollary}\label{cor:d0Fprone}
  For any 2-functor $\mathfrak{F} \colon \catD \to \arrC$,
  we have that $\dzeroK\mathfrak{F}\colon\funK_{n+1}\mathfrak{F}\to\funK_n\mathfrak{F}$
  satisfies the conditions of Lemma~\ref{lem:pullback} and hence is prone.
\end{corollary}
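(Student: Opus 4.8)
The plan is to reduce the statement to a pointwise application of Proposition~\ref{prop:LBp} followed by an invocation of Lemma~\ref{lem:pullback}. Since $\dzeroK\mathfrak{F}$ is a 2-natural transformation $\funK_{n+1}\mathfrak{F}\to\funK_n\mathfrak{F}$ between 2-functors $\catD\to\arrC$, Lemma~\ref{lem:pullback} shows that proneness follows as soon as, for each object $X$ of $\catD$, the square built from the upstairs and downstairs components of $\dzeroK\mathfrak{F}$ and the internal (stairs) maps of $\funK_{n+1}\mathfrak{F}$ and $\funK_n\mathfrak{F}$ at $X$ is a pullback in $\catC$.

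First I would fix $X$ and write $\mathfrak{F}X = (E,p\colon E\to B)$. Using the calculation of $\funK_n$ following Definition~\ref{def:Kn}, the internal maps of $\funK_{n+1}\mathfrak{F}$ and $\funK_n\mathfrak{F}$ at $X$ are $\monL_B^{n+1}p$ and $\monL_B^n p$. Reading the components of $\dzeroK$ off Definition~\ref{def:d0K}, its upstairs part at $X$ is the comma projection $d_0\colon\monL_B^{n+1}E\to\monL_B^n E$ and its downstairs part is $d_0\colon\monL_B^{n+1}B\to\monL_B^n B$ (in each case the $d_0$ of Proposition~\ref{prop:d0L} taken at $\monL_\bullet^n\mathfrak{F}X$). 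Thus the square whose pullback property must be checked is
\[
  \xymatrix{
    {\monL_B^{n+1} E} \ar@{->}[d]_{\monL_B^{n+1} p} \ar@{->}[r]^{d_0}
      & {\monL_B^n E} \ar@{->}[d]^{\monL_B^n p} \\
    {\monL_B^{n+1} B} \ar@{->}[r]^{d_0}  &  {\monL_B^n B}
  }
\]

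The key observation is that $\monL_B^n E$ and $\monL_B^n B$ are objects of the slice $\catC/B$ (via the iterated $d_1$ projections to $B$) and that $\monL_B^n p$ is a morphism between them over $B$, because $\monL_B^n$ is an endofunctor of $\catC/B$. Hence I apply Proposition~\ref{prop:LBp} with its data instantiated as $\monL_B^n E$, $\monL_B^n B$ and $k=\monL_B^n p$; since $\monL_B(\monL_B^n p)=\monL_B^{n+1}p$ and the two horizontal legs are exactly the comma projections $d_0$, the pullback square it yields is precisely the one above. As this holds for every $X$, Lemma~\ref{lem:pullback} gives that $\dzeroK\mathfrak{F}$ is prone.

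The only genuine work is the bookkeeping of matching the upstairs/downstairs components produced by Definitions~\ref{def:Kn} and~\ref{def:d0K} against the data of Proposition~\ref{prop:LBp}; in particular one must confirm that the internal map of $\funK_n\mathfrak{F}$ evaluates to $\monL_B^n p$ and that this is genuinely a $\catC/B$-morphism, so that the proposition applies. Once this identification is made no further computation is needed, and the argument is uniform in $n$.
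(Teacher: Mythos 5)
Your proof is correct and is essentially the paper's own argument: both instantiate Proposition~\ref{prop:LBp} with the slice objects $\monL_\bullet^n\mathfrak{F}X$ and $\monL_\bullet^n\funI\cod\mathfrak{F}X$ over $B$ and the morphism $k=\monL_\bullet^n\cc\,\mathfrak{F}X$ (your $\monL_B^n p$), identify the resulting pullback square with the defining square of $\dzeroK$ at $\mathfrak{F}X$, and conclude by Lemma~\ref{lem:pullback}. The only (harmless) imprecision is your parenthetical that both components of $\dzeroK$ are the $d_0$ of Proposition~\ref{prop:d0L} ``taken at $\monL_\bullet^n\mathfrak{F}X$''; the downstairs one is the component at $\monL_\bullet^n\funI\cod\mathfrak{F}X$.
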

\begin{proof}
  Take Proposition~\ref{prop:LBp}, substituting $\monL_\bullet^n \mathfrak{F}X$ for $p$,
  $\monL_\bullet^n\funI\cod \mathfrak{F}X$ for $p'$, and $\monL_\bullet^n \cc\mathfrak{F}X$ for $k$.
  Then the pullback diagram there is that of Definition~\ref{def:d0K},
  applied to $\mathfrak{F}X$.
  We can now apply Lemma~\ref{lem:pullback}.
\end{proof}

\begin{definition}\label{def:d1K}
We define a 2-natural transformation $d_1 \colon \funK_{n+1} \to \funK_n \monL_\bullet$ as follows.
Consider a square of 2-endomorphisms on $\arrC$,
\begin{equation}\label{eq:d1K}
  \xymatrix{
      {\monL_\bullet} \ar@{->}_{\monL_\bullet \cc}[d] \ar@{=}[r]
      & {\monL_\bullet} \ar@{->}^{\cc \monL_\bullet}[d] \\
      {\monL_\bullet \funI\cod} \ar@{->}[r]  &  {\funI\cod\monL_\bullet}
  }
\end{equation}
The 2-natural transformation on the bottom is $\cc \monL_\bullet \funI \cod$:
\[
  \begin{pic}[auto]
     \node (i1) at (1,0.7) {};
     \node (i2) at (2,0.7) {$\monL_\bullet$};
     \node (i3) at (3,0.7) {$\funI$};
     \node (i4) at (4,0.7) {$\cod$};
     \node (o0) at (0,-0.7) {$\funI$};
     \node (o4) at (4,-0.7) {$\cod$};
     \node (o5) at (5,-0.7) {$\monL_\bullet$};
     \node[2cell,minimum width=1.1cm] (cc) at (0.5,0.7) {$\cc$};
     \node[iso2cell,minimum width = 1.1cm] (codL) at (1.5,0) {};
     \node[iso2cell,minimum width = 2.1cm] (codI) at (2,-0.7) {};
     \node[iso2cell,minimum width = 1.1cm] (codL2) at (4.5,0) {};
     \draw (o0) -- (o0 |- cc.south);
     \draw (i2) -- (i2 |- codL.north);
     \draw (i3) -- (i3 |- codI.north);
     \draw (i4) -- (i4 |- codL2.north);
     \draw (o4) -- (o4 |- codL2.south);
     \draw (o5) -- (o5 |- codL2.south);
     \draw (i1 |- cc.south) to node {\tiny $\cod$} (i1 |- codL.north);
     \draw (i1 |- codL.south) to node {\tiny $\cod$} (i1 |- codI.north);
  \end{pic}
\]
The square commutes, because, using equation~\eqref{eq:ccyank}, we have
\[
  \begin{pic}[auto]
     \node (i1) at (1,0.7) {};
     \node (i2) at (2,0.7) {$\monL_\bullet$};
     \node (i3) at (3,0.7) {};
     \node (o0) at (0,-0.7) {$\funI$};
     \node (o4) at (4,-0.7) {$\cod$};
     \node (o5) at (5,-0.7) {$\monL_\bullet$};
     \node[2cell,minimum width=1.1cm] (cc1) at (0.5,0.7) {$\cc$};
     \node[2cell,minimum width=1.1cm] (cc2) at (3.5,0.7) {$\cc$};
     \node[iso2cell,minimum width = 1.1cm] (codL1) at (1.5,0) {};
     \node[iso2cell,minimum width = 2.1cm] (codI) at (2,-0.7) {};
     \node[iso2cell,minimum width = 1.1cm] (codL2) at (4.5,0) {};
     \draw (o0) -- (o0 |- cc1.south);
     \draw (i2) -- (i2 |- codL.north);
     \draw (i3 |- cc2.south) to node {\tiny $\funI$} (i3 |- codI.north);
     \draw (o4 |- cc2.south) to node {\tiny $\cod$} (o4 |- codL2.north);
     \draw (o4) -- (o4 |- codL2.south);
     \draw (o5) -- (o5 |- codL2.south);
     \draw (i1 |- cc.south) to node {\tiny $\cod$} (i1 |- codL.north);
     \draw (i1 |- codL.south) to node {\tiny $\cod$} (i1 |- codI.north);
  \end{pic}
  =
  \begin{pic}[auto]
     \node (i2) at (2,0) {$\monL_\bullet$};
     \node (o0) at (0,-2.1) {$\funI$};
     \node (o1) at (1,-2.1) {$\cod$};
     \node (o2) at (2,-2.1) {$\monL_\bullet$};
     \node[2cell,minimum width=1.1cm] (cc) at (0.5,0) {$\cc$};
     \node[iso2cell,minimum width = 1.1cm] (codL1) at (1.5,-0.7) {};
     \node[iso2cell,minimum width = 1.1cm] (codL2) at (1.5,-1.4) {};
     \draw (o0) -- (o0 |- cc.south);
     \draw (i2) -- (i2 |- codL1.north);
     \draw (o1) -- (o1 |- codL2.south);
     \draw (o2) -- (o2 |- codL2.south);
     \draw (o1 |- cc.south) to node {\tiny $\cod$} (o1 |- codL1.north);
     \draw (o1 |- codL1.south) to node {\tiny $\cod$} (o1 |- codL2.north);
  \end{pic}
  =
  \begin{pic}[auto]
     \node (i2) at (2,0) {$\monL_\bullet$};
     \node (o0) at (0,-1) {$\funI$};
     \node (o1) at (1,-1) {$\cod$};
     \node (o2) at (2,-1) {$\monL_\bullet$};
     \node[2cell,minimum width=1.1cm] (cc) at (0.5,0) {$\cc$};
     \draw (o0) -- (o0 |- cc.south);
     \draw (o1) -- (o1 |- cc.south);
     \draw (o2) -- (i2);
  \end{pic}
\]
Applying $\dom\monL_\bullet^n$ to the whole square, we get a commutative square of 2-functors
from $\arrC$ to $\catC$ in which the left and right sides correspond to
$\funK_{n+1}$ and $\funK_n \monL_\bullet$ as 2-functors from $\arrC$ to itself.
Thus we have a 2-natural transformation $d_1 \colon \funK_{n+1} \to \funK_n \monL_\bullet$.
\end{definition}
The notation $d_1$ is used because its downstairs part,
applied to a bundle $p\colon E \to B$, is
$\monL_B^n d_1 \colon \monL_B^{n+1} B \to \monL_B^{n} B$.

\begin{lemma}\label{lem:d1d0}
  \[
    \begin{pic}[auto]
     \node (i0) at (0,0) {$\funK_{n+2}$};
     \node (i1) at (1,0) {};
     \node[2cell,minimum width=1.1cm] (d1) at (0.5,-0.7) {$d_1$};
     \node[2cell,minimum width=0cm] (d0) at (0, -1.5) {$\dzeroK$};
     \node (o0) at (0,-2.2) {$\funK_{n}$};
     \node (o1) at (1,-2.2) {$\monL_\bullet$};
     \draw (i0) -- (i0 |- d1.north);
     \draw (i0 |- d1.south) to node {\tiny $\funK_{n+1}$} (i0 |- d0.north);
     \draw (o0 |- d0.south) -- (o0);
     \draw (o1 |- d1.south) -- (o1);
    \end{pic}
    =
    \begin{pic}[auto]
     \node (i0) at (0,0) {$\funK_{n+2}$};
     \node (i1) at (1,0) {};
     \node[2cell,minimum width=0cm] (d0) at (0, -0.7) {$\dzeroK$};
     \node[2cell,minimum width=1.1cm] (d1) at (0.5,-1.5) {$d_1$};
     \node (o0) at (0,-2.2) {$\funK_{n}$};
     \node (o1) at (1,-2.2) {$\monL_\bullet$};
     \draw (i0) -- (i0 |- d0.north);
     \draw (i0 |- d0.south) to node {\tiny $\funK_{n+1}$} (i0 |- d1.north);
     \draw (o0 |- d1.south) -- (o0);
     \draw (o1 |- d1.south) -- (o1);
    \end{pic}
  \]
\end{lemma}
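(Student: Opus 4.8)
The plan is to use the isomorphism $2\text{-fun}[\arrC,\arrC]\cong 2\text{-fun}[\arrC,\catC]^{\downarrow}$ recorded at the start of Section~\ref{sec:arrowcat}. Both sides of the asserted equation are parallel 2-natural transformations $\funK_{n+2}\to\funK_n\monL_\bullet$, so it suffices to check that their $\dom$-parts (upstairs) coincide and that their $\cod$-parts (downstairs) coincide. I would treat the upstairs halves first, as they are trivial. The top edge of the defining square in Definition~\ref{def:d1K} is an identity, so for every $m$ the upstairs component of $d_1\colon\funK_{m+1}\to\funK_m\monL_\bullet$ is the identity on $\dom\monL_\bullet^{m+1}$; from the top edge of the square in Definition~\ref{def:d0K} the upstairs component of $\dzeroK\colon\funK_{m+1}\to\funK_m$ is $d_0\monL_\bullet^{m}$. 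Feeding these into the two composites, the upstairs part of each collapses to $d_0\monL_\bullet^{n+1}$, so the entire content of the lemma lies downstairs.

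For the downstairs halves I would evaluate at a bundle $p\colon E\to B$ and use the descriptions already recorded: the downstairs part of $\dzeroK$ at $p$ is $d_0\colon\monL_B^{m+1}B\to\monL_B^m B$ (Definition~\ref{def:d0K}), while the downstairs part of $d_1\colon\funK_{m+1}\to\funK_m\monL_\bullet$ at $p$ is $\monL_B^m d_1\colon\monL_B^{m+1}B\to\monL_B^m B$ (the remark following Definition~\ref{def:d1K}). Whiskering $\dzeroK$ by $\monL_\bullet$ just re-evaluates its downstairs component at $\monL_\bullet(E,p)=(\monL_B E,d_1)$, which still lies over $B$, so tracing the two sides through the base-change bookkeeping turns the left-hand composite into $d_0\circ\monL_B^{n+1}d_1$ and the right-hand composite into $(\monL_B^{n}d_1)\circ d_0$, both as maps $\monL_B^{n+2}B\to\monL_B^{n}B$.

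It then remains to prove $d_0\circ\monL_B^{n+1}d_1 = (\monL_B^{n}d_1)\circ d_0$. Setting $k=\monL_B^{n}d_1$, a morphism of $\catC/B$ from $\monL_B^{n+1}B$ to $\monL_B^{n}B$, we have $\monL_B^{n+1}d_1=\monL_B k$, so this identity is exactly the commutativity of the square in Proposition~\ref{prop:LBp} applied to $k$ (equivalently, the 2-naturality of $d_0$, Proposition~\ref{prop:d0L}). This is the only step carrying genuine content, and it is precisely the commutativity underlying the proneness of $\dzeroK$ established in Corollary~\ref{cor:d0Fprone}, so one could alternatively phrase the whole downstairs argument as an appeal to that proneness. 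I expect the main obstacle to be organisational rather than mathematical: keeping the indices and the whiskerings straight so that the two composites are correctly identified as $d_0\circ\monL_B k$ and $k\circ d_0$ before Proposition~\ref{prop:LBp} is invoked.
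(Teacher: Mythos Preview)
Your proposal is correct and follows the same overall strategy as the paper: verify the equality after composing with $\dom$ and with $\cod$, with the $\dom$ half trivial because $\dom d_1$ is an identity. The only difference is in the $\cod$ half: the paper unwinds the definitions at the level of the tangle calculus and shows that both sides reduce to the same explicit composite (involving $d_0$, $\cc$, and a few canonical identity 2-cells), whereas you evaluate componentwise at a bundle $p$ and recognise the resulting equation $d_0\circ\monL_B(\monL_B^{n}d_1)=(\monL_B^{n}d_1)\circ d_0$ as an instance of the naturality square of $d_0$ (Proposition~\ref{prop:d0L}) or equivalently the commuting square of Proposition~\ref{prop:LBp}. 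Both routes are straightforward; yours is slightly more concrete and makes explicit which naturality is doing the work, while the paper's keeps everything at the level of 2-natural transformations and avoids naming components.
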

\begin{proof}
  We must show that the equation holds when composed on the left with $\dom$ and with $\cod$.
  $\dom$ is easy, given that each $\dom d_1$ is an identity 2-cell,
  and $\dom \dzeroK \monL_\bullet = \dom \dzeroK$
  (with different values of $n$ for the two instances of $\dzeroK)$.
  $\cod$ is straightforward from the definitions.
  Modulo appropriate identity 2-cells at top and bottom,
  both sides reduce to
  \[
    \begin{pic}[auto]
     \node (i0) at (0,0) {$\dom$};
     \node (i1) at (1,0) {$\monL_\bullet$};
     \node (i2) at (2,0) {$\monL_\bullet^n$};
     \node (i3) at (3,0) {};
     \node (i4) at (4,0) {};
     \node (i5) at (5,0) {$\monL_\bullet$};
     \node (i6) at (6,0) {$\funI$};
     \node (i7) at (7,0) {$\cod$};
     \node (i8) at (8,0) {};
     \node[2cell,minimum width=1.1cm] (cc) at (3.5, 0) {$\cc$};
     \node[2cell,minimum width=1.1cm] (d0) at (0.5,-0.7) {$d_0$};
     \node[iso2cell,minimum width=2.1cm] (iso1) at (5,-0.7) {};
     \node[iso2cell,minimum width=1.1cm] (iso2) at (7.5,-0.7) {};
     \node (o0) at (0,-1.4) {$\dom$};
     \node (o2) at (2,-1.4) {$\monL_\bullet^n$};
     \node (o3) at (3,-1.4) {$\funI$};
     \node (o7) at (7,-1.4) {$\cod$};
     \node (o8) at (8,-1.4) {$\monL_\bullet$};
     \draw (i0) -- (i0 |- d0.north);
     \draw (i1) -- (i1 |- d0.north);
     \draw (i2) -- (o2);
     \draw (i5) -- (i5 |- iso1.north);
     \draw (i6) -- (i6 |- iso1.north);
     \draw (i7) -- (i7 |- iso2.north);
     \draw (i3 |- cc.south) -- (o3);
     \draw (i4 |- cc.south) to node {\tiny $\cod$} (i4 |- iso1.north);
     \draw (i0 |- d0.south) -- (o0);
     \draw (i7 |- iso2.south) -- (o7);
     \draw (i8 |- iso2.south) -- (o8);
    \end{pic}
    \text{.}
  \]
\end{proof}

The following lemma is used in Definition~\ref{def:Psi}.
\begin{lemma}\label{lem:d1Fsupine}
  Let $\mathfrak{F}$ be a 2-functor from $\catD$ to $\arrC$.
  Then $d_1 \mathfrak{F} \colon \funK_{n+1} \mathfrak{F} \to \funK_{n} \mathfrak{F}$
  is supine over $[\catD,\cod] \colon [\catD,\arrC] \to [\catD, \catC]$.
\end{lemma}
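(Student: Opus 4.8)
The plan is to reduce the claim to the characterization of supine morphisms recorded in Section~\ref{sec:arrowcat}: over $\cod\colon\arrC\to\catC$ a morphism is supine precisely when its upstairs part is an isomorphism, and, via the identification $[\catD,\arrC]\cong[\catD,\catC]^{\downarrow}$, the same criterion holds for $[\catD,\cod]$. Here ``isomorphism'' is meant in the underlying $1$-category $[\catD,\catC]$, i.e.\ an invertible $2$-natural transformation. Hence it suffices to show that $\upstairs{d_1\mathfrak{F}}$ is invertible.

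First I would reduce from $d_1\mathfrak{F}$ to $d_1$ itself. Since $d_1\mathfrak{F}$ is the whiskering of the $2$-natural transformation $d_1\colon\funK_{n+1}\to\funK_n\monL_\bullet$ by $\mathfrak{F}$ on the right, its upstairs part satisfies $\upstairs{d_1\mathfrak{F}}=\dom(d_1\mathfrak{F})=(\dom d_1)\mathfrak{F}=\upstairs{d_1}\,\mathfrak{F}$, so everything comes down to identifying $\upstairs{d_1}$. Next I would read this off Definition~\ref{def:d1K}, where $d_1$ is produced by applying the $2$-functor $\dom\monL_\bullet^n$ to the commutative square~\eqref{eq:d1K}: the top edge of that square becomes $\upstairs{d_1}$, while the upstairs $2$-functors of source and target both equal $\dom\monL_\bullet^{n+1}$, so an identity there is well typed. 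The top edge of~\eqref{eq:d1K} is literally an equality, and $\dom\monL_\bullet^n$, being a $2$-functor, preserves identities; hence $\upstairs{d_1}$ is the identity $2$-natural transformation on $\dom\monL_\bullet^{n+1}$.

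Combining these, $\upstairs{d_1\mathfrak{F}}=\upstairs{d_1}\,\mathfrak{F}$ is again an identity, in particular an isomorphism, and the cited criterion then yields that $d_1\mathfrak{F}$ is supine. The only step demanding any care is the extension of the ``supine iff upstairs is an isomorphism'' criterion from $\cod$ to $[\catD,\cod]$; but this is exactly dual to the proneness statement already obtained in Section~\ref{sec:arrowcat} from $[\catD,\arrC]\cong[\catD,\catC]^{\downarrow}$, so it poses no real obstacle, and the remainder of the argument is the one-line observation that the relevant upstairs $2$-cell is an identity.
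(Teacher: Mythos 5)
Your proof is correct and is essentially the paper's own argument: both rest on the observation that the top edge of diagram~\eqref{eq:d1K} is an identity, so the upstairs part of $d_1$ (hence of $d_1\mathfrak{F}$) is an isomorphism, combined with the criterion that a morphism is supine over the codomain functor iff its upstairs part is invertible. You are merely more explicit than the paper's one-line proof about transporting that criterion along $[\catD,\arrC]\cong[\catD,\catC]^{\downarrow}$ to the 2-functor category, which is a fair point of care but not a different route.
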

\begin{proof}
  In diagram~\eqref{eq:d1K}, the top arrow is an isomorphism,
  so each component of $d_1$ is a supine morphism in $\arrC$.
\end{proof}

\begin{definition}
  Let $\alpha \colon \monL_\bullet^m \to \monL_\bullet^n$ be a 2-natural transformation.
  Consider the square of 2-functors
  \[
    \xymatrix@C=2cm{
      {\dom\monL_\bullet^m} \ar@{->}[d]_{\dom\monL_\bullet^m\cc} \ar@{->}[r]^{\dom\alpha}
           & {\dom\monL_\bullet^n} \ar@{->}[d]^{\dom\monL_\bullet^n\cc} \\
      {\dom\monL_\bullet^m \funI\cod} \ar@{->}[r]_{\dom\alpha\funI\cod}
           &  {\dom\monL_\bullet^n \funI\cod}
    }
  \]
  It commutes, and so defines a 2-natural transformation $\alpha \colon \funK_m \to \funK_n$.
\end{definition}

We use this to define 2-natural transformations $i\colon \Id_\arrC \to \funK_1$
and $c \colon \funK_2 \to \funK_1$.

\begin{lemma}\label{lem:cid0d1}
  \[
    \begin{pic}[auto]
      \node (i0) at (0,0) {$\funK_2$};
      \node[2cell,minimum width=0cm] (c) at (0,-0.7) {$c$};
      \node[2cell,minimum width=0cm] (d0) at (0, -1.5) {$\dzeroK$};
      \draw (i0) -- (i0 |- c.north);
      \draw (i0 |- c.south) to node {\tiny $\funK_1$} (i0 |- d0.north);
    \end{pic}
    \stackrel{\text{(a)}}{=}
    \begin{pic}[auto]
      \node (i0) at (0,0) {$\funK_2$};
      \node[2cell,minimum width=0cm] (d01) at (0,-0.7) {$\dzeroK$};
      \node[2cell,minimum width=0cm] (d02) at (0, -1.5) {$\dzeroK$};
      \draw (i0) -- (i0 |- d01.north);
      \draw (i0 |- d01.south) to node {\tiny $\funK_1$} (i0 |- d02.north);
    \end{pic}
    \text{, }
    \begin{pic}[auto]
      \node (i0) at (0,0) {$\funK_2$};
      \node[2cell,minimum width=0cm] (c) at (0,-0.7) {$c$};
      \node[2cell,minimum width=0cm] (d1) at (0, -1.5) {$d_1$};
      \node (o0) at (0,-2.2) {$\monL_\bullet$};
      \draw (i0) -- (i0 |- c.north);
      \draw (o0 |- c.south) to node {\tiny $\funK_1$} (o0 |- d1.north);
      \draw (o0 |- d1.south) -- (o0);
    \end{pic}
    \stackrel{\text{(b)}}{=}
    \begin{pic}[auto]
      \node (i0) at (0,0) {$\funK_2$};
      \node (i1) at (1,0) {};
      \node[2cell,minimum width=1.1cm] (d11) at (0.5,-0.7) {$d_1$};
      \node[2cell, minimum width = 0cm] (d12) at (0, -1.5) {$d_1$};
      \node[2cell,minimum width=1.1cm] (c) at (0.5,-2.3) {$c$};
      \node (o0) at (0,-3) {$\monL_\bullet$};
      \draw (i0) -- (i0 |- d11.north);
      \draw (i0 |- d11.south) to node {\tiny $\funK_1$} (i0 |- d12.north);
      \draw (i0 |- d12.south) to node {\tiny $\monL_\bullet$} (i0 |- c.north);
      \draw (i1 |- d11.south) to node {\tiny $\monL_\bullet$} (i1 |- c.north);
      \draw (i0 |- c.south) -- (o0);
    \end{pic}
    \text{, }
    \begin{pic}[auto]
      \node (i0) at (0,0) {};
      \node[2cell,minimum width=0cm] (i) at (0,0) {$i$};
      \node[2cell,minimum width=0cm] (d0) at (0, -0.8) {$\dzeroK$};
      \draw (i0 |- i.south) to node {\tiny $\funK_1$} (i0 |- d0.north);
    \end{pic}
    \stackrel{\text{(c)}}{=}
    \quad
    \text{, }
    \begin{pic}[auto]
      \node (i0) at (0,0) {};
      \node[2cell,minimum width=0cm] (i) at (0,0) {$i$};
      \node[2cell,minimum width=0cm] (d1) at (0, -0.8) {$d_1$};
      \node (o0) at (0,-1.5) {$\monL_\bullet$};
      \draw (i0 |- i.south) to node {\tiny $\funK_1$} (i0 |- d1.north);
      \draw (i0 |- d1.south) -- (o0);
    \end{pic}
    \stackrel{\text{(d)}}{=}
    \begin{pic}[auto]
      \node (i0) at (0,0) {};
      \node[2cell,minimum width=0cm] (i) at (0,0) {$i$};
      \node (o0) at (0,-0.7) {$\monL_\bullet$};
      \draw (i0 |- i.south) -- (o0);
    \end{pic}
    \text{.}
  \]
  (The right-hand side of (c) is the empty diagram,
  i.e. an identity 2-cell on an identity 1-cell.)
\end{lemma}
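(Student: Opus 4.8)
The plan is to reuse the reduction principle already exploited in the proof of Lemma~\ref{lem:d1d0}: a $2$-natural transformation between $2$-functors valued in $\arrC$ is completely determined by its two components under $\dom$ and $\cod$ (this is the observation opening Section~\ref{sec:arrowcat}). Hence each of the four equations (a)--(d) can be verified by checking it separately after postcomposition with $\dom$ and with $\cod$. Conceptually, the four equations are just the $\funK_n$-level shadows of Street's monad identities $d_0c=d_0d_0$, $d_1c=d_1$, $d_0i=1_E$ and $d_1i=p$ from Definition~\ref{def:ic}, and the whole point of the argument is that these shadows split cleanly into an ``upstairs'' and a ``downstairs'' half, each of which is already known.

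First I would record the relevant components. Upstairs, $\dom\funK_n=\monL_\bullet^n$; the transformation $d_1$ becomes an \emph{identity} $2$-cell, since the top edge of the square~\eqref{eq:d1K} is an equality; $\dzeroK$ becomes the $d_0$ of Proposition~\ref{prop:d0L}; and $c$, $i$ become Street's multiplication and unit. Downstairs, $\cod\funK_n$ is the base functor $p\mapsto\monL_B^n B$, under which $\dzeroK$, $d_1$, $c$ and $i$ restrict respectively to the base-level $d_0$, $\monL_B^n d_1$ (as noted just after Definition~\ref{def:d1K}), $c$ and $i$ of Street's monad $\monL_B$.

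With these identifications in hand the $\dom$-halves are immediate: since every $d_1$-factor is sent to an identity, (b) and (d) collapse to the tautologies $c=c$ and $i=i$, while (a) and (c) become exactly $d_0c=d_0d_0$ and $d_0i=1$, which are among the defining equations of Definition~\ref{def:ic}. The $\cod$-halves live entirely among the base objects $\monL_B^n B$, where all four equations reduce to monad identities for $\monL_B$: (a) and (c) again to $d_0c=d_0d_0$ and $d_0i=1$; (d) to the unit law $d_1i=1$ read on the trivial bundle $1_B$; and (b) to the face identity $d_1c=d_1$, after the whiskered composite $c\circ(d_1\monL_\bullet)\circ d_1$ has been reduced, using the $2$-naturality of $d_1$, to the outer face map on $\monL_B^2 B$.

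The hard part will be the $\cod$-half of (b). There one must keep careful track of the level $n$ and of which copy of $\monL_\bullet$ is outer versus inner, so that the composite $c\circ(d_1\monL_\bullet)\circ d_1$ is correctly matched, on the base, with $d_1\circ c$. The quickest way to make this transparent is to trace a generalized object $(x_0,x_1,x_2,f_1,f_2)$ of $\monL_B^2 B$ through both sides: the left-hand side sends it via $c$ to $(x_0,x_2,f_2f_1)$ and then via $d_1$ to $x_2$, while the right-hand side sends it via $\monL_B d_1$ to $(x_1,x_2,f_2)$ and then to $x_2$, so both land on $x_2$. This confirms that only the already-established monad structure of $\monL_B$ is invoked; everything else is routine bookkeeping with the components recorded above.
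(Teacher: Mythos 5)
Your proposal is correct, and its skeleton is exactly the paper's: each equation is checked upstairs (composed with $\dom$) and downstairs (composed with $\cod$); upstairs, (b) and (d) are trivial because $\dom d_1$ is an identity, while (a) and (c) are rearrangements of $d_0c=d_0d_0$ and $d_0i=1$ from Definition~\ref{def:ic}. The divergence is in the downstairs halves. For (a) and (c) the paper has a one-line formal argument --- composing with $\cod$ on the left equals composing with $\dom$ on the left and $\funI\cod$ on the right, so the downstairs equation is just the upstairs one whiskered by $\funI\cod$ --- whereas you re-derive the same monad identities at the identity bundles $1_B$; same content, slightly more bookkeeping. For (b) (and analogously (d)) the paper stays inside the 2-dimensional calculus, computing with tangles via the yanking formula~\eqref{eq:ccyank} and the fact that $\cod c=\cod$ for the multiplication of $\monL_\bullet$; you instead identify the downstairs components ($\cod$ of the $\funK$-level $c$ at $p$ is Street's $c$ at $1_B$, $\cod d_1$ at $p$ is $\monL_B^n d_1$, and $\cod$ of the monad's $c$ is an identity) and verify the resulting componentwise equation $d_1\circ c_{1_B}=d_1\circ\monL_B d_1$ by tracing generalized objects, both sides being projection to $x_2$. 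That is legitimate: equality of 2-natural transformations is componentwise, and the generalized-object calculus is grounded in the comma-object universal property, which the paper itself endorses. Your route is more elementary and makes the content of (b) transparent --- both composites are the bundle projection of $\monL_B^2 B$ --- at the price of the component bookkeeping you flag; the paper's formal tangle computation never descends to components, and that is the style which scales to the harder Lemmas~\ref{lem:psiPsi} and~\ref{lem:Psic}.
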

\begin{proof}
  In each part we must check the equation ``upstairs and downstairs'',
  i.e. when left composed with $\dom$ and with $\cod$.

  (a): For the upstairs part, after composing with an appropriate identity 2-cell at the top,
  we find we require
  \[
    \begin{pic}[auto]
     \node (i0) at (0,0) {$\dom$};
     \node (i1) at (1,0) {$\monL_\bullet$};
     \node (i2) at (2,0) {$\monL_\bullet$};
     \node[2cell,minimum width=1.1cm] (c) at (1.5,-0.7) {$c$};
     \node[2cell,minimum width=1.1cm] (d0) at (0.5, -1.5) {$d_0$};
     \node (o0) at (0,-2.2) {$\dom$};
     \draw (i0) -- (i0 |- d0.north);
     \draw (i1) -- (i1 |- c.north);
     \draw (i2) -- (i2 |- c.north);
     \draw (i1 |- c.south) to node {\tiny $\monL_\bullet$} (i1 |- d0.north);
     \draw (o0 |- d0.south) -- (o0);
    \end{pic}
    =
    \begin{pic}[auto]
     \node (i0) at (0,0) {$\dom$};
     \node (i1) at (1,0) {$\monL_\bullet$};
     \node (i2) at (2,0) {$\monL_\bullet$};
     \node[2cell,minimum width=1.1cm] (d01) at (0.5, -0.7) {$d_0$};
     \node[2cell,minimum width=2.1cm] (d02) at (1,-1.5) {$d_0$};
     \node (o0) at (0,-2.2) {$\dom$};
     \draw (i0) -- (i0 |- d01.north);
     \draw (i1) -- (i1 |- d01.north);
     \draw (i2) -- (i2 |- d02.north);
     \draw (i0 |- d01.south) to node {\tiny $\dom$} (i0 |- d02.north);
     \draw (o0 |- d02.south) -- (o0);
    \end{pic}
  \]
  This is just a rearrangement of the condition $d_0 c = d_0 d_0$ in
  Definition~\ref{def:ic}.

  The downstairs part follows from the upstairs,
  because composing with $\cod$ on the left is equal to composing with
  $\dom$ on the left and $\funI\cod$ on the right.

  (b):
  The upstairs part is clear, bearing in mind that each $\upstairs{d_1}$ is an identity morphism.
  For the downstairs part, composing with an appropriate identity 2-cell at the top,
  we calculate as below.
  Note that for the $c$ belonging to $\monL_\bullet$ we have
  $\cod c = \cod$.
  This is used in the second and fourth equations.
  The third uses equation~\eqref{eq:ccyank}.
  \begin{align*}
    \text{LHS }
    & =
    \begin{pic}[auto]
      \node (i0) at (0,0) {$\dom$};
      \node (i1) at (1,0) {};
      \node (i2) at (2,0) {};
      \node (i3) at (3,0) {$\monL_\bullet$};
      \node (i4) at (4,0) {$\monL_\bullet$};
      \node (i5) at (5,0) {$\funI$};
      \node (i6) at (6,0) {$\cod$};
      \node (i7) at (7,0) {};
      \node[2cell,minimum width=1.1cm] (cc) at (1.5,-0.7) {$\cc$};
      \node[2cell,minimum width=1.1cm] (c) at (3.5,-0.7) {$c$};
      \node[iso2cell,minimum width=1.1cm] (iso2) at (6.5,-0.7) {};
      \node[iso2cell,minimum width=1.1cm] (iso3) at (0.5,-1.4) {};
      \node[iso2cell,minimum width=3.1cm] (iso4) at (3.5,-1.4) {};
      \node (o6) at (6,-1.4) {$\cod$};
      \node (o7) at (7,-1.4) {$\monL_\bullet$};
      \draw (i0) -- (i0 |- iso3.north);
      \draw (i3) -- (i3 |- c.north);
      \draw (i4) -- (i4 |- c.north);
      \draw (i5) -- (i5 |- iso4.north);
      \draw (i6) -- (i6 |- iso2.north);
      \draw (i1 |- cc.south) to node {\tiny $\funI$} (i1 |- iso3.north);
      \draw (i2 |- cc.south) to node {\tiny $\cod$} (i2 |- iso4.north);
      \draw (i3 |- c.south) to node {\tiny $\monL_\bullet$} (i3 |- iso4.north);
      \draw (o6 |- iso2.south) -- (o6);
      \draw (o7 |- iso2.south) -- (o7);
    \end{pic}    \\
    & =
    \begin{pic}[auto]
      \node (i0) at (0,0) {$\dom$};
      \node (i1) at (1,0) {};
      \node (i2) at (2,0) {};
      \node (i3) at (3,0) {$\monL_\bullet$};
      \node (i4) at (4,0) {$\monL_\bullet$};
      \node (i5) at (5,0) {$\funI$};
      \node (i6) at (6,0) {$\cod$};
      \node (i7) at (7,0) {};
      \node[2cell,minimum width=1.1cm] (cc) at (1.5,0) {$\cc$};
      \node[iso2cell,minimum width=1.1cm] (iso1) at (0.5,-0.7) {};
      \node[iso2cell,minimum width=2.1cm] (iso2) at (3,-0.7) {};
      \node[iso2cell,minimum width=1.1cm] (iso3) at (6.5,-0.7) {};
      \node[iso2cell,minimum width=3.1cm] (iso4) at (3.5,-1.4) {};
      \node (o6) at (6,-1.4) {$\cod$};
      \node (o7) at (7,-1.4) {$\monL_\bullet$};
      \draw (i0) -- (i0 |- iso1.north);
      \draw (i3) -- (i3 |- iso2.north);
      \draw (i4) -- (i4 |- iso2.north);
      \draw (i5) -- (i5 |- iso4.north);
      \draw (i6) -- (i6 |- iso3.north);
      \draw (i1 |- cc.south) to node {\tiny $\funI$} (i1 |- iso1.north);
      \draw (i2 |- cc.south) to node {\tiny $\cod$} (i2 |- iso2.north);
      \draw (i2 |- iso2.south) to node {\tiny $\cod$} (i2 |- iso4.north);
      \draw (i3 |- iso2.south) to node {\tiny $\monL_\bullet$} (i3 |- iso4.north);
      \draw (o6 |- iso3.south) -- (o6);
      \draw (o7 |- iso3.south) -- (o7);
    \end{pic}\\
    & =
    \begin{pic}[auto]
      \node (i0) at (0,0) {$\dom$};
      \node (i1) at (1,0) {};
      \node (i2) at (2,0) {};
      \node (i3) at (3,0) {$\monL_\bullet$};
      \node (i4) at (4,0) {};
      \node (i5) at (5,0) {};
      \node (i6) at (6,0) {$\monL_\bullet$};
      \node (i7) at (7,0) {$\funI$};
      \node (i8) at (8,0) {$\cod$};
      \node (i9) at (9,0) {};
      \node[2cell,minimum width=1.1cm] (cc1) at (1.5,0) {$\cc$};
      \node[2cell,minimum width=1.1cm] (cc2) at (4.5,0) {$\cc$};
      \node[iso2cell,minimum width=1.1cm] (iso1) at (0.5,-0.7) {};
      \node[iso2cell,minimum width=1.1cm] (iso2) at (2.5,-0.7) {};
      \node[iso2cell,minimum width=1.1cm] (iso3) at (5.5,-0.7) {};
      \node[iso2cell,minimum width=1.1cm] (iso4) at (8.5,-0.7) {};
      \node[iso2cell,minimum width=2.1cm] (iso5) at (3,-1.4) {};
      \node[iso2cell,minimum width=2.1cm] (iso6) at (6,-1.4) {};
      \node (o8) at (8,-1.4) {$\cod$};
      \node (o9) at (9,-1.4) {$\monL_\bullet$};
      \draw (i0) -- (i0 |- iso1.north);
      \draw (i3) -- (i3 |- iso2.north);
      \draw (i6) -- (i6 |- iso3.north);
      \draw (i7) -- (i7 |- iso6.north);
      \draw (i8) -- (i8 |- iso4.north);
      \draw (i1 |- cc1.south) to node {\tiny $\funI$} (i1 |- iso1.north);
      \draw (i2 |- cc1.south) to node {\tiny $\cod$} (i2 |- iso2.north);
      \draw (i4 |- cc2.south) to node {\tiny $\funI$} (i4 |- iso5.north);
      \draw (i5 |- cc2.south) to node {\tiny $\cod$} (i5 |- iso3.north);
      \draw (i2 |- iso2.south) to node {\tiny $\cod$} (i2 |- iso5.north);
      \draw (i5 |- iso3.south) to node {\tiny $\cod$} (i5 |- iso6.north);
      \draw (o8 |- iso4.south) -- (o8);
      \draw (o9 |- iso4.south) -- (o9);
    \end{pic}
    = \text{ RHS.}
  \end{align*}

  (c) and (d) are analogous to (a) and (b), but simpler.
\end{proof}

\section{Functors $\funT_\bullet$ preserving (op)fibrations}\label{sec:fibpres}
In this section we present our main technical result, Theorem~\ref{thm:main}:
any indexed bundle 2-endomorphism $\funT_\bullet$ preserves both opfibrations and fibrations
(as well as the pseudo- versions).
The main part of the argument is to show that $\funT_\bullet$ lifts to the 2-categories
of pseudoalgebras and normalized pseudoalgebras of $\monL_\bullet$,
by defining a 2-transition $\Psi_\bullet$ from $\monL_\bullet$ to itself along $\funT_\bullet$.
A dual argument shows that it also lifts to the corresponding 2-categories for $\monR_\bullet$.
\begin{definition}\label{def:tbullet}
  Let $\arrC$ be the arrow 2-category over a representable 2-category $\catC$.

  A \emph{bundle 2-endomorphism} for $\catC$ is a 2-endofunctor $\funT_\bullet$ of $\arrC$
  such that $\cod\funT_\bullet = \cod$.
  The $\bullet$ used in the notation for bundle endofunctors indicates the ability
  to restrict to $\funT_B$ on each slice $\catC/B$,
  and use notation similar to that already introduced for $\monL_\bullet$.
  Thus $\funT_{\downstairs{f}}\upstairs{f}$ denotes $\upstairs{\funT_\bullet f}$.
  (The ``bundle endomorphism'' condition already tells us that
  $\downstairs{\funT_\bullet f} = \downstairs{f}$.)

  We say that $\funT_\bullet$ is \emph{indexed} if it preserves proneness --
  whenever a morphism $f$ in $\arrC$ is, as commutative square in $\catC$,
  a pullback square, then so too is $\funT_\bullet f$.
  This is equivalent to its being an indexed endofunctor in the sense of indexed categories,
  also to $(\funT_\bullet, \Id_\catC)$ being a morphism of fibrations
  -- though not of opfibrations.
\end{definition}

Note that the slice endofunctors $\monL_B$ and $\monR_B$ are not preserved by pullback:
although $\monL_\bullet$ and $\monR_\bullet$ are bundle 2-endomorphisms,
they are not indexed.
Hence we cannot use the language of indexed categories to discuss the interaction
between them and $\funT_\bullet$.
Instead we use the codomain bifibration explicitly.

\begin{definition}\label{def:psin}
  Let $\funT_\bullet$ be a bundle 2-endofunctor for $\catC$.
  For each natural number $n$, we define the 2-natural transformation
  $\psin{n} \colon \funT_\bullet \funK_n \to \funK_n \funT_\bullet$
  recursively as follows.
  $\psin{0} \colon \funT_\bullet \to \funT_\bullet$ is the identity,
  and $\psin{n+1}$ is
  (using Corollary~\ref{cor:d0Fprone} to deduce that $\dzeroK\funT_\bullet$ is prone)
  the unique 2-natural transformation such that
  \[
    \begin{pic}[auto]
      \node (i0) at (0,0) {$\funT_\bullet$};
      \node (i1) at (1,0) {$\funK_{n+1}$};
      \node[2cell,minimum width=1.1cm] (a) at (0.5,-0.7) {$\psin{n+1}$};
      \node[2cell,minimum width=0cm] (d0) at (0,-1.5) {$\dzeroK$};
      \node (o0) at (0,-2.2) {$\funK_n$};
      \node (o1) at (1,-2.2) {$\funT_\bullet$};
      \draw (i0) -- (i0 |- a.north);
      \draw (i1) -- (i1 |- a.north);
      \draw (i0 |- a.south) to node {\tiny $\funK_{n+1}$} (i0 |- d0.north);
      \draw (i1 |- a.south) -- (o1);
      \draw (o0 |- d0.south) -- (o0);
    \end{pic}
    =
    \begin{pic}[auto]
      \node (i0) at (0,0) {$\funT_\bullet$};
      \node (i1) at (1,0) {$\funK_{n+1}$};
      \node[2cell,minimum width=0cm] (d0) at (1,-0.7) {$\dzeroK$};
      \node[2cell,minimum width=1.1cm] (a) at (0.5,-1.5) {$\psin{n}$};
      \node (o0) at (0,-2.2) {$\funK_n$};
      \node (o1) at (1,-2.2) {$\funT_\bullet$};
      \draw (i0) -- (i0 |- a.north);
      \draw (i1) -- (i1 |- d0.north);
      \draw (i1 |- d0.south) to node {\tiny $\funK_{n}$} (i1 |- a.north);
      \draw (o0 |- a.south) -- (o0);
      \draw (o1 |- a.south) -- (o1);
    \end{pic}
    \text{ and }
    \begin{pic}[auto]
      \node (i0) at (0,0) {$\cod$};
      \node (i1) at (1,0) {$\funT_\bullet$};
      \node (i2) at (2,0) {$\funK_{n+1}$};
      \node[2cell,minimum width=1.1cm] (a) at (1.5,-0.7) {$\psin{n+1}$};
      \node (o0) at (0,-1.4) {$\cod$};
      \node (o1) at (1,-1.4) {$\funK_{n+1}$};
      \node (o2) at (2,-1.4) {$\funT_\bullet$};
      \draw (i0) -- (o0);
      \draw (i1) -- (i1 |- a.north);
      \draw (i2) -- (i2 |- a.north);
      \draw (o1 |- a.south) -- (o1);
      \draw (o2 |- a.south) -- (o2);
    \end{pic}
    =
    \begin{pic}[auto]
      \node (i0) at (0,0) {$\cod$};
      \node (i1) at (1,0) {$\funT_\bullet$};
      \node (i2) at (2,0) {$\funK_{n+1}$};
      \node (i3) at (3,0) {};
      \node (i4) at (4,0) {};
      \node[iso2cell,minimum width=4.1cm] (iso1) at (2,-0.7) {};
      \node[iso2cell,minimum width=3.1cm] (iso2) at (1.5,-1.7) {};
      \node (o0) at (0,-2.4) {$\cod$};
      \node (o1) at (1,-2.4) {$\funK_{n+1}$};
      \node (o4) at (4,-2.4) {$\funT_\bullet$};
      \draw (i0) -- (i0 |- iso1.north);
      \draw (i1) -- (i1 |- iso1.north);
      \draw (i2) -- (i2 |- iso1.north);
      \draw (i0 |- iso1.south) to node {\tiny $\dom$} (i0 |- iso2.north);
      \draw (i1 |- iso1.south) to node {\tiny $\monL_\bullet^{n+1}$} (i1 |- iso2.north);
      \draw (i2 |- iso1.south) to node {\tiny $\funI$} (i2 |- iso2.north);
      \draw (i3 |- iso1.south) to node {\tiny $\cod$} (i3 |- iso2.north);
      \draw (o4 |- iso1.south) -- (o4);
      \draw (o0 |- iso2.south) -- (o0);
      \draw (o1 |- iso2.south) -- (o1);
    \end{pic}\text{.}
   \]
\end{definition}

For the rest of this section, we shall take it that we are given a representable
category $\catC$ and an indexed bundle 2-endofunctor $\funT_\bullet$ for it.

\begin{proposition}
  Each $\psin{n}$ is a 2-natural isomorphism.
\end{proposition}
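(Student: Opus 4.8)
The plan is to argue by induction on $n$, using the two defining equations of Definition~\ref{def:psin} together with the fact that, since $\catC$ is representable, $\cod\colon\arrC\to\catC$ is a fibration whose prone morphisms are precisely the pullback squares. The base case is trivial, as $\psin{0}$ is the identity. For the inductive step I would assume that $\psin{n}$ is a 2-natural isomorphism and deduce the same for $\psin{n+1}$. A 2-natural transformation whose components are all invertible automatically has a 2-natural inverse, and a morphism of $\arrC$ is invertible exactly when both its $\dom$ and $\cod$ parts are invertible in $\catC$; so it is enough to check invertibility of these two parts at each object $X$.

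The $\cod$ part is read off directly from the second equation of Definition~\ref{def:psin}: since $\cod\funT_\bullet=\cod$, both $\cod\funT_\bullet\funK_{n+1}$ and $\cod\funK_{n+1}\funT_\bullet$ coincide with $\dom\monL_\bullet^{n+1}\funI\cod$, and that equation presents $\cod\,\psin{n+1}$ as the resulting (identity) isomorphism. Hence each $\cod(\psin{n+1})_X$ is invertible. For the $\dom$ part I would first show that each component $(\psin{n+1})_X$ is prone, i.e.\ a pullback square. Taking the $X$-component of the first defining equation gives
\[
  (\dzeroK\funT_\bullet)_X\circ(\psin{n+1})_X
    = (\psin{n})_X\circ(\funT_\bullet\dzeroK)_X\text{.}
\]
In this identity $\dzeroK\funT_\bullet$ has pullback squares as components by Corollary~\ref{cor:d0Fprone}, so $(\dzeroK\funT_\bullet)_X$ is prone; $(\funT_\bullet\dzeroK)_X$ is a pullback square because the components of $\dzeroK$ are pullbacks (Proposition~\ref{prop:LBp}) and $\funT_\bullet$, being indexed, preserves them; and $(\psin{n})_X$ is invertible, hence prone, by the inductive hypothesis. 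The right-hand side is therefore a composite of prone morphisms, hence prone, while the left factor $(\dzeroK\funT_\bullet)_X$ is prone; the cancellation property for prone morphisms (equivalently, the pullback pasting lemma in the form ``outer rectangle and right-hand square pullbacks $\Rightarrow$ left-hand square pullback'') then forces $(\psin{n+1})_X$ to be prone.

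Combining the two parts finishes the step: $(\psin{n+1})_X$ is a pullback square whose $\cod$ (bottom) arrow is invertible, and the pullback of an isomorphism is an isomorphism, so its $\dom$ (top) arrow is invertible as well. Thus $(\psin{n+1})_X$ is invertible in $\arrC$ for every $X$, and $\psin{n+1}$ is a 2-natural isomorphism. I expect the one genuinely non-formal step to be the identification of $(\funT_\bullet\dzeroK)_X$ as prone: this is exactly where the hypothesis that $\funT_\bullet$ is indexed (preserves pullbacks) is used, and everything else is bookkeeping with whiskerings and the cancellation lemma, plus care in reading off the $\cod$ part from the second defining equation.
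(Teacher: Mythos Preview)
Your argument is correct, and it takes a genuinely different route from the paper's own proof. The paper observes that since $\funT_\bullet$ preserves proneness, $\funT_\bullet\dzeroK$ is prone (by Lemma~\ref{lem:pullback}), and then uses this to \emph{define} a candidate inverse $\psin{n}^{-1}$ by the mirror-image of Definition~\ref{def:psin}: the first equation is reflected left--right (so that $\funT_\bullet\dzeroK$ plays the role that $\dzeroK\funT_\bullet$ played before), and the second equation is turned upside down. One then checks directly that this is a two-sided inverse of $\psin{n}$.

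Your approach avoids constructing the inverse explicitly: you instead show componentwise invertibility by combining the identity on the $\cod$ side with the pullback cancellation lemma on the $\dom$ side, feeding in the inductive hypothesis and the indexedness of $\funT_\bullet$ at exactly the point where $(\funT_\bullet\dzeroK)_X$ must be prone. Both proofs pivot on the same hypothesis (that $\funT_\bullet$ preserves prone morphisms), but yours packages the verification more structurally --- once you know $(\psin{n+1})_X$ is a pullback square with invertible bottom edge, invertibility is immediate --- whereas the paper's version has the virtue of exhibiting $\psin{n}^{-1}$ by a universal property entirely parallel to that defining $\psin{n}$.
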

\begin{proof}
  Because $\funT_\bullet$ preserves proneness, we know that $\funT_\bullet \dzeroK$
  satisfies the conditions of Lemma~\ref{lem:pullback} and hence is prone.
  This allows us to define $\psin{n}^{-1}$ with diagrams similar to those of
  Definition~\ref{def:psin}.
  In the first equation each diagram is reflected left to right,
  while in the second they are upside down.
  One can then prove it is the inverse of $\psin{n}$.
\end{proof}

\begin{lemma}\label{lem:psiic}
  \[
    \begin{pic}[auto]
      \node (i0) at (0,0) {};
      \node[2cell,minimum width=0cm] (i) at (0,0) {$i$};
      \node (i1) at (1,0) {$\funT_\bullet$};
      \node (o0) at (0,-1) {$\funK_1$};
      \node (o1) at (1,-1) {$\funT_\bullet$};
      \draw (i0 |- i.south) -- (o0);
      \draw (i1) -- (o1);
    \end{pic}
    \stackrel{\text{(a)}}{=}
    \begin{pic}[auto]
      \node (i0) at (0,0) {$\funT_\bullet$};
      \node (i1) at (1,0) {};
      \node[2cell,minimum width=0cm] (i) at (1,0) {$i$};
      \node[2cell,minimum width=1.1cm] (psi) at (0.5,-0.7) {$\psin{1}$};
      \node (o0) at (0,-1.4) {$\funK_1$};
      \node (o1) at (1,-1.4) {$\funT_\bullet$};
      \draw (i0) -- (i0 |- psi.north);
      \draw (i1 |- i.south) to node {\tiny $\funK_1$} (i1 |- psi.north);
      \draw (i0 |- psi.south) -- (o0);
      \draw (i1 |- psi.south) -- (o1);
    \end{pic}
    \text{, }
    \begin{pic}[auto]
      \node (i0) at (0,0) {$\funT_\bullet$};
      \node (i1) at (1,0) {$\funK_2$};
      \node[2cell,minimum width=1.1cm] (alpha) at (0.5,-0.7) {$\psin{2}$};
      \node[2cell,minimum width=0cm] (c) at (0,-1.5) {$c$};
      \node (o0) at (0,-2.2) {$\funK_1$};
      \node (o1) at (1,-2.2) {$\funT_\bullet$};
      \draw (i0) -- (i0 |- alpha.north);
      \draw (i1) -- (i1 |- alpha.north);
      \draw (i0 |- alpha.south) to node {\tiny $\funK_2$} (i0 |- c.north);
      \draw (o0 |- c.south) -- (o0);
      \draw (o1 |- alpha.south) -- (o1);
    \end{pic}
    \stackrel{\text{(b)}}{=}
    \begin{pic}[auto]
      \node (i0) at (0,0) {$\funT_\bullet$};
      \node (i1) at (1,0) {$\funK_2$};
      \node[2cell,minimum width=0cm] (c) at (1,-0.7) {$c$};
      \node[2cell,minimum width=1.1cm] (alpha) at (0.5,-1.4) {$\psin{1}$};
      \node (o0) at (0,-2.1) {$\funK_1$};
      \node (o1) at (1,-2.1) {$\funT_\bullet$};
      \draw (i0) -- (i0 |- alpha.north);
      \draw (i1) -- (i1 |- c.north);
      \draw (i1 |- c.south) to node {\tiny $\funK_1$} (i1 |- alpha.north);
      \draw (o0 |- alpha.south) -- (o0);
      \draw (o1 |- alpha.south) -- (o1);
    \end{pic} \text{.}
   \]
\end{lemma}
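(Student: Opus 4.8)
The plan is to prove both equations by exploiting the fact that each $\psin{n+1}$ is pinned down by a universal (pullback) property, so that two $2$-natural transformations landing in $\funK_1\funT_\bullet$ coincide as soon as they agree after post-composition with the prone transformation $\dzeroK\funT_\bullet$ and after left composition with $\cod$. Concretely, both sides of (a) are transformations $\funT_\bullet \to \funK_1\funT_\bullet$ and both sides of (b) are transformations $\funT_\bullet\funK_2 \to \funK_1\funT_\bullet$. Since $\dzeroK\funT_\bullet\colon\funK_1\funT_\bullet\to\funT_\bullet$ is prone by Corollary~\ref{cor:d0Fprone}, the remark following Lemma~\ref{lem:pullback} tells us it suffices to check each equation ``after $\dzeroK$'' and ``downstairs'' (after $\cod$).

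For the $\dzeroK$-check the computation is purely formal. Write $(\ast)$ for the defining relation $(\dzeroK\funT_\bullet)\,\psin{n+1} = \psin{n}\,(\funT_\bullet\dzeroK)$ of Definition~\ref{def:psin}, and recall $\psin{0}=\Id$. For (a), post-composing the right-hand side with $\dzeroK\funT_\bullet$ and applying $(\ast)$ with $n=0$ turns it into $(\funT_\bullet\dzeroK)(\funT_\bullet i)=\funT_\bullet(\dzeroK\, i)$, which is the identity by Lemma~\ref{lem:cid0d1}(c), while the left-hand side becomes $(\dzeroK\, i)\funT_\bullet$, again the identity. For (b), post-composition with $\dzeroK\funT_\bullet$, followed by $(\ast)$ and Lemma~\ref{lem:cid0d1}(a), reduces both sides to $\funT_\bullet(\dzeroK\,\dzeroK)$; the left-hand side needs two applications of $(\ast)$ (at $n=1$ and $n=0$) together with $\dzeroK\, c=\dzeroK\,\dzeroK$, the right-hand side only one application of $(\ast)$ and the same instance of Lemma~\ref{lem:cid0d1}(a).

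For the $\cod$-check I would use the second defining equation of Definition~\ref{def:psin}, which expresses $\cod\psin{n+1}$ entirely in terms of the canonical isomorphisms built from $\cc$, $\funI$ and $\cod$, together with the fact that $\cod\funT_\bullet=\cod$. The relevant tools are the ``yanking'' formula~\eqref{eq:ccyank} and parts (b) and (d) of Lemma~\ref{lem:cid0d1}, which record how $c$ and $i$ interact with the $d_1$-leg, and hence with the $\monL_\bullet$-direction that survives under $\cod$. Unwinding both sides of (a) and of (b) after $\cod$, and cancelling the isomorphism coupons against one another, should yield the same composite of structure maps for $\monL_\bullet$ on each side.

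I expect this last $\cod$-verification to be the main obstacle. The $\dzeroK$-part is a short chain of rewrites, but the $\cod$-part requires carefully tracking and cancelling all the isomorphism $2$-cells introduced by the $\funI\cod$-bookkeeping in the definitions of $\funK_n$ and of $\psin{n}$. Here the $2$-tangle calculus is indispensable, since keeping the coherence isomorphisms in separate coupons is precisely what makes the cancellations visible; the genuine algebraic content is once more supplied by Lemma~\ref{lem:cid0d1}, now in its $d_1$-form rather than its $\dzeroK$-form.
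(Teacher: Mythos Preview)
Your strategy matches the paper's exactly: use proneness of $\dzeroK\funT_\bullet$ (Corollary~\ref{cor:d0Fprone}) to reduce each equation to a check after $\dzeroK\funT_\bullet$ and a check after $\cod$, and your $\dzeroK$-computations using Definition~\ref{def:psin} and Lemma~\ref{lem:cid0d1}(a),(c) are precisely what the paper does.

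You have the relative difficulty of the two checks backwards, though. The $\cod$-check is not the main obstacle but the trivial one: the second equation in Definition~\ref{def:psin} says that $\cod\,\psin{n+1}$ is a composite of identity $2$-cells, and since $\cod\funT_\bullet=\cod$ the two $2$-functors $\cod\,\funT_\bullet\funK_{n+1}$ and $\cod\,\funK_{n+1}\funT_\bullet$ are literally equal. Hence $\cod\,\psin{n}$ is an identity, and applying $\cod$ to either side of (a) or (b) collapses all the $\psin{n}$'s and $\funT_\bullet$'s, leaving the same expression on both sides with no cancellation to perform. You do not need Lemma~\ref{lem:cid0d1}(b),(d) or any tangle-chasing here; the paper dispatches this case in one sentence. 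The substantive work is entirely in the $\dzeroK$-check, which you have already outlined correctly.
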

\begin{proof}
  By Corollary~\ref{cor:d0Fprone},
  taking $\dzeroK \colon \funK_1 \to \funK_0 = \Id_{\arrC}$,
  it suffices to check for each equation that it holds when composed (i) with $\cod$ on the left,
  and (ii) with $\dzeroK \funT_\bullet$ at the bottom.
  (i) is clear from the facts that each $\cod\psin{n}$ is an identity,
  and $\cod\funT_\bullet = \cod$.
  It remains to check (ii) for each equation.

  For (a) we have
  \[
    \text{LHS } =
    \begin{pic}[auto]
      \node (i0) at (0,0) {};
      \node[2cell,minimum width=0cm] (i) at (0,0) {$i$};
      \node (i1) at (1,0) {$\funT_\bullet$};
      \node (o0) at (0,-1) {$\funK_1$};
      \node[2cell,minimum width=0cm] (d0) at (0,-1) {$\dzeroK$};
      \node (o1) at (1,-1) {$\funT_\bullet$};
      \draw (i0 |- i.south) to node {\tiny $\funK_1$} (i0 |- d0.north);
      \draw (i1) -- (o1);
    \end{pic}
    =
    \begin{pic}[auto]
      \node (i0) at (0,0) {$\funT_\bullet$};
      \node (o0) at (0,-1) {$\funT_\bullet$};
      \draw (i0) -- (o0);
    \end{pic}
    =
    \begin{pic}[auto]
      \node (i0) at (0,0) {$\funT_\bullet$};
      \node (i1) at (1,0) {};
      \node[2cell,minimum width=0cm] (i) at (1,0) {$i$};
      \node (o0) at (0,-1) {$\funT_\bullet$};
      \node (o1) at (1,-1) {};
      \node[2cell,minimum width=0cm] (d0) at (1,-1) {$\dzeroK$};
      \draw (i0) -- (o0);
      \draw (i1 |- i.south) to node {\tiny $\funK_1$} (i1 |- d0.north);
    \end{pic}
    =
    \begin{pic}[auto]
      \node (i0) at (0,0) {$\funT_\bullet$};
      \node (i1) at (1,0) {};
      \node[2cell,minimum width=0cm] (i) at (1,0) {$i$};
      \node[2cell,minimum width=1.1cm] (psi) at (0.5,-0.8) {$\psin{1}$};
      \node (o0) at (0,-1.6) {};
      \node[2cell,minimum width=0cm] (d0) at (0,-1.6) {$\dzeroK$};
      \node (o1) at (1,-1.6) {$\funT_\bullet$};
      \draw (i0) -- (i0 |- psi.north);
      \draw (i1 |- i.south) to node {\tiny $\funK_1$} (i1 |- psi.north);
      \draw (i0 |- psi.south) to node {\tiny $\funK_1$} (i0 |- d0.north);
      \draw (i1 |- psi.south) -- (o1);
    \end{pic}
  \]
  Here the second and third equations use Lemma~\ref{lem:cid0d1}~(c),
  and the fourth uses Definition~\ref{def:psin}.

  For (b) we have
  \[
    \begin{pic}[auto]
      \node (i0) at (0,0) {$\funT_\bullet$};
      \node (i1) at (1,0) {$\funK_2$};
      \node[2cell,minimum width=1.1cm] (alpha) at (0.5,-0.7) {$\psin{2}$};
      \node[2cell,minimum width=0cm] (c) at (0,-1.5) {$c$};
      \node[2cell,minimum width=0cm] (d0) at (0,-2.3) {$\dzeroK$};
      \node (o1) at (1,-2.3) {$\funT_\bullet$};
      \draw (i0) -- (i0 |- alpha.north);
      \draw (i1) -- (i1 |- alpha.north);
      \draw (i0 |- alpha.south) to node {\tiny $\funK_2$} (i0 |- c.north);
      \draw (o1 |- alpha.south) -- (o1);
      \draw (i0 |- c.south) to node {\tiny $\funK_1$} (i0 |- d0.north);
    \end{pic}
    =
    \begin{pic}[auto]
      \node (i0) at (0,0) {$\funT_\bullet$};
      \node (i1) at (1,0) {$\funK_2$};
      \node[2cell,minimum width=1.1cm] (alpha) at (0.5,-0.7) {$\psin{2}$};
      \node[2cell,minimum width=0cm] (d01) at (0,-1.5) {$\dzeroK$};
      \node[2cell,minimum width=0cm] (d02) at (0,-2.3) {$\dzeroK$};
      \node (o1) at (1,-2.3) {$\funT_\bullet$};
      \draw (i0) -- (i0 |- alpha.north);
      \draw (i1) -- (i1 |- alpha.north);
      \draw (i0 |- alpha.south) to node {\tiny $\funK_2$} (i0 |- d01.north);
      \draw (o1 |- alpha.south) -- (o1);
      \draw (i0 |- d01.south) to node {\tiny $\funK_1$} (i0 |- d02.north);
    \end{pic}
    =
    \begin{pic}[auto]
      \node (i0) at (0,0) {$\funT_\bullet$};
      \node (i1) at (1,0) {$\funK_2$};
      \node[2cell,minimum width=0cm] (d01) at (1,-0.7) {$\dzeroK$};
      \node[2cell,minimum width=0cm] (d02) at (1,-1.5) {$\dzeroK$};
      \node (o0) at (0,-1.5) {$\funT_\bullet$};
      \draw (i0) -- (o0);
      \draw (i1) -- (i1 |- d01.north);
      \draw (i1 |- d01.south) to node {\tiny $\funK_1$} (i1 |- d02.north);
    \end{pic}
    =
    \begin{pic}[auto]
      \node (i0) at (0,0) {$\funT_\bullet$};
      \node (i1) at (1,0) {$\funK_2$};
      \node[2cell,minimum width=0cm] (c) at (1,-0.7) {$c$};
      \node[2cell,minimum width=0cm] (d0) at (1,-1.5) {$\dzeroK$};
      \node (o0) at (0,-1.5) {$\funT_\bullet$};
      \draw (i0) -- (o0);
      \draw (i1) -- (i1 |- c.north);
      \draw (i1 |- c.south) to node {\tiny $\funK_1$} (i1 |- d0.north);
    \end{pic}
    =
    \begin{pic}[auto]
      \node (i0) at (0,0) {$\funT_\bullet$};
      \node (i1) at (1,0) {$\funK_2$};
      \node[2cell,minimum width=0cm] (c) at (1,-0.7) {$c$};
      \node[2cell,minimum width=1.1cm] (alpha) at (0.5,-1.5) {$\psin{1}$};
      \node[2cell,minimum width=0cm] (d0) at (0,-2.3) {$\dzeroK$};
      \node (o1) at (1,-2.3) {$\funT_\bullet$};
      \draw (i0) -- (i0 |- alpha.north);
      \draw (i1) -- (i1 |- c.north);
      \draw (i1 |- c.south) to node {\tiny $\funK_1$} (i1 |- alpha.north);
      \draw (i0 |- alpha.south) to node {\tiny $\funK_1$} (i0 |- d0.north);
      \draw (o1 |- alpha.south) -- (o1);
    \end{pic}
  \]
  For the first and third equations we have used Lemma~\ref{lem:cid0d1}~(a),
  while for the second and fourth we have used Definition~\ref{def:psin}.
\end{proof}

\begin{definition}\label{def:Psi}
  Using Lemma~\ref{lem:d1Fsupine} with $\funT_\bullet$ for $\mathfrak{F}$,
  and using invertibility of $\psi_1$,
  we define the 2-natural transformation
  $\Psi_\bullet \colon \monL_\bullet\funT_\bullet \rightarrow \funT_\bullet \monL_\bullet$
  as the unique such over $\cod$ satisfying
  \[
    \begin{pic}[auto]
      \node (i0) at (0,0) {$\funT_\bullet$};
      \node (i1) at (1,0) {$\funK_1$};
      \node[2cell,minimum width=1.1cm] (alpha) at (0.5,-0.7) {$\psin{1}$};
      \node[2cell,minimum width=0cm] (d1) at (0,-1.5) {$d_1$};
      \node[2cell,minimum width=1.1cm] (psi) at (0.5,-2.3) {$\Psi_\bullet$};
      \node (o0) at (0,-3) {$\funT_\bullet$};
      \node (o1) at (1,-3) {$\monL_\bullet$};
      \draw (i0) -- (i0 |- alpha.north);
      \draw (i1) -- (i1 |- alpha.north);
      \draw (i0 |- alpha.south) to node {\tiny $\funK_1$} (i0 |- d1.north);
      \draw (i1 |- alpha.south) to node {\tiny $\funT_\bullet$} (i1 |- psi.north);
      \draw (i0 |- d1.south) to node {\tiny $\monL_\bullet$} (i0 |- psi.north);
      \draw (o0 |- psi.south) -- (o0);
      \draw (o1 |- psi.south) -- (o1);
    \end{pic}
    =
    \begin{pic}[auto]
      \node (i0) at (0,0) {$\funT_\bullet$};
      \node (i1) at (1,0) {$\funK_1$};
      \node[2cell,minimum width=0cm] (d1) at (1,-0.7) {$d_1$};
      \node (o0) at (0,-1.4) {$\funT_\bullet$};
      \node (o1) at (1,-1.4) {$\monL_\bullet$};
      \draw (i0) -- (o0);
      \draw (i1) -- (i1 |- d1.north);
      \draw (o1 |- d1.south) -- (o1);
    \end{pic}
  \]
\end{definition}
In diagrammatic form, this works as follows.
From Proposition~\ref{prop:LBp},
we get two pullback squares
\[
  \xymatrix{
    {\monL_B E} \ar@{->}[d] \ar@{->}[r]^{d_0}
        &  {E} \ar@{->}[d]^{p} \\
    {\Phi B} \ar@{->}[r]^{d_0} & B
    {\ar@{-}(6,-2); (6,-4)}
    {\ar@{-}(4,-4); (6,-4)}
  }
  \quad
  \xymatrix{
    {\monL_B\funT_{B} E} \ar@{->}[d] \ar@{->}[r]^{d_0}
        &  {\funT_B E} \ar@{->}[d] \\
    {\Phi B} \ar@{->}[r]^{d_0} & B
    {\ar@{-}(6,-2); (6,-4)}
    {\ar@{-}(4,-4); (6,-4)}
  }
  \text{.}
\]
Applying $\funT_\bullet$ to the first, we obtain that
$\monL_B \funT_B E \cong \funT_{\Phi B}\monL_B E$.
Composing this with
\begin{equation}\label{eq:Td11}
  \downstairs{\funT_\bullet}\left(\vcenter{
    \xymatrix{
      {\monL_B E} \ar@{->}[d]_{\monL_B p} \ar@{=}[r] & {\monL_B E} \ar@{->}[d]^{d_1} \\
      {\Phi B} \ar@{->}[r]^{d_1}  &  B
    }
  }\right)
\end{equation}
gives us a 1-cell $\funT_{d_1}1\colon \monL_B \funT_B E \rightarrow \funT_B \monL_B E$,
which will be the component at $(E,p)$ of our 2-natural transformation $\Psi_\bullet$.

\begin{lemma}\label{lem:psiPsi}
  \[
    \begin{pic}[auto]
      \node (i0) at (0,0) {$\funT_\bullet$};
      \node (i1) at (1,0) {};
      \node (i2) at (2,0) {$\funK_{n+1}$};
      \node[2cell,minimum width=2.1cm] (alpha) at (1,-0.7) {$\psin{n+1}$};
      \node[2cell,minimum width=1.1cm] (d1) at (0.5,-1.5) {$d_1$};
      \node[2cell,minimum width=1.1cm] (psi) at (1.5,-2.3) {$\Psi_\bullet$};
      \node (o0) at (0,-3) {$\funK_{n}$};
      \node (o1) at (1,-3) {$\funT_\bullet$};
      \node (o2) at (2,-3) {$\monL_\bullet$};
      \draw (i0) -- (i0 |- alpha.north);
      \draw (i2) -- (i2 |- alpha.north);
      \draw (i0 |- alpha.south) to node {\tiny $\funK_{n+1}$} (i0 |- d1.north);
      \draw (i2 |- alpha.south) to node {\tiny $\funT_\bullet$} (i2 |- psi.north);
      \draw (o0 |- d1.south) -- (o0);
      \draw (i1 |- d1.south) to node {\tiny $\monL_\bullet$} (i1 |- psi.north);
      \draw (o1 |- psi.south) -- (o1);
      \draw (o2 |- psi.south) -- (o2);
    \end{pic}
    =
    \begin{pic}[auto]
      \node (i0) at (0,0) {$\funT_\bullet$};
      \node (i1) at (1,0) {$\funK_{n+1}$};
      \node (i2) at (2,0) {};
      \node[2cell,minimum width=1.1cm] (d1) at (1.5,-0.7) {$d_1$};
      \node[2cell,minimum width=1.1cm] (alpha) at (0.5,-1.5) {$\psin{n}$};
      \node (o0) at (0,-2.2) {$\funK_{n}$};
      \node (o1) at (1,-2.2) {$\funT_\bullet$};
      \node (o2) at (2,-2.2) {$\monL_\bullet$};
      \draw (i0) -- (i0 |- alpha.north);
      \draw (i1) -- (i1 |- d1.north);
      \draw (i1 |- d1.south) to node {\tiny $\funK_n$} (i1 |- alpha.north);
      \draw (i2 |- d1.south) -- (o2);
      \draw (o0 |- alpha.south) -- (o0);
      \draw (o1 |- alpha.south) -- (o1);
    \end{pic}
  \]
\end{lemma}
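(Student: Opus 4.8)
The plan is to argue by induction on $n$. The base case $n=0$ is precisely Definition~\ref{def:Psi}, since $\funK_0=\Id_\catC$ on $\arrC$ and $\psin{0}$ is the identity, so that the claimed equation collapses to $\Psi_\bullet\circ(d_1\funT_\bullet)\circ\psin{1}=\funT_\bullet d_1$.

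For the inductive step I would record that both sides of the $(n{+}1)$-instance are 2-natural transformations $\funT_\bullet\funK_{n+2}\to\funK_{n+1}\funT_\bullet\monL_\bullet$. By Corollary~\ref{cor:d0Fprone}, applied with $\mathfrak{F}=\funT_\bullet\monL_\bullet$, the transformation $\dzeroK\funT_\bullet\monL_\bullet\colon\funK_{n+1}\funT_\bullet\monL_\bullet\to\funK_n\funT_\bullet\monL_\bullet$ is prone; hence, by the remark following Lemma~\ref{lem:pullback}, it suffices to check that the two sides (i) have the same $\cod$ part and (ii) agree after postcomposition with $\dzeroK\funT_\bullet\monL_\bullet$.

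For (ii) I would reduce both composites to a common normal form. Postcomposing the left-hand side, 2-naturality of $\dzeroK\colon\funK_{n+1}\to\funK_n$ (whiskered by $\Psi_\bullet$) moves $\dzeroK$ past $\funK_{n+1}\Psi_\bullet$; Lemma~\ref{lem:d1d0} then exchanges $\dzeroK$ with $d_1$; the first equation of Definition~\ref{def:psin} exchanges $\dzeroK\funT_\bullet$ with $\psin{n+2}$, turning the head of the composite into $\psin{n+1}$ followed by $\funT_\bullet\dzeroK$; and the inductive hypothesis then finishes, leaving $(\psin{n}\monL_\bullet)\circ\funT_\bullet(d_1\circ\dzeroK)$. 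Postcomposing the right-hand side, the same first equation of Definition~\ref{def:psin}, whiskered on the right by $\monL_\bullet$, moves $\dzeroK\funT_\bullet\monL_\bullet$ past $\psin{n+1}\monL_\bullet$, and one more application of Lemma~\ref{lem:d1d0} produces the very same expression $(\psin{n}\monL_\bullet)\circ\funT_\bullet(d_1\circ\dzeroK)$. Thus (ii) holds.

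Part (i) should be routine: $\psin{n+2}$, $\psin{n+1}$ and $\Psi_\bullet$ are all trivial on codomains (identities up to the coherence isomorphisms fixed by the second equation of Definition~\ref{def:psin} and by the ``over $\cod$'' clause of Definition~\ref{def:Psi}), and $\cod\funT_\bullet=\cod$, so both sides restrict downstairs to the single map assembled from $\cod d_1$. The main obstacle is the bookkeeping: keeping the whiskerings and the three interchange laws (2-naturality of $\dzeroK$, Lemma~\ref{lem:d1d0}, and the $\psin{}$-recursion) applied in the right order, and confirming that the $\cod$ parts genuinely coincide rather than only up to an uncancelled coherence cell.
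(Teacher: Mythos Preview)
Your proposal is correct and follows essentially the same route as the paper: induction on $n$ with base case given by Definition~\ref{def:Psi}, reduction via the proneness of $\dzeroK\funT_\bullet\monL_\bullet$ (Corollary~\ref{cor:d0Fprone}) to checking the $\cod$ part and the $\dzeroK$-postcomposite, and then the same manipulations using Lemma~\ref{lem:d1d0} and the recursion of Definition~\ref{def:psin}. The only cosmetic difference is that the paper organizes the proneness reduction once for general $n$ and runs the induction inside part~(ii), whereas you fold the reduction into the inductive step; the actual chain of rewrites is identical.
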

\begin{proof}
  Note that the case $n=0$ is simply Definition~\ref{def:Psi}.
  Using Corollary~\ref{cor:d0Fprone}, it suffices to prove the equation when composed
  (i) on the left with $\cod$, and (ii) at the bottom with $\dzeroK \funT_\bullet \monL_\bullet$.

  For (i), applying $\cod$, and bearing in mind that each
  $\cod \psin{n}$ and $\cod \Psi_\bullet$is an identity,
  we get -- modulo some identity 2-cells at top and bottom --
  \begin{align*}
    \text{LHS } & =
    \begin{pic}[auto]
      \node (i0) at (0,0) {$\dom$};
      \node (i1) at (1,0) {$\monL_\bullet^n$};
      \node (i2) at (2,0) {};
      \node (i3) at (3,0) {};
      \node (i4) at (4,0) {$\monL_\bullet$};
      \node (i5) at (5,0) {$\funI$};
      \node (i6) at (6,0) {$\cod$};
      \node (i7) at (7,0) {};
      \node (i8) at (8,0) {$\funT_\bullet$};
      \node[2cell,minimum width=1.1cm] (cc) at (2.5,0) {$\cc$};
      \node[iso2cell,minimum width=2.1cm] (iso1) at (4,-0.7) {};
      \node[iso2cell,minimum width=1.1cm] (iso2) at (6.5,-0.7) {};
      \node[2cell,minimum width=1.1cm] (psi) at (7.5,-1.4) {$\Psi_\bullet$};
      \node (o0) at (0,-2.1) {$\dom$};
      \node (o1) at (1,-2.1) {$\monL_\bullet^n$};
      \node (o2) at (2,-2.1) {$\funI$};
      \node (o6) at (6,-2.1) {$\cod$};
      \node (o7) at (7,-2.1) {$\funT_\bullet$};
      \node (o8) at (8,-2.1) {$\monL_\bullet$};
      \draw (i0) -- (o0);
      \draw (i1) -- (o1);
      \draw (i2 |- cc.south) -- (o2);
      \draw (i3 |- cc.south) to node {\tiny $\cod$} (i3 |- iso1.north);
      \draw (i4) -- (i4 |- iso1.north);
      \draw (i5) -- (i5 |- iso1.north);
      \draw (i6) -- (i6 |- iso2.north);
      \draw (i8) -- (i8 |- psi.north);
      \draw (i6 |- iso2.south) -- (o6);
      \draw (i7 |- iso2.south) to node {\tiny $\monL_\bullet$} (i7 |- psi.north);
      \draw (o7 |- psi.south) -- (o7);
      \draw (o8 |- psi.south) -- (o8);
    \end{pic} \\
    & =
    \begin{pic}[auto]
      \node (i0) at (0,0) {$\dom$};
      \node (i1) at (1,0) {$\monL_\bullet^n$};
      \node (i2) at (2,0) {};
      \node (i3) at (3,0) {};
      \node (i4) at (4,0) {$\monL_\bullet$};
      \node (i5) at (5,0) {$\funI$};
      \node (i6) at (6,0) {$\cod$};
      \node (i7) at (7,0) {};
      \node (i8) at (8,0) {$\funT_\bullet$};
      \node[2cell,minimum width=1.1cm] (cc) at (2.5,0) {$\cc$};
      \node[iso2cell,minimum width=2.1cm] (iso1) at (4,-0.7) {};
      \node[iso2cell,minimum width=2.1cm] (iso2) at (7,-0.7) {};
      \node (o0) at (0,-1.4) {$\dom$};
      \node (o1) at (1,-1.4) {$\monL_\bullet^n$};
      \node (o2) at (2,-1.4) {$\funI$};
      \node (o6) at (6,-1.4) {$\cod$};
      \node (o7) at (7,-1.4) {$\funT_\bullet$};
      \node (o8) at (8,-1.4) {$\monL_\bullet$};
      \draw (i0) -- (o0);
      \draw (i1) -- (o1);
      \draw (i2 |- cc.south) -- (o2);
      \draw (i3 |- cc.south) to node {\tiny $\cod$} (i3 |- iso1.north);
      \draw (i4) -- (i4 |- iso1.north);
      \draw (i5) -- (i5 |- iso1.north);
      \draw (i6) -- (i6 |- iso2.north);
      \draw (i8) -- (i8 |- iso2.north);
      \draw (i6 |- iso2.south) -- (o6);
      \draw (i7 |- iso2.south) -- (o7);
      \draw (i8 |- iso2.south) -- (o8);
    \end{pic}
    = \text{ RHS.}
  \end{align*}

  For (ii), we use induction on $n$.
  The base case, $n=0$, has already been covered, so we assume $n>0$.
  Now using Definitions~\ref{def:psin} and~\ref{def:Psi}, and Lemma~\ref{lem:d1d0},
  and induction in the fourth equation, we have --
  \begin{align*}
    \text{LHS } & =
    \begin{pic}[auto]
      \node (i0) at (0,0) {$\funT_\bullet$};
      \node (i1) at (1,0) {};
      \node (i2) at (2,0) {$\funK_{n+1}$};
      \node[2cell,minimum width=2.1cm] (alpha) at (1,-0.7) {$\psin{n+1}$};
      \node[2cell,minimum width=1.1cm] (d1) at (0.5,-1.5) {$d_1$};
      \node[2cell,minimum width=0cm] (d0) at (0,-2.3) {$\dzeroK$};
      \node[2cell,minimum width=1.1cm] (psi) at (1.5,-2.3) {$\Psi_\bullet$};
      \node (o0) at (0,-3) {$\funK_{n-1}$};
      \node (o1) at (1,-3) {$\funT_\bullet$};
      \node (o2) at (2,-3) {$\monL_\bullet$};
      \draw (i0) -- (i0 |- alpha.north);
      \draw (i2) -- (i2 |- alpha.north);
      \draw (i0 |- alpha.south) to node {\tiny $\funK_{n+1}$} (i0 |- d1.north);
      \draw (i2 |- alpha.south) to node {\tiny $\funT_\bullet$} (i2 |- psi.north);
      \draw (i0 |- d1.south) to node {\tiny $\funK_n$} (i0 |- d0.north);
      \draw (i1 |- d1.south) to node {\tiny $\monL_\bullet$} (i1 |- psi.north);
      \draw (o0 |- d0.south) -- (o0);
      \draw (o1 |- psi.south) -- (o1);
      \draw (o2 |- psi.south) -- (o2);
    \end{pic}
    =
    \begin{pic}[auto]
      \node (i0) at (0,0) {$\funT_\bullet$};
      \node (i1) at (1,0) {};
      \node (i2) at (2,0) {$\funK_{n+1}$};
      \node[2cell,minimum width=2.1cm] (alpha) at (1,-0.7) {$\psin{n+1}$};
      \node[2cell,minimum width=0cm] (d0) at (0,-1.5) {$\dzeroK$};
      \node[2cell,minimum width=1.1cm] (d1) at (0.5,-2.3) {$d_1$};
      \node[2cell,minimum width=1.1cm] (psi) at (1.5,-3.1) {$\Psi_\bullet$};
      \node (o0) at (0,-3.8) {$\funK_{n-1}$};
      \node (o1) at (1,-3.8) {$\funT_\bullet$};
      \node (o2) at (2,-3.8) {$\monL_\bullet$};
      \draw (i0) -- (i0 |- alpha.north);
      \draw (i2) -- (i2 |- alpha.north);
      \draw (i0 |- alpha.south) to node {\tiny $\funK_{n+1}$} (i0 |- d0.north);
      \draw (i2 |- alpha.south) to node {\tiny $\funT_\bullet$} (i2 |- psi.north);
      \draw (i0 |- d0.south) to node {\tiny $\funK_n$} (i0 |- d1.north);
      \draw (i0 |- d1.south) -- (o0);
      \draw (i1 |- d1.south) to node {\tiny $\monL_\bullet$} (i1 |- psi.north);
      \draw (o1 |- psi.south) -- (o1);
      \draw (o2 |- psi.south) -- (o2);
    \end{pic}
    =
    \begin{pic}[auto]
      \node (i0) at (0,0) {$\funT_\bullet$};
      \node (i1) at (1,0) {};
      \node (i2) at (2,0) {$\funK_{n+1}$};
      \node[2cell,minimum width=0cm] (d0) at (2,-0.7) {$\dzeroK$};
      \node[2cell,minimum width=2.1cm] (alpha) at (1,-1.5) {$\psin{n}$};
      \node[2cell,minimum width=1.1cm] (d1) at (0.5,-2.3) {$d_1$};
      \node[2cell,minimum width=1.1cm] (psi) at (1.5,-3.1) {$\Psi_\bullet$};
      \node (o0) at (0,-3.8) {$\funK_{n-1}$};
      \node (o1) at (1,-3.8) {$\funT_\bullet$};
      \node (o2) at (2,-3.8) {$\monL_\bullet$};
      \draw (i0) -- (i0 |- alpha.north);
      \draw (i2) -- (i2 |- d0.north);
      \draw (i2 |- d0.south) to node {\tiny $\funK_n$} (i2 |- alpha.north);
      \draw (i0 |- alpha.south) to node {\tiny $\funK_{n}$} (i0 |- d1.north);
      \draw (i2 |- alpha.south) to node {\tiny $\funT_\bullet$} (i2 |- psi.north);
      \draw (i0 |- d1.south) -- (o0);
      \draw (i1 |- d1.south) to node {\tiny $\monL_\bullet$} (i1 |- psi.north);
      \draw (o1 |- psi.south) -- (o1);
      \draw (o2 |- psi.south) -- (o2);
    \end{pic}
    =
    \begin{pic}[auto]
      \node (i0) at (0,0) {$\funT_\bullet$};
      \node (i1) at (1,0) {};
      \node (i2) at (2,0) {$\funK_{n+1}$};
      \node[2cell,minimum width=0cm] (d0) at (2,-0.7) {$\dzeroK$};
      \node[2cell,minimum width=1.1cm] (d1) at (1.5,-1.5) {$d_1$};
      \node[2cell,minimum width=1.1cm] (alpha) at (0.5,-2.3) {$\psin{n-1}$};
      \node (o0) at (0,-3) {$\funK_{n-1}$};
      \node (o1) at (1,-3) {$\funT_\bullet$};
      \node (o2) at (2,-3) {$\monL_\bullet$};
      \draw (i0) -- (i0 |- alpha.north);
      \draw (i2) -- (i2 |- d0.north);
      \draw (i2 |- d0.south) to node {\tiny $\funK_{n}$} (i2 |- d1.north);
      \draw (i1 |- d1.south) to node {\tiny $\funK_{n-1}$} (i1 |- alpha.north);
      \draw (i2 |- d1.south) -- (o2);
      \draw (i0 |- alpha.south) -- (o0);
      \draw (i1 |- alpha.south) -- (o1);
    \end{pic}   \\
    & =
    \begin{pic}[auto]
      \node (i0) at (0,0) {$\funT_\bullet$};
      \node (i1) at (1,0) {};
      \node (i2) at (2,0) {$\funK_{n+1}$};
      \node[2cell,minimum width=1.1cm] (d1) at (1.5,-0.7) {$d_1$};
      \node[2cell,minimum width=0cm] (d0) at (1,-1.5) {$\dzeroK$};
      \node[2cell,minimum width=1.1cm] (alpha) at (0.5,-2.3) {$\psin{n-1}$};
      \node (o0) at (0,-3) {$\funK_{n-1}$};
      \node (o1) at (1,-3) {$\funT_\bullet$};
      \node (o2) at (2,-3) {$\monL_\bullet$};
      \draw (i0) -- (i0 |- alpha.north);
      \draw (i2) -- (i2 |- d1.north);
      \draw (i1 |- d1.south) to node {\tiny $\funK_{n}$} (i1 |- d0.north);
      \draw (i2 |- d1.south) -- (o2);
      \draw (i1 |- d0.south) to node {\tiny $\funK_{n-1}$} (i1 |- alpha.north);
      \draw (i0 |- alpha.south) -- (o0);
      \draw (i1 |- alpha.south) -- (o1);
    \end{pic}
    =
    \begin{pic}[auto]
      \node (i0) at (0,0) {$\funT_\bullet$};
      \node (i1) at (1,0) {};
      \node (i2) at (2,0) {$\funK_{n+1}$};
      \node[2cell,minimum width=1.1cm] (d1) at (1.5,-0.7) {$d_1$};
      \node[2cell,minimum width=1.1cm] (alpha) at (0.5,-1.5) {$\psin{n}$};
      \node[2cell,minimum width=0cm] (d0) at (0,-2.3) {$\dzeroK$};
      \node (o0) at (0,-3) {$\funK_{n-1}$};
      \node (o1) at (1,-3) {$\funT_\bullet$};
      \node (o2) at (2,-3) {$\monL_\bullet$};
      \draw (i0) -- (i0 |- alpha.north);
      \draw (i2) -- (i2 |- d1.north);
      \draw (i1 |- d1.south) to node {\tiny $\funK_{n}$} (i1 |- alpha.north);
      \draw (i2 |- d1.south) -- (o2);
      \draw (i0 |- alpha.south) to node {\tiny $\funK_{n}$} (i0 |- d0.north);
      \draw (i1 |- alpha.south) -- (o1);
      \draw (i0 |- d0.south) -- (o0);
    \end{pic}
    = \text{ RHS.}
  \end{align*}
\end{proof}
\begin{lemma}\label{lem:Psii}
  \[
    \begin{pic}[auto]
      \node (i1) at (1,0) {$\funT_\bullet$};
      \node[2cell,minimum width=0cm] (i) at (0,0) {$i$};
      \node[2cell,minimum width=1.1cm] (psi) at (0.5,-0.8) {$\Psi_\bullet$};
      \node (o0) at (0,-1.5) {$\funT_\bullet$};
      \node (o1) at (1,-1.5) {$\monL_\bullet$};
      \draw (i1) -- (i1 |- psi.north);
      \draw (o0 |- i.south) to node {\tiny $\monL_\bullet$} (o0 |- psi.north);
      \draw (o0 |- psi.south) -- (o0);
      \draw (o1 |- psi.south) -- (o1);
    \end{pic}
    =
    \begin{pic}[auto]
      \node(i0) at (0,0) {};
      \node (i1) at (1,0) {$\funT_\bullet$};
      \node[2cell,minimum width=0cm] (i) at (0,0) {$i$};
      \node[2cell,minimum width=0cm] (d1) at (0,-0.8) {$d_1$};
      \node[2cell,minimum width=1.1cm] (psi) at (0.5,-1.6) {$\Psi_\bullet$};
      \node (o0) at (0,-2.3) {$\funT_\bullet$};
      \node (o1) at (1,-2.3) {$\monL_\bullet$};
      \draw (i0 |- i.south) to node {\tiny $\funK_1$} (i0 |- d1.north);
      \draw (i1) -- (i1 |- psi.north);
      \draw (i0 |- d1.south) to node {\tiny $\monL_\bullet$} (i0 |- psi.north);
      \draw (o0 |- psi.south) -- (o0);
      \draw (o1 |- psi.south) -- (o1);
    \end{pic}
    =
    \begin{pic}[auto]
      \node(i0) at (0,0) {};
      \node[2cell,minimum width=0cm] (i) at (0,0) {$i$};
      \node (i1) at (1,0) {$\funT_\bullet$};
      \node[2cell,minimum width=1.1cm] (psi) at (0.5,-0.8) {$\psin{1}^{-1}$};
      \node[2cell,minimum width=0cm] (d1) at (1,-1.6) {$d_1$};
      \node (o0) at (0,-2.3) {$\funT_\bullet$};
      \node (o1) at (1,-2.3) {$\monL_\bullet$};
      \draw (i0 |- i.south) to node {\tiny $\funK_1$} (i0 |- psi.north);
      \draw (i1) -- (i1 |- psi.north);
      \draw (o0 |- psi.south) -- (o0);
      \draw (i1 |- psi.south) to node {\tiny $\funK_1$} (i1 |- d1.north);
      \draw (o1 |- d1.south) -- (o1);
    \end{pic}
    =
    \begin{pic}[auto]
      \node (i0) at (0,0) {$\funT_\bullet$};
      \node (i1) at (1,0) {};
      \node[2cell,minimum width=0cm] (i) at (1,0) {$i$};
      \node[2cell,minimum width=0cm] (d1) at (1,-0.8) {$d_1$};
      \node (o0) at (0,-1.5) {$\funT_\bullet$};
      \node (o1) at (1,-1.5) {$\monL_\bullet$};
      \draw (i0) -- (o0);
      \draw (i1 |- i.south) to node {\tiny $\funK_1$} (i1 |- d1.north);
      \draw (o1 |- d1.south) -- (o1);
    \end{pic}
    =
    \begin{pic}[auto]
      \node (i0) at (0,0) {$\funT_\bullet$};
      \node[2cell,minimum width=0cm] (i) at (1,0) {$i$};
      \node (o0) at (0,-1) {$\funT_\bullet$};
      \node (o1) at (1,-1) {$\monL_\bullet$};
      \draw (i0) -- (o0);
      \draw (o1 |- i.south) -- (o1);
    \end{pic}
  \]
\end{lemma}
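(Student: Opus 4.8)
The plan is to observe that the outer equality in the displayed chain---the first diagram equals the last---is precisely the unit axiom of the transition $\Psi_\bullet$, namely the first equation of~\eqref{eq:transitionTangle} specialized to $\monD_1 = \monD_2 = \monL_\bullet$, $T = \funT_\bullet$ and $\psi = \Psi_\bullet$, which reads $\Psi_\bullet\circ(i\funT_\bullet) = \funT_\bullet i$. Rather than verifying this in one stroke, I would walk along the four intermediate equalities already set out in the statement, justifying each by a single previously-established identity, so that the whole argument reduces to string-diagram bookkeeping with no genuinely new content.

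First I would pass from the first diagram to the second. Here the $\monL_\bullet$-unit $i\colon \Id_\arrC \to \monL_\bullet$ feeding into $\Psi_\bullet$ is refactored, using Lemma~\ref{lem:cid0d1}~(d) read right-to-left, as the $\funK_1$-unit $i\colon \Id_\arrC \to \funK_1$ followed by $d_1\colon \funK_1 \to \funK_0\monL_\bullet = \monL_\bullet$. Next, to get from the second diagram to the third, I would invoke Definition~\ref{def:Psi}, which characterizes $\Psi_\bullet$ by $\Psi_\bullet\circ(d_1\funT_\bullet)\circ\psin{1} = \funT_\bullet d_1$; since $\psin{1}$ is a $2$-natural isomorphism (the preceding proposition), this rearranges to $\Psi_\bullet\circ(d_1\funT_\bullet) = (\funT_\bullet d_1)\circ\psin{1}^{-1}$, which is exactly the substitution turning the $\Psi_\bullet$-after-$d_1$ portion into $\funT_\bullet d_1$ following $\psin{1}^{-1}$.

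For the third-to-fourth step I would use Lemma~\ref{lem:psiic}~(a), which gives $i\funT_\bullet = \psin{1}\circ(\funT_\bullet i)$ and hence $\psin{1}^{-1}\circ(i\funT_\bullet) = \funT_\bullet i$; this collapses the composite of $\psin{1}^{-1}$ with the $\funK_1$-unit into the single whiskering $\funT_\bullet i$. Finally, the fourth-to-fifth equality is Lemma~\ref{lem:cid0d1}~(d) once more, now whiskered on the left by $\funT_\bullet$, recombining $\funT_\bullet(d_1\circ i)$ back into the $\monL_\bullet$-unit $\funT_\bullet i$. The only place an error is likely to creep in---and hence the point I would guard most carefully---is disambiguating the two roles played by the symbols $i$ and $d_1$: at the outer ends they are the unit of $\monL_\bullet$ (and its trivial data), whereas in the middle diagrams they are the $\funK_1$-unit and the transformation $d_1\colon\funK_{n+1}\to\funK_n\monL_\bullet$ of Definition~\ref{def:d1K}. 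I would therefore track the source and target $2$-functor of every strand at each stage, and in particular confirm that the two applications of Lemma~\ref{lem:cid0d1}~(d) are legitimate in their respective unwhiskered and $\funT_\bullet$-whiskered forms.
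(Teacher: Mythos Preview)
Your proposal is correct and follows exactly the same route as the paper: the first and last equalities by Lemma~\ref{lem:cid0d1}~(d), the second by Definition~\ref{def:Psi} (using invertibility of $\psin{1}$), and the third by Lemma~\ref{lem:psiic}~(a). Your added care in tracking the two meanings of $i$ and $d_1$ and in spelling out the whiskering makes the argument more explicit than the paper's terse version, but the substance is identical.
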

\begin{proof}
  The first and last equations use Lemma~\ref{lem:cid0d1},
  and the other two use the definition of $\Psi_\bullet$
  and Lemma~\ref{lem:psiic}.
\end{proof}

\begin{lemma}\label{lem:Psic}
  \[
    \begin{pic}[auto]
      \node (i0) at (0,0) {$\monL_\bullet$};
      \node (i1) at (1,0) {$\monL_\bullet$};
      \node (i2) at (2,0) {$\funT_\bullet$};
      \node[2cell,minimum width=1.1cm] (c) at (0.5,-0.7) {$c$};
      \node[2cell,minimum width=1.1cm] (psi) at (1.5,-1.5) {$\Psi_\bullet$};
      \node (o1) at (1,-2.2) {$\funT_\bullet$};
      \node (o2) at (2,-2.2) {$\monL_\bullet$};
      \draw (i0) -- (i0 |- c.north);
      \draw (i1) -- (i1 |- c.north);
      \draw (i2) -- (i2 |- psi.north);
      \draw (i1 |- c.south) to node {\tiny $\monL_\bullet$} (i1 |- psi.north);
      \draw (o1 |- psi.south) -- (o1);
      \draw (o2 |- psi.south) -- (o2);
    \end{pic}
    =
    \begin{pic}[auto]
      \node (i0) at (0,0) {$\monL_\bullet$};
      \node (i1) at (1,0) {$\monL_\bullet$};
      \node (i2) at (2,0) {$\funT_\bullet$};
      \node[2cell,minimum width=1.1cm] (psi1) at (1.5,-0.7) {$\Psi_\bullet$};
      \node[2cell,minimum width=1.1cm] (psi2) at (0.5,-1.5) {$\Psi_\bullet$};
      \node[2cell,minimum width=1.1cm] (c) at (1.5,-2.3) {$c$};
      \node (o0) at (0,-3) {$\funT_\bullet$};
      \node (o1) at (1,-3) {$\monL_\bullet$};
      \draw (i0) -- (i0 |- psi2.north);
      \draw (i1) -- (i1 |- psi1.north);
      \draw (i2) -- (i2 |- psi1.north);
      \draw (i1 |- psi1.south) to node {\tiny $\funT_\bullet$} (i1 |- psi2.north);
      \draw (i2 |- psi1.south) to node {\tiny $\monL_\bullet$} (i2 |- c.north);
      \draw (i1 |- psi2.south) to node {\tiny $\monL_\bullet$} (i1 |- c.north);
      \draw (o0 |- psi2.south) -- (o0);
      \draw (o1 |- c.south) -- (o1);
    \end{pic}
  \]
\end{lemma}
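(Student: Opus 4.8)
Lemma~\ref{lem:Psic} is precisely the multiplication axiom needed (together with Lemma~\ref{lem:Psii}) for $\Psi_\bullet$ to be a 2-transition from $\monL_\bullet$ to itself along $\funT_\bullet$: it is the second equation of~\eqref{eq:transitionTangle} with $T=\funT_\bullet$ and both monads equal to $\monL_\bullet$. Both sides are 2-natural transformations $\monL_\bullet\monL_\bullet\funT_\bullet\to\funT_\bullet\monL_\bullet$, so the plan is to prove them equal by the same supine technique that defines $\Psi_\bullet$. By Lemma~\ref{lem:d1Fsupine} each whiskered $d_1$ is supine over $\cod$; since supine morphisms compose and every component of $\psin{2}$ is invertible (hence supine), the composite
\[
  \funT_\bullet\funK_2 \xrightarrow{\;\psin{2}\;} \funK_2\funT_\bullet
    \xrightarrow{\;d_1\funT_\bullet\;} \funK_1\monL_\bullet\funT_\bullet
    \xrightarrow{\;d_1\monL_\bullet\funT_\bullet\;} \monL_\bullet\monL_\bullet\funT_\bullet
\]
is supine. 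By the uniqueness in the universal property of supine morphisms, two 2-natural transformations out of $\monL_\bullet\monL_\bullet\funT_\bullet$ coincide as soon as they agree (i) after left-composition with $\cod$ and (ii) after precomposition with this supine. Carrying the leading $\psin{2}$ along is what aligns the strand configuration so that Lemma~\ref{lem:psiPsi} applies directly.

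Check (i) is routine: since $\cod\Psi_\bullet$ and each $\cod\psin{n}$ are identities and $\cod\funT_\bullet=\cod$, both sides reduce downstairs to the multiplication $\cod c$, irrespective of where the $\Psi_\bullet$'s sit.

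For (ii) I would reduce both sides to the common normal form $\funT_\bullet(d_1\circ c)$, the image under $\funT_\bullet$ of $\funK_2\xrightarrow{c}\funK_1\xrightarrow{d_1}\monL_\bullet$, where $c\colon\funK_2\to\funK_1$ is the $\funK$-level multiplication. On the left-hand side, the outer multiplication $c\colon\monL_\bullet^2\to\monL_\bullet$ meets the two copies of $d_1$, and Lemma~\ref{lem:cid0d1}(b) rewrites $c\circ(d_1\,d_1)$ as $d_1\circ c$; Definition~\ref{def:Psi} then replaces $\Psi_\bullet\circ(d_1\funT_\bullet)$ by $\funT_\bullet d_1\circ\psin{1}^{-1}$, and the remaining factor $\psin{1}^{-1}\circ(c\funT_\bullet)\circ\psin{2}$ collapses to $\funT_\bullet c$ by Lemma~\ref{lem:psiic}(b), so the left-hand side becomes $\funT_\bullet(d_1\circ c)$. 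On the right-hand side one first uses the interchange law to reorder the operations so that each $\Psi_\bullet$ is composed directly with the $d_1$ that splits off its $\monL_\bullet$-argument (the inner $\Psi_\bullet$ with $d_1\colon\funK_2\to\funK_1\monL_\bullet$, the outer with $d_1\colon\funK_1\to\monL_\bullet$). Lemma~\ref{lem:psiPsi} at $n=1$ turns the inner pair into $(\psin{1}\monL_\bullet)\circ(\funT_\bullet d_1)$, while Definition~\ref{def:Psi} turns the outer pair into $(\funT_\bullet d_1\monL_\bullet)\circ(\psin{1}^{-1}\monL_\bullet)$; the two adjacent factors $\psin{1}^{-1}\monL_\bullet$ and $\psin{1}\monL_\bullet$ cancel, leaving $\funT_\bullet(d_1\,d_1)$, and a final application of Lemma~\ref{lem:cid0d1}(b) inside $\funT_\bullet$ (absorbing the outer $\funT_\bullet c$) yields the same $\funT_\bullet(d_1\circ c)$.

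The main obstacle is the right-hand bookkeeping in (ii): one must track which of the two $d_1$'s splits off the $\monL_\bullet$ seen by each $\Psi_\bullet$, so that the interchange moves are legitimate and Lemma~\ref{lem:psiPsi} is invoked at the correct index ($n=1$ for the inner $\Psi_\bullet$, $n=0$ for the outer), and one must see that the $\psin{1}$ and $\psin{1}^{-1}$ produced by the two invocations are adjacent. Once the strand discipline is fixed, the identity $(\psin{1}^{-1}\monL_\bullet)\circ(\psin{1}\monL_\bullet)=1$ is the crux that makes both sides meet at $\funT_\bullet(d_1\circ c)$, completing the supine check and hence the proof.
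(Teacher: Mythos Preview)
Your proof is correct and essentially matches the paper's: both verify the equation by checking it over $\cod$ (trivial) and then upstairs, invoking precisely Lemma~\ref{lem:cid0d1}(b), Definition~\ref{def:Psi}, Lemma~\ref{lem:psiic}(b), and Lemma~\ref{lem:psiPsi}. The only cosmetic differences are that the paper phrases the upstairs check as ``compose with $\dom$'' (using that $\dom d_1$ is an identity) rather than ``precompose with the supine $d_1d_1\psin{2}$'', and that the paper chains LHS to RHS directly while you reduce both sides to the common form $\funT_\bullet(d_1\circ c)$.
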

\begin{proof}
  We must prove the equation when composed with $\dom$ and with $\cod$ on the left.
  For $\cod$ this is immediate, since everything is over $\cod$.
  For $\dom$, after composing top and bottom with appropriate identity 2-cells,
  we calculate as follows.
  Here, equations (1) and (7) use the fact that $\dom d_1$ is an identity;
  equations (2) and (6) use Definition~\ref{def:Psi};
  and equations (3)-(5) use Lemmas~\ref{lem:psiic},~\ref{lem:cid0d1}~(b)
  and~\ref{lem:psiPsi} respectively.

  \begin{align*}
    \text{LHS } & \stackrel{(1)}{=}
    \begin{pic}[auto]
      \node (i0) at (0,0) {$\dom$};
      \node (i1) at (1,0) {$\funK_2$};
      \node (i2) at (2,0) {$\funT_\bullet$};
      \node[2cell,minimum width=0cm] (c) at (1,-0.7) {$c$};
      \node[2cell,minimum width=0cm] (d1) at (1,-1.5) {$d_1$};
      \node[2cell,minimum width=1.1cm] (psi) at (1.5,-2.3) {$\Psi_\bullet$};
      \node (o0) at (0,-3) {$\dom$};
      \node (o1) at (1,-3) {$\funT_\bullet$};
      \node (o2) at (2,-3) {$\monL_\bullet$};
      \draw (i0) -- (o0);
      \draw (i1) -- (i1 |- c.north);
      \draw (i2) -- (i2 |- psi.north);
      \draw (i1 |- c.south) to node {\tiny $\funK_1$} (i1 |- d1.north);
      \draw (i1 |- d1.south) to node {\tiny $\monL_\bullet$} (i1 |- psi.north);
      \draw (o1 |- psi.south) -- (o1);
      \draw (o2 |- psi.south) -- (o2);
    \end{pic}
    \stackrel{(2)}{=}
    \begin{pic}[auto]
      \node (i0) at (0,0) {$\dom$};
      \node (i1) at (1,0) {$\funK_2$};
      \node (i2) at (2,0) {$\funT_\bullet$};
      \node[2cell,minimum width=0cm] (c) at (1,-0.7) {$c$};
      \node[2cell,minimum width=1.1cm] (alpha) at (1.5,-1.5) {$\psin{1}^{-1}$};
      \node[2cell,minimum width=0cm] (d1) at (2,-2.3) {$d_1$};
      \node (o0) at (0,-3) {$\dom$};
      \node (o1) at (1,-3) {$\funT_\bullet$};
      \node (o2) at (2,-3) {$\monL_\bullet$};
      \draw (i0) -- (o0);
      \draw (i1) -- (i1 |- c.north);
      \draw (i2) -- (i2 |- alpha.north);
      \draw (i1 |- c.south) to node {\tiny $\funK_1$} (i1 |- alpha.north);
      \draw (o1 |- alpha.south) -- (o1);
      \draw (i2 |- alpha.south) to node {\tiny $\funK_1$} (i2 |- d1.north);
      \draw (o2 |- d1.south) -- (o2);
    \end{pic}
    \stackrel{(3)}{=}
    \begin{pic}[auto]
      \node (i0) at (0,0) {$\dom$};
      \node (i1) at (1,0) {$\funK_2$};
      \node (i2) at (2,0) {$\funT_\bullet$};
      \node[2cell,minimum width=1.1cm] (alpha) at (1.5,-0.7) {$\psin{2}^{-1}$};
      \node[2cell,minimum width=0cm] (c) at (2,-1.5) {$c$};
      \node[2cell,minimum width=0cm] (d1) at (2,-2.3) {$d_1$};
      \node (o0) at (0,-3) {$\dom$};
      \node (o1) at (1,-3) {$\funT_\bullet$};
      \node (o2) at (2,-3) {$\monL_\bullet$};
      \draw (i0) -- (o0);
      \draw (i1) -- (i1 |- alpha.north);
      \draw (i2) -- (i2 |- alpha.north);
      \draw (o1 |- alpha.south) -- (o1);
      \draw (i2 |- alpha.south) to node {\tiny $\funK_2$} (i2 |- c.north);
      \draw (i2 |- c.south) to node {\tiny $\funK_1$} (i2 |- d1.north);
      \draw (o2 |- d1.south) -- (o2);
    \end{pic}
    \stackrel{(4)}{=}
    \begin{pic}[auto]
      \node (i0) at (0,0) {$\dom$};
      \node (i1) at (1,0) {$\funK_2$};
      \node (i2) at (2,0) {$\funT_\bullet$};
      \node (i3) at (3,0) {};
      \node[2cell,minimum width=1.1cm] (alpha) at (1.5,-0.7) {$\psin{2}^{-1}$};
      \node[2cell,minimum width=1.1cm] (d11) at (2.5,-1.5) {$d_1$};
      \node[2cell,minimum width=0cm] (d12) at (2,-2.3) {$d_1$};
      \node[2cell,minimum width=1.1cm] (c) at (2.5,-3.1) {$c$};
      \node (o0) at (0,-3.8) {$\dom$};
      \node (o1) at (1,-3.8) {$\funT_\bullet$};
      \node (o2) at (2,-3.8) {$\monL_\bullet$};
      \draw (i0) -- (o0);
      \draw (i1) -- (i1 |- alpha.north);
      \draw (i2) -- (i2 |- alpha.north);
      \draw (o1 |- alpha.south) -- (o1);
      \draw (i2 |- alpha.south) to node {\tiny $\funK_2$} (i2 |- d11.north);
      \draw (i2 |- d11.south) to node {\tiny $\funK_1$} (i2 |- d12.north);
      \draw (i3 |- d11.south) to node {\tiny $\monL_\bullet$} (i3 |- c.north);
      \draw (i2 |- d12.south) to node {\tiny $\monL_\bullet$} (i2 |- c.north);
      \draw (o2 |- c.south) -- (o2);
    \end{pic}                              \\
    & \stackrel{(5)}{=}
    \begin{pic}[auto]
      \node (i0) at (0,0) {$\dom$};
      \node (i1) at (1,0) {$\funK_2$};
      \node (i2) at (2,0) {};
      \node (i3) at (3,0) {$\funT_\bullet$};
      \node[2cell,minimum width=1.1cm] (d11) at (1.5,-0.7) {$d_1$};
      \node[2cell,minimum width=1.1cm] (psi1) at (2.5,-1.5) {$\Psi_\bullet$};
      \node[2cell,minimum width=1.1cm] (alpha) at (1.5,-2.3) {$\psin{1}^{-1}$};
      \node[2cell,minimum width=0cm] (d12) at (2,-3.1) {$d_1$};
      \node[2cell,minimum width=1.1cm] (c) at (2.5,-3.9) {$c$};
      \node (o0) at (0,-4.6) {$\dom$};
      \node (o1) at (1,-4.6) {$\funT_\bullet$};
      \node (o2) at (2,-4.6) {$\monL_\bullet$};
      \draw (i0) -- (o0);
      \draw (i1) -- (i1 |- d11.north);
      \draw (i3) -- (i3 |- psi1.north);
      \draw (i1 |- d11.south) to node {\tiny $\funK_1$} (i1 |- alpha.north);
      \draw (i2 |- d11.south) to node {\tiny $\monL_\bullet$} (i2 |- psi1.north);
      \draw (i2 |- psi1.south) to node {\tiny $\funT_\bullet$} (i2 |- alpha.north);
      \draw (i3 |- psi1.south) to node {\tiny $\monL_\bullet$} (i3 |- c.north);
      \draw (o1 |- alpha.south) -- (o1);
      \draw (i2 |- alpha.south) to node {\tiny $\funK_1$} (i2 |- d12.north);
      \draw (i2 |- d12.south) to node {\tiny $\monL_\bullet$} (i2 |- c.north);
      \draw (o2 |- c.south) -- (o2);
    \end{pic}
    \stackrel{(6)}{=}
    \begin{pic}[auto]
      \node (i0) at (0,0) {$\dom$};
      \node (i1) at (1,0) {$\funK_2$};
      \node (i2) at (2,0) {};
      \node (i3) at (3,0) {$\funT_\bullet$};
      \node[2cell,minimum width=1.1cm] (d11) at (1.5,-0.7) {$d_1$};
      \node[2cell,minimum width=0cm] (d12) at (1,-1.5) {$d_1$};
      \node[2cell,minimum width=1.1cm] (psi1) at (2.5,-1.5) {$\Psi_\bullet$};
      \node[2cell,minimum width=1.1cm] (psi2) at (1.5,-2.3) {$\Psi_\bullet$};
      \node[2cell,minimum width=1.1cm] (c) at (2.5,-3.1) {$c$};
      \node (o0) at (0,-3.8) {$\dom$};
      \node (o1) at (1,-3.8) {$\funT_\bullet$};
      \node (o2) at (2,-3.8) {$\monL_\bullet$};
      \draw (i0) -- (o0);
      \draw (i1) -- (i1 |- d11.north);
      \draw (i3) -- (i3 |- psi1.north);
      \draw (i1 |- d11.south) to node {\tiny $\funK_1$} (i1 |- d12.north);
      \draw (i2 |- d11.south) to node {\tiny $\monL_\bullet$} (i2 |- psi1.north);
      \draw (i1 |- d12.south) to node {\tiny $\monL_\bullet$} (i1 |- psi2.north);
      \draw (i2 |- psi1.south) to node {\tiny $\funT_\bullet$} (i2 |- psi2.north);
      \draw (i3 |- psi1.south) to node {\tiny $\monL_\bullet$} (i3 |- c.north);
      \draw (o1 |- psi2.south) -- (o1);
      \draw (i2 |- psi2.south) to node {\tiny $\monL_\bullet$} (i2 |- c.north);
      \draw (o2 |- c.south) -- (o2);
    \end{pic}
    \stackrel{(7)}{=} \text{ RHS.}
  \end{align*}
\end{proof}

\begin{proposition}\label{prop:2mon2func}
  Let $\funT_\bullet$ be an indexed bundle 2-endofunctor for $\catC$,
  and let $\monL_\bullet$ be the monad from Definition~\ref{def:lbullet}.
  Then $\Psi_\bullet$ is a 2-transition from
  $(\arrC, \monL_\bullet)$ to itself along $\funT_\bullet$.
\end{proposition}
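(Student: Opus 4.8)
The plan is to observe that, in this self-transition case (with $\mathfrak{D}_1 = \mathfrak{D}_2 = \monL_\bullet$ and $T = \funT_\bullet$), the two conditions defining a 2-transition in Definition~\ref{def:transition} have already been verified piecemeal in the preceding lemmas, so the proposition reduces to reading them off. By Definition~\ref{def:transition}, what must be checked is that $\Psi_\bullet$ is a 2-natural transformation $\monL_\bullet\funT_\bullet \to \funT_\bullet\monL_\bullet$ satisfying the two tangle equations~\eqref{eq:transitionTangle}, where both monad structures are those of $\monL_\bullet$.

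First I would note that $\Psi_\bullet$ is a 2-natural transformation of exactly the required type: this is built into its construction in Definition~\ref{def:Psi}, where supineness of $d_1\funT_\bullet$ over $\cod$ (Lemma~\ref{lem:d1Fsupine}) together with invertibility of $\psin{1}$ pins down $\Psi_\bullet$ uniquely as a 2-natural transformation. So no separate naturality argument is needed.

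Next, for the unit equation -- the first of~\eqref{eq:transitionTangle}, which in our setting reads $\Psi_\bullet \cdot (i\funT_\bullet) = \funT_\bullet i$ -- I would invoke Lemma~\ref{lem:Psii}. Its outermost two diagrams are precisely the two sides of this equation: the leftmost is $\Psi_\bullet$ precomposed with the unit $i$ of $\monL_\bullet$ whiskered by $\funT_\bullet$ on the right, and the rightmost is the unit $i$ whiskered after $\funT_\bullet$. For the multiplication equation -- the second of~\eqref{eq:transitionTangle}, namely $\Psi_\bullet \cdot (c\funT_\bullet) = (\funT_\bullet c)\cdot(\Psi_\bullet\monL_\bullet)\cdot(\monL_\bullet\Psi_\bullet)$ -- I would invoke Lemma~\ref{lem:Psic}, whose left- and right-hand diagrams are exactly these two composites (the right-hand side applying $\Psi_\bullet$ to the inner $\monL_\bullet\funT_\bullet$, then again to the outer $\monL_\bullet(-)$, and finally multiplying the two resulting $\monL_\bullet$ strands with $c$). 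With both equations established, $\Psi_\bullet$ meets every requirement of Definition~\ref{def:transition}, and is therefore a 2-transition from $(\arrC,\monL_\bullet)$ to itself along $\funT_\bullet$.

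I expect no substantive obstacle at this final step: the real work lies upstream, in proving Lemmas~\ref{lem:Psii} and~\ref{lem:Psic} (the latter through its seven-step tangle computation, which in turn rests on invertibility of each $\psin{n}$ and on Lemma~\ref{lem:psiPsi}). The only care required here is bookkeeping -- confirming that the tangle encodings in those two lemmas match, composite for composite, the abstract forms of~\eqref{eq:transitionTangle} once one reads off which whiskerings of $i$, $c$, $\Psi_\bullet$ and $\monL_\bullet$ each strand carries.
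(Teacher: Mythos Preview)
Your proposal is correct and matches the paper's own proof essentially line for line: the paper also simply notes that 2-naturality of $\Psi_\bullet$ was established in its construction, and that the two transition equations are precisely Lemmas~\ref{lem:Psii} and~\ref{lem:Psic}. Your additional commentary about the bookkeeping and the upstream dependencies is accurate but goes beyond what the paper records at this step.
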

\begin{proof}
We have already shown that $\Psi_\bullet$ is 2-natural,
and the two equations needed to make a 2-transition are those of
Lemmas~\ref{lem:Psii} and~\ref{lem:Psic}.
\end{proof}
\begin{proposition}\label{prop:psalglifting}
  The 2-functor $\funT_\bullet$ lifts to an endofunctor
  $\widehat{\funT}_\bullet$ on the category $(\arrC)^{\monL_\bullet}$
  of $\monL_\bullet$-pseudoalgebras $(E,c,\zeta,\theta)$.
  $\widehat{\funT}_\bullet$ preserves normality ($\zeta=1$),
  and also the property that $\downstairs{c}=1_B$.
\end{proposition}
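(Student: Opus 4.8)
The plan is to apply Proposition~\ref{prop:lifting} directly, taking both $2$-monads to be $\monL_\bullet$ on $\arrC$, the $2$-functor to be $\funT_\bullet$, and the $2$-transition to be $\Psi_\bullet$. Proposition~\ref{prop:2mon2func} has just established that $\Psi_\bullet$ is a $2$-transition from $\monL_\bullet$ to itself along $\funT_\bullet$, so parts (1) and (2) of Proposition~\ref{prop:lifting} immediately yield a lift $\widehat{\funT}_\bullet$ on $(\arrC)^{\monL_\bullet}$ that makes the square of~\eqref{eq:lifting} commute (so that its carrier on a pseudoalgebra over $E$ is $\funT_\bullet E$) and that preserves normality. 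Thus almost all of the statement is a formal consequence of the machinery already in place; the only genuinely new point is the preservation of the side condition $\downstairs{c} = 1_B$ that singles out the opfibration case via Proposition~\ref{prop:Lbulletpsalg}.

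To handle that, I would unwind the structure map produced by the lifting construction. Following the recipe recalled after diagram~\eqref{eq:transitionDia}, the lifted pseudoalgebra on $\funT_\bullet E$ has structure map the composite
\[
  \monL_\bullet \funT_\bullet E \xrightarrow{\Psi_\bullet E} \funT_\bullet \monL_\bullet E \xrightarrow{\funT_\bullet c} \funT_\bullet E \text{,}
\]
namely $(\funT_\bullet c)(\Psi_\bullet E)$, where $(c,\zeta,\theta)$ is the original structure. Since $\cod$ is a functor, its value on this composite is $\downstairs{\funT_\bullet c}$ after $\downstairs{\Psi_\bullet E}$. Now $\Psi_\bullet$ is defined over $\cod$ (Definition~\ref{def:Psi}), so $\downstairs{\Psi_\bullet E} = 1_B$; and because $\funT_\bullet$ is a bundle $2$-endofunctor (Definition~\ref{def:tbullet}, giving $\downstairs{\funT_\bullet f} = \downstairs{f}$) we have $\downstairs{\funT_\bullet c} = \downstairs{c}$. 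Hence $\downstairs{(\funT_\bullet c)(\Psi_\bullet E)} = \downstairs{c} = 1_B$ whenever the input satisfies $\downstairs{c} = 1_B$, which is exactly the condition to be preserved. Normality is already covered by Proposition~\ref{prop:lifting}~(2), so nothing further is needed there.

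I do not expect a serious obstacle: this proposition is an assembly step, and the real work has been absorbed into Lemmas~\ref{lem:Psii} and~\ref{lem:Psic} (the two $2$-transition equations) together with the $2$-naturality and invertibility of $\Psi_\bullet$. The only point to watch is the bookkeeping of which factors are trivial downstairs — in particular that $\Psi_\bullet$, living over $\cod$, contributes an identity downstairs — but this is precisely the behaviour the $\funK_n$ were engineered to supply, letting $\funT_\bullet$-proneness and the $\cod$-bifibration structure interact cleanly. With $\widehat{\funT}_\bullet$ thus constructed, combining it with Proposition~\ref{prop:Lbulletpsalg} (and its dual for $\monR_\bullet$) is what will drive the main theorem.
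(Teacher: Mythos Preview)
Your proposal is correct and follows essentially the same approach as the paper: invoke Proposition~\ref{prop:2mon2func} to feed into Proposition~\ref{prop:lifting} for the lift and normality, then use that both $\funT_\bullet$ and $\Psi_\bullet$ are over $\cod$ to preserve $\downstairs{c}=1_B$. Your version simply unpacks the last step more explicitly than the paper's one-line remark.
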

\begin{proof}
  Given Proposition~\ref{prop:2mon2func}, the pseudoalgebra lifting and
  preservation of normality now follow from Proposition~\ref{prop:lifting},
  while the final part follows from the fact that both $\funT_\bullet$
  and $\Psi_\bullet$ are over $\cod$.
\end{proof}
\begin{theorem}\label{thm:main}
  Let $\catC$ be a representable 2-category,
  and $\funT_\bullet$ an indexed bundle 2-endofunctor for $\catC$.
  Then $\funT_\bullet$ preserves pseudofibrations, fibrations, pseudo-opfibrations
  and opfibrations in $\catC$.
\end{theorem}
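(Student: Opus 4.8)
The plan is to read the theorem off directly from the two structural results already established: the pseudoalgebra characterization of (op)fibrations (Proposition~\ref{prop:Lbulletpsalg} together with its $\monR_\bullet$-dual) and the lifting of $\funT_\bullet$ to pseudoalgebras (Proposition~\ref{prop:psalglifting}). No further manipulation of the two-dimensional tangle calculus should be required; the argument is bookkeeping that threads an (op)fibration structure through the lifting and back out again.

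First I would treat the opfibration case in detail. Let $p\colon E \to B$ be an opfibration. By Proposition~\ref{prop:Lbulletpsalg} it carries, as an object of $\arrC$, a normalized $\monL_\bullet$-pseudoalgebra structure $(c,\zeta=1,\theta)$ with $\downstairs{c}=1_B$. Proposition~\ref{prop:psalglifting} then produces the lifted pseudoalgebra $\widehat{\funT}_\bullet(E,c,\zeta,\theta)$ and asserts that it is again normalized and again satisfies the downstairs condition $\downstairs{c}=1_B$; here the hypothesis $\cod\funT_\bullet=\cod$ guarantees that the carrier is $\funT_B E$ over the \emph{same} base $B$, so that ``$\downstairs{c}=1_B$'' refers to the same object $B$ throughout. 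Feeding this back through the converse direction of Proposition~\ref{prop:Lbulletpsalg} shows that $\funT_B p$ is an opfibration. Dropping the normalization clause everywhere gives the pseudo-opfibration case verbatim.

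For fibrations and pseudofibrations I would invoke duality rather than repeat the argument. The development of Sections~\ref{sec:lbullet}--\ref{sec:fibpres} was arranged so that the $\monR_\bullet$ theory is obtained by passing to $\catC^{co}$ (reversing 2-cells), and the dual of Proposition~\ref{prop:Lbulletpsalg} characterizes (pseudo)fibrations by $\monR_\bullet$-pseudoalgebra structure with $\downstairs{c}=1_B$. The one point worth checking is that $\funT_\bullet$ still satisfies the hypotheses of the dual theory: being an \emph{indexed bundle 2-endofunctor} is phrased entirely in terms of codomains and of pullback squares, neither of which refers to the orientation of 2-cells, so the condition is self-dual and $\funT_\bullet$ is equally an indexed bundle 2-endofunctor for $\catC^{co}$. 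Hence the dual of Proposition~\ref{prop:psalglifting} applies, and the fibration arguments run identically to the opfibration ones.

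The main obstacle is therefore not in this theorem at all but upstream: all the genuine content lives in the construction of the 2-transition $\Psi_\bullet$ (Proposition~\ref{prop:2mon2func}) and the consequent lifting. At the level of the theorem itself, the only substantive thing to verify is the self-duality of the hypotheses just noted, which ensures that the $\monR_\bullet$ half is legitimately reduced to the $\monL_\bullet$ half rather than merely asserted.
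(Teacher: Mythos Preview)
Your proposal is correct and matches the paper's own proof essentially line for line: combine Proposition~\ref{prop:Lbulletpsalg} with Proposition~\ref{prop:psalglifting} for the (pseudo-)opfibration case, then pass to $\catC^{co}$ for the (pseudo)fibration case. Your explicit remark that the indexed bundle 2-endofunctor hypotheses are self-dual is a nice clarification that the paper leaves implicit.
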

\begin{proof}
  For (pseudo-)opfibrations, we combine Proposition~\ref{prop:psalglifting} with
  Proposition~\ref{prop:Lbulletpsalg}.

  Working in $\catC^{co}$ shows that the same development holds for the
  monad $\monR_\bullet$. This gives the result for (pseudo)fibrations.
\end{proof}

\section{Conclusions}\label{sec:conc}
As mentioned in the introduction, the starting point for this work was rather specific.
Recent topos-theoretic approaches to quantum foundations can be understood \cite{FRV:Born}
as constructing ``spectral bundles'' in the category of locales.
There are two technically distinct approaches,
``presheaf'' and ``copresheaf'',
exemplified by \cite{DoeringIsham:WhatThingTTFP} and \cite{HeunenLandsmanSpitters:ToposAQT}
respectively.
In the presheaf approach, the spectral bundle is a local homeomorphism and hence an opfibration:
it has fibre maps covariant with respect to specialization in the base.
In the copresheaf approach, the spectral bundle is fibrewise compact regular
(i.e. it corresponds to a compact regular locale in the topos of sheaves over the base)
and hence a fibration:
it has contravariant fibre maps.

We should like to apply constructions to these bundles,
for example the valuation locale construction in order to gain access to probabilistic
features of quantum physics,
and it is natural to wish these constructions to work fibrewise on bundles.
This naturally calls for constructions on locales that are geometric in the sense of
being preserved under pullback, since fibres are pullbacks.

In general, geometric constructions will not preserve the bundle properties of
being a local homeomorphism or of fibrewise compact regularity.
However, it would still be useful to know that they preserve fibrations and opfibrations,
and that is what this paper proves.
Thus it seems that contextual physics
(in which everything is considered to be fibred over a base space of contexts)
would fall naturally into two kinds, opfibrational and fibrational,
both closed under geometric constructions.
The choice is determined by the choice for the spectral bundles,
opfibrational for the presheaf approach, fibrational for copresheaves.
(There is still debate over which is better.
Our own opinion is that Gelfand-Naimark duality suggests that the spectral bundle
should be fibrewise compact regular, thus leading to fibrations.)

Along the way the work took on other important ideas.
The first was that ``geometric'' just means indexed, in the sense of indexed categories,
and could alternatively be formulated using the codomain bifibration.
The paper \cite{TownsendVickers:strengthofpowerlocale} provides a way to guarantee
the coherence needed for this, applicable to standard examples of geometric
constructions on locales.

The new definition of geometric as indexed also becomes applicable to 2-categories much more general
than $\mathbf{Loc}$,
and this prompted a big generalization of the present work.
In the more general setting it seemed very natural to use Street's characterization
of fibrations and opfibrations in terms of pseudoalgebras,
and lifting functors to algebra categories.

Throughout we have used tangle diagrams as a 2-dimensional calculus for 2-categories
-- in fact, we have even been able to use them in the 3-category of 2-categories,
by separating out two levels of 2-dimensionality.
We have found them more conclusive than the usual diagrams.

We have proved our results for a rather particular choice of laxities.
We work in 2-categories, with 2-monads (hence all strict),
but pseudoalgebras -- to match Street's criterion.
Also, the geometric endofunctors $\funT_\bullet$ are 2-functors.
It may be that other combinations might be useful, or other indexed categories.

\section*{Acknowledgement}
The research reported in this paper was conducted as part of the project
``Applications of geometric logic to topos approaches to quantum theory'',
project EP/G046298/1 funded by the UK Engineering and Physical Sciences
Research Council.

We are grateful to Anders Kock and Peter Johnstone for discussions that
clarified the relation of geometricity with indexed categories;
and to Jean B\'enabou and Ross Street for clearing up our initial
misapprehension of the role of normality (pseudofibrations v. fibrations)
in \cite{Street}.

%
%
%
%

{\small
\bibliography{GCPFBiblio,MyBiblio}
\bibliographystyle{agsm}
\def\topsep{0pt}
\def\parsep{0pt plus 5pt minus 1pt}
\def\itemsep{-0.5ex}
}
\end{document}
\eof